\documentclass[12pt]{amsart}
\usepackage{svn}
\SVN $Revision: 164 $

\usepackage{amssymb,amsthm,amsmath,amsfonts}
\usepackage{graphicx}
\usepackage{subcaption}
\usepackage{epstopdf}
\usepackage{mathtools}
\usepackage{mathabx}
\usepackage{blkarray}
\usepackage{comment}

\usepackage{textcomp}

\usepackage{enumerate}
\usepackage{enumitem}
\usepackage{commath}
\usepackage{color}

\usepackage[colorlinks=true,urlcolor=black,anchorcolor=black,citecolor=black,linkcolor=black]{hyperref}
\usepackage[hypcap]{caption}
\usepackage[a4paper,margin=1in]{geometry}
\usepackage{tikz,pgfplots,pgfplotstable}
\pgfplotsset{compat=1.16}

\usetikzlibrary{arrows.meta}
\usetikzlibrary{shapes,patterns}
\definecolor{histcolor}{rgb}{0.97858, 0.678934, 0.157834}

\newcommand{\paczek}[3][white]{  \node[draw,ellipse,inner sep=1pt, fill=#1] at (#2){\phantom{x}$#3$\phantom{x}};}

\allowdisplaybreaks{}

\DeclareMathOperator{\id}{\operatorname{id}}

\newcommand{\IE}{E}

\DeclareMathOperator{\IntPart}{\mathit{Int}}

\DeclareFontFamily{U} {MnSymbolC}{}
\DeclareFontShape{U}{MnSymbolC}{m}{n}{
  <-6> MnSymbolC5
  <6-7> MnSymbolC6
  <7-8> MnSymbolC7
  <8-9> MnSymbolC8
  <9-10> MnSymbolC9
  <10-12> MnSymbolC10
  <12-> MnSymbolC12}{}
\DeclareFontShape{U}{MnSymbolC}{b}{n}{
  <-6> MnSymbolC-Bold5
  <6-7> MnSymbolC-Bold6
  <7-8> MnSymbolC-Bold7
  <8-9> MnSymbolC-Bold8
  <9-10> MnSymbolC-Bold9
  <10-12> MnSymbolC-Bold10
  <12-> MnSymbolC-Bold12}{}

\DeclareSymbolFont{MnSyC} {U} {MnSymbolC}{m}{n}

\DeclareMathSymbol{\oocirc}{\mathbin}{MnSyC}{99}

\newcommand{\bPsi}{\boldsymbol{\Psi{}}}

\def\C{{\mathbb C}}
\newcommand\IC{{\mathbb C}}

\newcommand\IZ{{\mathbb Z}}
\newcommand\IN{{\mathbb N}}

\newcommand\bub[1]{\mathring{#1}}

\def\<{\langle}
\def\>{\rangle}

\def\E{E}

\DeclareMathOperator{\NC}{\mathit{NC}} \DeclareMathOperator{\NCirr}{{\mathit{NC}^\textit{irr}}}

\DeclareMathOperator{\IB}{\mathrm{inn}} \DeclareMathOperator{\OB}{\mathrm{out}}

 \newcommand{\alg}[1]{\mathcal{#1}}

\renewcommand{\abs}[1]{\lvert#1\rvert}

\newcommand{\harpleftsign}{\scriptstyle\leftharpoonup}
\newcommand{\harpleft}[2]{\ifx\displaystyle#1\doalign{$\harpleftsign$}{#1#2}\fi
  \ifx\textstyle#1\doalign{$\harpleftsign$}{#1#2}\fi
  \ifx\scriptstyle#1\doalign{\scalebox{.6}[.9]{$\harpleftsign$}}{#1#2}\fi
  \ifx\scriptscriptstyle#1\doalign{\scalebox{.5}[.8]{$\harpleftsign$}}{#1#2}\fi
}

\newcommand{\harprightsign}{\scriptstyle\rightharpoonup}
\newcommand{\harpright}[2]{\ifx\displaystyle#1\doalign{$\harprightsign$}{#1#2}\fi
  \ifx\textstyle#1\doalign{$\harprightsign$}{#1#2}\fi
  \ifx\scriptstyle#1\doalign{\scalebox{.6}[.9]{$\harprightsign$}}{#1#2}\fi
  \ifx\scriptscriptstyle#1\doalign{\scalebox{.5}[.8]{$\harprightsign$}}{#1#2}\fi
}
\newcommand{\doalign}[2]{{\vbox{\offinterlineskip\ialign{\hfil##\hfil\cr#1\cr$#2$\cr}}}}

\usepackage{yhmath}

\newcommand{\hl}[1]{\mathpalette\harpleft{#1}}
\newcommand{\hr}[1]{\mathpalette\harpright{#1}}

\DeclareMathOperator{\rE}{\mathpalette\harpright{E}}
\DeclareMathOperator{\lE}{\mathpalette\harpleft{E}}

\makeatletter
 \newcommand*{\closeindex}[1]{_{\mkern-4.5mu#1}}

 \DeclareRobustCommand{\rdelta}{\mathpalette\harpright{\delta}\@ifnextchar_{\expandafter\closeindex\@gobble}{}}
 \DeclareRobustCommand{\ldelta}{\mathpalette\harpleft{\delta}\@ifnextchar_{\expandafter\closeindex\@gobble}{}}

 \DeclareRobustCommand{\rnabla}{\hr{\nabla}\@ifnextchar_{\expandafter\closeindex\@gobble}{}}
 \DeclareRobustCommand{\lnabla}{\hl{\nabla}\@ifnextchar_{\expandafter\closeindex\@gobble}{}}
 \makeatother

\makeatletter
\renewcommand{\tocsection}[3]{\indentlabel{\@ifnotempty{#2}{\bfseries\ignorespaces#1 #2\quad}}\bfseries#3}
\renewcommand{\tocsubsection}[3]{\indentlabel{\@ifnotempty{#2}{\ignorespaces#1 #2\quad}}#3}

\newcommand\@dotsep{4.5}
\def\@tocline#1#2#3#4#5#6#7{\relax
	\ifnum #1>\c@tocdepth \else
	\par \addpenalty\@secpenalty\addvspace{#2}\begingroup \hyphenpenalty\@M
	\@ifempty{#4}{\@tempdima\csname r@tocindent\number#1\endcsname\relax
	}{\@tempdima#4\relax
	}\parindent\z@ \leftskip#3\relax \advance\leftskip\@tempdima\relax
	\rightskip\@pnumwidth plus1em \parfillskip-\@pnumwidth
	#5\leavevmode\hskip-\@tempdima{#6}\nobreak
	\leaders\hbox{$\m@th\mkern \@dotsep mu\hbox{.}\mkern \@dotsep mu$}\hfill
	\nobreak
	\hbox to\@pnumwidth{\@tocpagenum{\ifnum#1=1\bfseries\fi#7}}\par \nobreak
	\endgroup
	\fi}
\AtBeginDocument{\expandafter\renewcommand\csname r@tocindent0\endcsname{0pt}
}
\def\l@subsection{\@tocline{2}{0pt}{2.5pc}{5pc}{}}
\makeatother

\newcommand{\bbeta}[1]{\beta^b_{#1}}

\newcommand{\bbetas}[2]{\beta^{b,#1}_{#2}}
\newcommand{\bubbetas}[2]{\bub{\beta}^{b,#1}_{#2}}

\newcommand{\fbeta}[1]{\beta^\delta_{#1}}

\newcommand{\fbetas}[2]{\beta^{\delta,#1}_{#2}}

\usepackage{xargs}                      \usepackage{xcolor}  \usepackage[colorinlistoftodos,textsize=tiny]{todonotes}
\usepackage{todonotes}
\newcommandx{\question}[2][1=]{\todo[linecolor=blue,backgroundcolor=blue!25,bordercolor=blue,#1]{#2}}
\newcommandx{\adrian}[2][1=]{\todo[linecolor=teal,backgroundcolor=teal!25,bordercolor=teal,#1]{#2}}
\newcommandx{\franz}[2][1=]{\todo[linecolor=teal,backgroundcolor=green!25,bordercolor=teal,#1]{#2}}
\newcommandx{\kamil}[2][1=]{\todo[linecolor=teal,backgroundcolor=purple!25,bordercolor=teal,#1]{#2}}
\newcommandx{\info}[2][1=]{\todo[caption={Short note},linecolor=violet,backgroundcolor=violet!25,bordercolor=violet,size=\tiny,#1]{#2}}
\newcommandx{\thiswillnotshow}[2][1=]{\todo[disable,#1]{#2}}

\newcommand{\be}{\begin{equation}}
\newcommand{\ee}{\end{equation}}
\newtheorem{theorem}{Theorem}[section]
\newtheorem{proposition}[theorem]{Proposition}
\newtheorem{corollary}[theorem]{Corollary}
\newtheorem{lemma}[theorem]{Lemma}
       
\theoremstyle{definition}
\newtheorem{notation}[theorem]{Notation}
\newtheorem{definition}[theorem]{Definition}

\newtheorem{remark}[theorem]{Remark}

\newtheorem{example}[theorem]{Example}
\newtheorem{remdef}[theorem]{Remark and Definition}
\newtheorem{remnot}[theorem]{Remark and Notation}

\title[Polynomials in $c$-free random variables]{Polynomials in $c$-free random variables with applications to free denoising}

\author[A. Celestino]{Adrián Celestino}
\address[A. Celestino]{Mathematisches Institut\\
	Universit\"at M\"unster\\
	Einsteinstr. 62\\
	48149 M\"unster, Germany\\
	ORCID: 0000-0003-3444-5000
}
\email{adrian.celestino@uni-muenster.de}

\author[F. Lehner]{Franz Lehner}
\address[F. Lehner]{Institut f\"ur Diskrete Mathematik\\
Technische Universit\"at Graz\\
Steyrergasse 30\\
A-8010 Graz, Austria\\
ORCID: 0000-0002-6902-5148
}
\email{lehner@math.tu-graz.ac.at}

\author[K. Szpojankowski]{Kamil Szpojankowski}
\address[K. Szpojankowski]{Institute of Mathematics of the Polish Academy of Sciences\\ ul. \'Sniadeckich 8\\ 00-656 Warszawa\\ Poland\\ and 
	Faculty of Mathematics and Information Science\\
	Warsaw University of Technology\\ Koszykowa 75\\ 00-662 Warszawa\\ Poland.\\
ORCID: 0000-0003-0926-1285
}
\email{kamil.szpojankowski@pw.edu.pl}

\thanks{Supported by the Austrian Federal Ministry of Education, Science and
  Research and the Polish Ministry of Science and Higher Education, grants
  N$^{\textrm{os}}$ PL 08/2016 and PL 06/2018.\\
  AC: Funded by the Deutsche Forschungsgemeinschaft (DFG, German Research Foundation) under Germany's Excellence Strategy EXC 2044/2 --390685587, Mathematics Münster: Dynamics--Geometry--Structure.\\
  FL: This research was funded in part by the Austrian Science Fund (FWF) grant
  I 6232-N BOOMER (WEAVE).\\
KSz: This research was funded in part by National Science Centre, Poland WEAVE-UNISONO grant BOOMER 2022/04/Y/ST1/00008.
}

\numberwithin{equation}{section}

\begin{document}
\date{Rev. \SVNRevision, \today}

\begin{abstract}
  We study distributions of polynomials in conditionally free ($c$-free) random
  variables, a notion of independence
  for
two-state noncommutative probability spaces $(\mathcal{A},\varphi,\psi)$ introduced by
  Bo\.zejko, Leinert and Speicher.
  To this end we establish recursive relations between the
  joint Boolean cumulants of $c$-free random variables,
  analogous to previously found recursions for Boolean cumulants of free random variables.
  The algebraic reformulation of these recursions on the free associative
  algebra provides an effective formal machinery for the computation
  of the moment  generating functions and thus  the   distributions
  of arbitrary self-adjoint polynomials in $c$-free random variables.  
   
  As an application of a recent observation, our approach can be used to determine conditional expectations
  of the form $\E\left[a|P(a,b)\right]$, where $P$ is a self-adjoint polynomial
  in free (in the sense of Voiculescu) random variables $a,b$.
  We illustrate this with an example where $P(a,b)=i[a,b]$.  
  
  Finally we define orthogonal projections
  that formally play the role of conditional expectations in the framework of
  $c$-freeness and share some properties with the conditional expectations of
  free variables. In particular they can be used to re-derive by purely
  algebraic methods the formula of Popa and Wang for the $\Sigma$-transform for
  the $c$-free multiplicative convolution.
\end{abstract}

\maketitle

\begin{flushright}
	\itshape
	Dedicated to professor Marek Bo\.zejko\\ on the occasion of his 80th birthday.
\end{flushright}
\tableofcontents{}

\section{Introduction}

The notion of \(c\)-freeness (or conditional freeness) was introduced by
Bo\.{z}ejko, Leinert, and Speicher in \cite
{BozejkoSpeicher:psi:1991,BozejkoLeinertSpeicher:1996:convolution}. It is a
noncommutative independence in the framework of a two-state noncommutative
probability space \((\alg{M}, \varphi, \psi)\). This concept generalizes
freeness in the following sense: random variables \(a,b \in \alg{M}\) are
\(c\)-free with respect to the pair \((\varphi,\psi)\), if they are free in the sense of
Voiculescu with respect to the second state \(\psi\) and satisfy
a factorization condition with respect to the first state $\varphi$.
For precise definitions of both types of independence see Section~\ref{sec:2} below.

The distribution of a self-adjoint element \(a\) in a two-state probability
space is described by a pair of measures \(\mu_a^{\varphi}\) and
\(\mu_a^{\psi}\), corresponding to the distributions with respect to
\(\varphi\) and \(\psi\), respectively. In this paper, we present a method for
determining the distribution of polynomials in \(c\)-free variables. More
precisely, for \(c\)-free variables \(a,b\) and a noncommutative polynomial \(P
\in \C\langle X,Y\rangle\), we provide a method for determining the (ordinary) moment
generating function of \(P(a,b)\) with respect to \(\varphi\) (see Theorem
\ref{thm:cfreeConvolution}). For simplicity, our presentation focuses on the
case of two variables, but this generalizes easily to the case of $n$
variables. Our main tools are the same as in
\cite{LehnerSzpojankowski:2023:freeint1}
where we studied polynomials in free variables:
\begin{itemize}
 \item
  derivations on the free algebra which translate combinatorial
  recursions for Boolean cumulants into effective algebraic identities
  between their generating functions;
 \item
  linearization, which allows us to express the resolvent
  $(1-zP(X,Y))^{-1}$ in the form $u^t (I-zAX-zBY)^{-1}v$ for vectors
  $u,v\in \IC^n$ and $n\times n$ matrices $A$ and $B$.
\end{itemize}
On the combinatorial side of this development we prove a new characterization
of $c$-freeness in terms of Boolean cumulants (see Theorem
\ref{thm:VNTPCfree}). It turns out to be very similar to the characterization
of freeness from \cite{FMNS2}, and involves the so-called Vertical-No-Repeat
property which identifies maximal elements among coloured noncrossing
partitions with respect to a partial order denoted by $\ll$, which arises
naturally in the context of free probability (see
\cite{BelinschiNicaBBPforKTuples}). 

Our motivation to study this problem comes from the recent developments in
\cite{FevrierNicaSzpojankowski:2024} where a surprising connection between
$c$-freeness and so called free denoising was established, which we briefly
describe below.
Let  $(\alg{M},\psi)$ be a tracial $W^*$-probability space, i.e.,
$\alg{M}$ is a von Neumann algebra with a faithful, normal trace
$\psi$.
Assume further that two random variables $a,b$ are free with respect to
$\psi$. We interpret the random variable $a$ as a ``signal'' of interest and
$b$ as a ``noise'', and we view $P(a,b)$ as a noisy version of $a$, where
$P\in\IC\langle X,Y \rangle$. We are interested in  the conditional expectation
$\E[a|P(a,b)]$, which is the best approximation of
$a$ by a function of $P(a,b)$ with respect to the $L^2$-norm with respect to $\psi$.
It was shown in \cite{FevrierNicaSzpojankowski:2024} that, under the assumptions
$a\geq 0$ and $\psi(a)=1$, if we define a new state $\varphi(c)=\psi(a c)$ then
$a,b$ are $c$-free with respect to the pair $(\varphi,\psi)$, and moreover we
have $\E[a|P(a,b)]=h(P(a,b))$ where
$h=d\mu_{P(a,b)}^\varphi/d\mu_{P(a,b)}^\psi$ is the Radon-Nikodym
derivative. Methods of determining distributions of polynomials in free
variables are known (see
\cite{BelinschiMaiSpeicherSubordination,LehnerSzpojankowski:2023:freeint1}),
and thus we fill this gap and provide a method to compute the Radon-Nikodym
derivative above, and consequently the conditional expectation.

In order to illustrate our results we present one example, which is fully
discussed in Section~\ref{sec:denoising}. Suppose $a$ is a standard
semicircular element and $b$ has distribution
$\tfrac{1}{2}\left(\delta_{-1}+\delta_1\right)$.
Let $c = i[a,b]$ be their commutator, then
for $s<1/2$ we have
\[
  E\bigl[(1-sa)^{-1} | c\bigr]=h(c)
\]
with 
\[
  h(t)=\frac{2(2+\tilde{\eta}_a^2(s))}{(1-s\tilde{\eta}_a^2(s))\left((2+\tilde{\eta}_a(s))^2-\tilde{\eta}_a^2(s)
      t^2\right)},
\]
where $\tilde{\eta}_a(s)=\frac{1-\sqrt{1-4 s^2}}{2 s}$ is the shifted Boolean transform of the standard semicircle distribution.

Expanding the above power series at $s=0$ we obtain
\begin{align*}
	\E[a^2|c]&=\frac{c^2+2}{4},\\
	\E[a^4|c]&=\frac{c^4+6c^2+12}{16}.
\end{align*}
The same formulas hold more generally for the anticommutator and in fact 
any element of the form $c=\theta ab+\bar{\theta}ba$, with $\theta\in\IC$
arbitrary such that $\abs{\theta}=1$.

In the framework of general $c$-free random variables in a two-state
$W^*$-noncommutative probability space $(\alg{M},\varphi,\psi)$ it is natural
to ask whether conditional expectations onto $c$-free subalgebras
with respect to $\varphi$ exist. This
question is non-trivial because in general $c$-freeness  is incompatible with
 a tracial state.
Indeed the examples in Section~\ref{sec:6} below show that in the $c$-free case
conditional expectations in the strict sense do not exist,
however the projection in $L^2(\alg{M})$ with respect to the inner product
\begin{equation*}
  \langle a,b\rangle  = \varphi(ab^*)
\end{equation*}
exists and serves as a formal analog of the conditional expectation.
We call this projection a \emph{right quasi-conditional expectation}, as it has only the right module property and is not positive. We denote it by $\rE_b^{\varphi}$, when we compute the projection on subalgebra generated by $b$. We show that this projection shares some properties with the conditional expectations with respect to $\varphi$. In particular for any polynomial $P$ in $c$-free variables $\rE_b^\varphi[P(a,b)]$ is a polynomial in $b$ of degree at most $\mathrm{deg}(P)$. Moreover we provide recursive formulas for $\rE_b[P(a,b)]$ similar to those obtained for conditional expectations for polynomials in free variables from \cite{LehnerSzpojankowski:2023:freeint1}. Using this machinery, we show that the right quasi-conditional expectation of the resolvent of a sum of $c$-free variables is again a resolvent, multiplied by an analytic function. This resembles the phenomenon of subordination for free additive convolution from \cite{Biane98}. In Section~\ref{sec:sigmatransform} we show that using the right quasi-conditional expectation one can derive a formula for the $\Sigma$-transform for $c$-free multiplicative convolution from \cite{PopaWang:2011:multiplicative}.

This paper is organized as follows. In Section~\ref{sec:2} we recall basic definitions and facts concerning freeness and $c$-freeness. In Section~\ref{sec:VNRP} we prove a characterization of $c$-freeness in terms of Boolean cumulants. In Section~\ref{sec:4} we provide a method of computing distributions of polynomials in $c$-free variables. In Section~\ref{sec:denoising} we present an application of our result to a free denoising problem. In Section~\ref{sec:6} we study right quasi-conditional expectations and present a recursion for them based on Boolean cumulants. In Section~\ref{sec:sigmatransform} we derive the $\Sigma$-transform for $c$-free multiplicative convolution using  right quasi-conditional expectations.

\section{Preliminaries}\label{sec:2}
\subsection{Noncommutative probability spaces}
A \emph{noncommutative probability space} is a pair
 $(\alg{M},\psi)$, where $\alg{M}$ is a unital $*$-algebra and
$\psi:\alg{M}\mapsto\IC$ is a positive unital linear functional, commonly
called a \emph{state}. When $\alg{M}$ is a von Neumann algebra and $\psi$ is a
faithful normal state, the pair  $(\alg{M},\psi)$ is  called  a \emph{$W^*$-probability space}.

A \emph{two-state noncommutative probability space} is a triple
$(\alg{M},\varphi,\psi)$ where $\alg{M}$ is equipped with
two states $\varphi$ and $\psi$.
We will assume throughout that the second state $\psi$ is tracial.

\begin{definition}\label{def:2.2}
  \begin{enumerate}[label=(\roman*)]
   \item []
   \item 
  A family of subalgebras $(\alg{A}_i)_{i\in I}$ of a noncommutative probability space $(\alg{M},\psi)$ is called \emph{free} or \emph{free independent} if
  \begin{equation*}
    \psi(u_1u_2\dotsm u_n)=  0
  \end{equation*}
  for any choice of $u_j\in \bigcup_i\alg{A}_i$ such that $\psi(u_j)=0$ and
  $u_j\in\alg{A}_{i_j}$ with $i_j\ne i_{j+1}$ for all $j\in\{1,2,\dots,n-1\}$.
   \item 
  A family of subalgebras $(\alg{A}_i)_{i\in I}$ of a two-state noncommutative probability space $(\alg{M},\varphi,\psi)$ is called
  \emph{conditionally free} or \emph{$c$-free} if
  \begin{equation*}
   \psi(u_1u_2\cdots u_n) = 0\qquad\mbox{and}\qquad \varphi(u_1u_2\dotsm u_n)=  \varphi(u_1)\varphi(u_2)\dotsm\varphi(u_n)
  \end{equation*}
  whenever $u_j\in \bigcup_i\alg{A}_i$ with $\psi(u_j)=0$ and
  $u_j\in\alg{A}_{i_j}$ with $i_j\ne i_{j+1}$ for all $j\in\{1,2,\dots,n-1\}$,
  i.e., the family is free with respect to $\psi$ and in addition satisfies the
  factorization property with respect to $\varphi$.
  \end{enumerate}
\end{definition}

\subsection{Cumulants}
To quote Fisher  \cite{Fisher:1929:moments},
 when it comes to the description of joint distributions of random variables,
\emph{the formulae are
much simplified by the use of cumulative
moment functions, or semi-invariants, in
place of the crude moments}.
This is even more true for non-commuting ones. In the present paper we will deal with three kinds of cumulants.

\begin{notation}
  \begin{enumerate}[label=(\roman*), leftmargin=2em]
   \item []
   \item We set $[n]\coloneqq\{1,\ldots,n\}$.
    A \emph{set partition of order $n$}
    is a collection     $\pi=\{V_1,\ldots,V_m\}$
    of non-empty pairwise disjoint subsets 
    $V_i\subseteq [n]$, called \emph{blocks},
    such that $\bigcup_{i=1}^m V_i = [n]$.
    Every set partition $\pi$ of order $n$ induces an equivalence relation
    on $[n]$, denoted by $\sim_\pi$,
    such that  $k\sim_\pi    l$ if and only if $k$ and $l$ are elements of the
    same block of $\pi$.
   \item Conversely, any function $f:[n]\to C$  into some set of ``colours''
    $C$ induces an equivalence relation $i\sim j:\iff{} f(i)=f(j)$.
    The corresponding set partition $\pi=\ker f$ is called
    the \emph{kernel} of $f$.
   \item We say that a set partition $\pi$ of $[n]$ is
    \emph{noncrossing} if there is no pair of  blocks $V,W\in \pi$ with
    elements $a,c\in V$ and $b,d\in W$ such that $a<b<c<d$. The collection of
    noncrossing partitions of $[n]$ is denoted $\NC(n)$.  
   \item An \emph{interval partition} of $[n]$ is a noncrossing partition
    $\pi\in \NC(n)$ such that every block is an  interval of $[n]$. The
    collection of interval partitions of $[n]$ is denoted by $\IntPart(n)$. 
   \item Let $\pi\in \NC(n)$ and $V,W\in \pi$ be two different blocks.
    We say  that a block
$W$ is \emph{nested} inside a block $V$
    if $\min(V)\leq w\leq  \max(V)$ for any $w\in
    W$. A block $W\in \pi$ is called an \emph{outer block} if there is no $V\in
    \pi$ such that $W$ is nested inside $V$. A block $W$ that is not outer is called an
    \emph{inner block}.  We denote the sets of inner and outer blocks of
    $\pi\in \NC(n)$ by $\IB(\pi)$ and $\OB(\pi)$, respectively. An interval
    partition contains no inner blocks. 
\item  For a family of multilinear functionals
    $(L_n:\alg{M}^n\to\mathbb{C})_{n\in\IN}$ and a partition $\pi$ we denote by  
    $L_\pi$ the partitioned multilinear functional
    $$
    L_\pi(a_1,a_2,\dots,a_n) = \prod_{V\in\pi} L_{V}( a_1,a_2,\ldots,a_n),
    $$
    where for a block $V = \{i_1<i_2<\cdots <i_s\}$
    we denote $ L_{V}( a_1,a_2,\ldots,a_n)    := L_s(a_{i_1},a_{i_2},\ldots,a_{i_s}).$
    This notation is extended to linear functionals by setting
     $\varphi_n(b_1,b_2,\ldots,b_n):=\varphi(b_1b_2\cdots b_n).$ 
   \item Both $\NC(n)$ and $\IntPart(n)$ are lattices with respect to
    the \emph{reversed refinement order}:
    two partitions $\pi$ and $\rho$ of $[n]$ satisfy the relation $\pi\leq\rho$ 
    for every block $V\in\pi$ there is a block $W\in\rho$ such that $V\subseteq W$. 
  \end{enumerate}
\end{notation}

\begin{definition}\label{def:cumulants}
  \begin{enumerate}[label=(\roman*)]
   \item []
   \item 
  Let $(\mathcal{M},\psi)$ be a noncommutative probability space. The following identities uniquely determine families of multilinear
  functionals $(\beta^\psi_n)_{n\in\IN}$,
  $(r^\psi_n)_{n\in\IN}$,
  called \emph{Boolean} and \emph{free} cumulants:
\begin{align}
\psi(a_1a_2\dotsm a_n)&=\sum_{\sigma \in \IntPart(n)}\beta^\psi_{\sigma}(a_1,a_2,\ldots,a_n),\label{eq:mcboolean}\\
\psi(a_1a_2\dotsm a_n)&=\sum_{\sigma \in  \NC(n)}r^\psi_{\sigma}(a_1,a_2,\ldots,a_n).\label{eq:mcfree}
\end{align}
\item 
	Let $(\mathcal{M},\varphi,\psi)$ be a two-state noncommutative probability space. The following identity uniquely determines the family of \emph{$c$-free cumulants}: $(r_{n}^{\varphi,\psi})_{n\in\mathbb{N}}$
\begin{align}
  \varphi(a_1a_2\cdots a_n)
  &=\sum_{\sigma\in \NC(n)}
    \prod_{
V\in\OB(\sigma)              
  }
  r_{V}^{\varphi,\psi}(a_1,\ldots,a_n)
  \prod_{
W\in\IB(\sigma)  
  }
  r_{W}^{\psi}(a_1,\ldots,a_n)
  .\label{eq:mccfree}
	\end{align}

  \end{enumerate}
\end{definition}
\begin{remark}
	Note that when dealing with a two-state noncommutative probability space $(\mathcal{M},\varphi,\psi)$ we will use Boolean cumulants $\beta^{\varphi}$ and $\beta^\psi$ with respect to both states $\varphi$ and $\psi$.
\end{remark}

\begin{proposition}
  \label{prop:vanishingmixedcumulants}
  \begin{enumerate}[label=(\roman*)]
   \item []
   \item Subalgebras are free if and only if mixed free cumulants vanish \cite{SpeicherNC}.
   \item Subalgebras are $c$-free if and only if mixed free cumulants and mixed
    $c$-free cumulants vanish \cite{BozejkoLeinertSpeicher:1996:convolution}.

  \end{enumerate}
\end{proposition}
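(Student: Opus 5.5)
Part (i) is Speicher's theorem and is only cited, so the plan concerns part (ii). Throughout I take \emph{mixed} cumulants to mean cumulants whose arguments come from at least two different subalgebras. I will argue by moment--cumulant inversion and induction on $n$. Since the paper assumes $\psi$ tracial, the $\psi$-half of $c$-freeness is exactly freeness with respect to $\psi$. By (i), this is equivalent to the vanishing of mixed free cumulants $r^\psi$. So it suffices to show the following: assuming freeness with respect to $\psi$, the $\varphi$-factorization property of Definition~\ref{def:2.2}(ii) holds if and only if mixed $c$-free cumulants $r^{\varphi,\psi}$ vanish.

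\emph{($\Leftarrow$)} Assume all mixed $r^\psi$ and mixed $r^{\varphi,\psi}$ vanish. Take $u_1,\dots,u_n$ alternating between subalgebras, with $\psi(u_j)=0$, and expand $\varphi(u_1\cdots u_n)$ by \eqref{eq:mccfree}. Only partitions $\sigma$ all of whose blocks are monochromatic survive. Now use the standard fact that a noncrossing partition of an alternating word with all blocks monochromatic must contain an interval block, and in fact some block that is a singleton. Such a singleton $\{j\}$ gives the factor $r^\psi_1(u_j)=\psi(u_j)=0$ when it is inner, and $r^{\varphi,\psi}_1(u_j)=\varphi(u_j)$ when it is outer. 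I will organize the sum by the outer blocks. Each outer block $V$ is monochromatic, and the gaps between consecutive elements of $V$ are filled by inner blocks. Summing over those inner configurations with the free moment--cumulant formula for $\psi$ gives products of $\psi$-moments of alternating centered words, which vanish unless the gap is empty. Hence outer blocks cannot have gaps, so they are singletons. The only surviving term is $\sigma=0_n$, which contributes $\prod_j\varphi(u_j)$, as required.

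\emph{($\Rightarrow$)} Assume $c$-freeness. I will show by induction on $n$ that mixed $r^{\varphi,\psi}_n$ vanish. The key step is that $\varphi$ restricted to the algebra generated by the $\alg{A}_i$ is determined by the restrictions $\varphi|_{\alg{A}_i}$ together with $\psi$. To see this, write each element as $u=\psi(u)1+\mathring u$ and expand. Any word decomposes into alternating centered products, whose $\varphi$-values factor by the $c$-freeness hypothesis, plus lower-order terms handled recursively. Now construct a model with the same data in which mixed $c$-free cumulants vanish, namely the conditionally free product of Bo\.zejko--Leinert--Speicher. By ($\Leftarrow$), that model is $c$-free. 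Since $\varphi$ is determined by the same data in both, the two moment functionals coincide. The $c$-free cumulants are determined by the moments via \eqref{eq:mccfree}, triangularly in $n$ given $r^\psi$, so they coincide too. Therefore mixed $r^{\varphi,\psi}$ vanish in the original space.

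The main obstacle is the gap-summation step in ($\Leftarrow$). One must check carefully that summing the inner blocks inside a gap of an outer block reproduces exactly $\psi$ of the alternating centered subword, using \eqref{eq:mcfree}. One must also handle the subtlety that a gap may start and end with letters from the same subalgebra as $V$'s colour. I would treat this by first conditioning on the outer blocks and then applying free independence of the centered inner word, as in the standard proof of (i).
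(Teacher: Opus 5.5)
Your ($\Leftarrow$) direction is correct. Once mixed blocks are discarded, the inner blocks in each gap of a monochromatic outer block sum to $\psi$ of a nonempty alternating centred word, which vanishes by (i), so only $\sigma=0_n$ survives. The subtlety you flag cannot occur: in an alternating word the neighbours of an element of $V$ never have the colour of $V$. Traciality of $\psi$ plays no role either, since the $\psi$-half of $c$-freeness is freeness by Definition~\ref{def:2.2}.

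The gap is in ($\Rightarrow$). The determination step is fine, but your comparison model does not stand independently of the claim. In the Bo\.zejko--Leinert--Speicher conditionally free product, $\varphi$ is \emph{defined} by the factorization rule on alternating centred words. The assertion that its mixed $c$-free cumulants vanish is exactly ($\Rightarrow$) for that particular $c$-free family. So your argument only reduces the general case to the universal one, and then assumes the universal one. If you meant instead a model whose $\varphi$ is defined by prescribing the cumulants (marginal ones on monochromatic tuples, zero on mixed tuples), you would have to prove that \eqref{eq:mccfree} then defines a consistent functional on the free product $\ast_i\alg{A}_i$. That means compatibility with units and with multiplying two adjacent entries from the same $\alg{A}_i$. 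This compatibility is the real content of the theorem, and the proposal never addresses it. The paper itself gives no proof here; it only quotes the literature.

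A non-circular proof of ($\Rightarrow$) runs by induction on $n$, as in the standard proof of (i).
(a) Show that $r^{\varphi,\psi}_n$ vanishes for $n\ge2$ whenever an entry equals $1$.
(b) For alternating $\psi$-centred $u_1,\dots,u_n$, expand $\prod_j\varphi(u_j)=\varphi(u_1\cdots u_n)$ by \eqref{eq:mccfree}. Every $\sigma\ne1_n$ with a mixed block drops out, by induction for outer blocks and by (i) for inner ones. Your ($\Leftarrow$) computation shows that the monochromatic $\sigma$ add up to $\prod_j\varphi(u_j)$, hence $r^{\varphi,\psi}_n(u_1,\dots,u_n)=0$. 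Step (a) then removes the centring.
(c) If two adjacent entries $a_k,a_{k+1}$ have the same colour, use the products-as-arguments formula
\[
  r^{\varphi,\psi}_{n-1}(a_1,\ldots,a_ka_{k+1},\ldots,a_n)
  =\sum_{\substack{\pi\in\NC(n)\\ \pi\vee\sigma=1_n}}\;
  \prod_{V\in\OB(\pi)}r^{\varphi,\psi}_V(a_1,\ldots,a_n)
  \prod_{W\in\IB(\pi)}r^{\psi}_W(a_1,\ldots,a_n),
\]
where $\sigma$ joins $k$ and $k+1$. The left side is a mixed cumulant of order $n-1$, so it vanishes by induction. Every $\pi\ne1_n$ on the right has exactly two blocks, one containing $k$ and one containing $k+1$. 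These cannot both be monochromatic for a mixed tuple, so each such term contains a vanishing mixed cumulant.

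This formula is the missing ingredient: it is where multiplication inside the $\alg{A}_i$ is actually handled. It needs its own inductive proof. Compare the two expansions of $\varphi(a_1\cdots(a_ka_{k+1})\cdots a_n)$, using that joining the blocks of two adjacent points keeps a partition noncrossing and does not change which of the remaining blocks are outer.
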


A similar result holds for Boolean independence which however we do not
consider in this paper. Although one might conclude from this characterization
that every notion of independence is best studied in terms of its ``native''
variant of cumulants,  it turned out recently that Boolean cumulants are the
most universal ones and useful for different kinds of independences
\cite{JekelLiu:2019:operad}, 
in particular
free probability \cite{FMNS2,LehnerSzpojan,LehnerSzpojankowski:2023:freeint1},
but there are also applications in classical probability
\cite{SaulisStatulevicius:1991:limit,VanWerdeSanders:2023:matrix}.
In the present paper we show that they are  useful for
the study of $c$-freeness as well.

The following recursive reformulations of the Boolean moment-cumulant relations
turn out to be most useful for our study:
\begin{align}
  \label{eq:recurrenceboolcum}
  \varphi(a_1a_2\dotsm a_n)
  &= \sum_{k=1}^n
    \beta_k^\varphi(a_1,a_2,\dots,a_k)\,\varphi(a_{k+1}a_{k+2}\dotsm a_n),
    \\
  \label{eq:recurrenceboolcumpsi}
  \psi(a_1a_2\dotsm a_n)
  &= \sum_{k=1}^n
    \beta_k^\psi(a_1,a_2,\dots,a_k)\,\psi(a_{k+1}a_{k+2}\dotsm a_n).
\end{align}

\begin{remark}
The previous recursion has a natural translation into the language of
generating functions. More precisely, let $(\alg{M},\psi)$ be a noncommutative
probability space and let $a\in \alg{M}$. Then, the (ordinary) \emph{moment
  generating function} (or \emph{moment transform}) of a random variable $a$ with respect to $\psi$ is defined by
\[M_a^\psi(z) \coloneqq 1 + \sum_{n=1}^\infty \psi(a^n)z^n.\]
Similarly, we define the \emph{Boolean transform} and the \emph{shifted Boolean
  transform of a random variable $a$} as the generating function of its cumulants
\[
\eta_a^\psi(z) \coloneqq  \sum_{n=1}^\infty \beta_n^\psi(a)z^n,\qquad \tilde\eta^\psi_a(z) \coloneqq  \frac{1}{z}\eta_a^\psi(z) = \sum_{n=1}^\infty \beta_n^\psi(a)z^{n-1},
\]
respectively, where $\beta_n^\psi(a) := \beta_n^\psi(a,a,\ldots,a).$
It is not difficult to verify that the recurrence
\eqref{eq:recurrenceboolcumpsi} is equivalent to the functional equation
\[
M_a^\psi(z) = \frac{1}{1-\eta_a^\psi(z)}.
\]
\end{remark}

\subsection{Closure operators and Möbius inversion}
The following simple lemma subsumes most combinatorial proofs
of the present paper.
\begin{definition}
  Let $P$ be a poset. A map $c:P\to P$ is called \emph{closure operator}
 (or   \emph{sample operator}  \cite {BarnabeiBriniRota:1986:moebius})
  if the following conditions are satisfied:
  \begin{enumerate}[label=(\roman*)]
   \item it is \emph{increasing}, i.e., $x\leq c(x)$ for every $x\in P$; 
   \item it is \emph{order preserving}, i.e., if $x\leq y$ then $c(x)\leq c(y)$; 
   \item it is \emph{idempotent}, i.e., $c\circ c=c$.
  \end{enumerate}
  An element $x\in P$ is called \emph{closed} if $x=c(x)$.
  The set of closed elements is denoted by $\overline{P}$
  and inherits the order.
\end{definition}

\begin{lemma}\label{lem:closure}
  Let $(P,\leq)$ be a poset and $c:P\to P$ a closure operator
  and $(\overline{P},\leq)$ the subposet of closed elements with the induced order.
  Given a function $f:P\to \IC$,  define its partial sums
  $$
  F(x) = \sum_{\substack{y\in P\\ y\leq x}} f(y)
  .
  $$
  Assume that there is a function $g:\overline{P}\to \IC$
  such that for every $x\in\overline{P}$ we also have
  $F(x) = \sum_{ 
    \substack{y\in \overline{P}\\ y\leq x}} g(y)$.
  Then
  $$
  g(y) = \sum_{\substack{z\in P\\ c(z)=y}} f(z)
  $$
   for all $y\in\overline{P}$.
\end{lemma}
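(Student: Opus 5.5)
The plan is to regroup the sum defining $F$ according to the closure and then argue by uniqueness of the representation on the closed subposet. First observe that for a closed element $x\in\overline{P}$ and any $y\in P$ we have
\[
y\leq x \iff c(y)\leq x .
\]
Indeed, if $y\leq x$ then monotonicity and closedness give $c(y)\leq c(x)=x$. Conversely, if $c(y)\leq x$ then $y\leq c(y)\leq x$, because $c$ is increasing.

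With this equivalence we can rewrite the partial sums at a closed element $x$ by grouping the terms of $F(x)$ according to their closure:
\[
F(x)=\sum_{\substack{y\in P\\ y\leq x}} f(y)=\sum_{\substack{w\in\overline{P}\\ w\leq x}}\ \sum_{\substack{z\in P\\ c(z)=w}} f(z).
\]
Here we use that $c(z)$ is always closed, by idempotence. Setting $\tilde g(w)=\sum_{c(z)=w} f(z)$, we get $F(x)=\sum_{w\in\overline{P},\,w\leq x}\tilde g(w)$ for every $x\in\overline{P}$.

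It remains to show that $g=\tilde g$. Both functions satisfy the same system of partial-sum identities on the poset $\overline{P}$, so their difference $h=g-\tilde g$ satisfies
\[
\sum_{\substack{w\in\overline{P}\\ w\leq x}} h(w)=0 \quad\text{for all } x\in\overline{P}.
\]
We conclude by Möbius inversion on $\overline{P}$, or equivalently by induction over $\overline{P}$. For a minimal $x$ the identity gives $h(x)=0$ directly. For general $x$, the inductive hypothesis kills every term with $w<x$, leaving $h(x)=0$.

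The only subtle point is that this induction needs well-foundedness. Implicitly the poset is finite, or at least locally finite with finite lower sets, which is also what makes the sums $F(x)$ meaningful. In the intended applications $P$ is a finite set of partitions, so this hypothesis holds and I would state it explicitly.
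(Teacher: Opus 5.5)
Your proposal is correct and follows the same route as the paper. The paper's proof regroups $F(x)$ over the closed elements $w\le x$ according to the value of $c$, then applies M\"obius inversion on $\overline{P}$. You carry out exactly these steps and also spell out two points the paper leaves implicit: the equivalence $y\le x \iff c(y)\le x$ for closed $x$, and the finiteness (well-foundedness) hypothesis needed for the inversion.
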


\begin{proof}
   For every $x\in \overline{P}$ we can write
  $$
  F(x) 
  = \sum_{z\leq x} f(z)
  = \sum_{\substack{y\in\overline{P}\\ y\leq x}} \sum_{\substack{z\in P\\ c(z)=y}}f(z)
  $$
  and apply M\"obius inversion on $\overline{P}$.
    
  \end{proof}

\subsection{Cumulants in terms of cumulants}
\begin{definition}
  \label{def:intervalclosure}
  The \emph{interval closure} $\overline{\pi}$ of a noncrossing partition $\pi$
  is the smallest interval partition $\rho$ such that $\pi\leq\rho$.
  Its blocks are  the convex hulls of the outer blocks of $\pi$.
  A noncrossing partition $\pi\in \NC(n)$ is called \emph{irreducible} if
  $\overline{\pi}=\hat{1}_n$, i.e., it has a unique outer block or equivalently,
  $1\sim_\pi n$.
  The restrictions of $\pi$ to the blocks of $\overline{\pi}$
  are called its \emph{irreducible components} and
  it follows that $\pi$ is the concatenation of its irreducible components.
  We denote the set of irreducible noncrossing partitions by
  $\NCirr(n)$.
\end{definition}

Irreducible partitions are a natural means to express mutual relations between
different kinds of cumulants
\cite {Lehner:2002:connected,ArizmendiHasebeLehnerVargas:2015:relations},
in particular:

\begin{proposition}
  \begin{enumerate}[label=(\roman*)]
   \item []
    
   \item 
    Let $(\alg{M},\psi)$ be a noncommutative probability space and
    $(r^\psi_n)_{n\geq1},(\beta_n^\psi)_{n\geq1}$ be the sequences of free and
    Boolean cumulant functionals, respectively. Then
    \begin{equation}
      \label{eq:beta=sumNCirrpi}
      \beta_n^{\psi}(a_1,a_2,\dots,a_n) = \sum_{\pi\in\NCirr(n)}    r_\pi^{\psi}(a_1,a_2,\dots,a_n)
    \end{equation}
    for any $n\geq1$ and $a_1,\ldots,a_n\in \alg{M}$.
   \item 
    Let $(\alg{M},\varphi,\psi)$ be a noncommutative probability space,
then for any $n\geq1$ and $a_1,\ldots,a_n\in \alg{M}$
    the Boolean cumulants can be expressed in terms of the $c$-free and free
    cumulants as follows:
    \begin{equation}
      \label{eq:betaphi=sumNCirrpi}
      \beta_n^{\varphi}(a_1,a_2,\dots,a_n) = \sum_{\pi\in\NCirr(n)}
      \prod_{V\in \OB(\pi)}
      r_V^{\varphi,\psi}(a_1,a_2,\dots,a_n)
      \prod_{W\in \IB(\pi)}
      r_W^{\psi}(a_1,a_2,\dots,a_n)
      .
    \end{equation}
    \begin{proof}
      We note that \eqref{eq:betaphi=sumNCirrpi} does not
      appear explicitly in the literature,
      but as \eqref{eq:beta=sumNCirrpi}
      it follows
      from Lemma~\ref{lem:closure}
      applied to the poset $\NC(n)$ with
      the interval closure operator
      from Definition~\ref{def:intervalclosure},
      where for $\pi\in\NC(n)$, we set
      \begin{align*}
        F(\pi)&=\varphi_\pi(a_1,\ldots,a_n) , \\
        f(\pi)&=\prod_{V\in \OB(\pi)} 
        r_V^{\varphi,\psi}(a_1,a_2,\dots,a_n)
        \prod_{W\in \IB(\pi)}
                r_W^{\psi}(a_1,a_2,\dots,a_n) ,\\
        \intertext{and for $\pi\in\IntPart(n)$ we set}
        g(\pi)&=\beta_{\pi}(a_1,a_2,\dots,a_n)
                .
      \end{align*}

    \end{proof}
  \end{enumerate}
	
\end{proposition}

\begin{remdef}
  \label{remdef:fCAC}
  \begin{enumerate}[label=(\roman*)]
   \item []
    
   \item
    Let $(\alg{M},\psi)$ be a noncommutative probability space.
    Assume that $\alg{A}_1,\ldots,\alg{A}_s$ are free subalgebras of $\alg{M}$.
    As an immediate consequence of the vanishing of mixed cumulants
    (Proposition~\ref{prop:vanishingmixedcumulants}) and the preceding
    proposition, Boolean cumulants of free random variables satisfy the
    property of \emph{vanishing of Cyclically Alternating Cumulants},
    in short property (CAC):
    \begin{align*}
      \beta_n^\psi(a_1,a_2,\dots,a_n) &= 0
    \end{align*}
    whenever $a_1,a_2,\dots,a_n\in\bigcup_{i=1}^s\alg{A}_i$ and
    $a_1$ and $a_n$ come from different subalgebras.
   \item
    Similarly, in a two-state noncommutative probability space
    $(\alg{M},\varphi,\psi)$ 
    space  property (CAC) is satisfied
    by the Boolean cumulants with respect to both states:
    Assume
    that $\alg{A}_1,\ldots,\alg{A}_s$ are $c$-free subalgebras of $\alg{M}$. 
    Then
    \begin{align*}
      \beta_n^\psi(a_1,a_2,\dots,a_n) &= 0\\
      \beta_n^\varphi(a_1,a_2,\dots,a_n) &= 0
    \end{align*}
    whenever $a_1,a_2,\dots,a_n\in\bigcup_{i=1}^s\alg{A}_i$ and
    $a_1$ and $a_n$ come from different subalgebras.
  \end{enumerate}
  
\end{remdef}

\begin{definition}
  \label{def:irrreford}
  The  \emph{irreducible refinement order} on $\NC(n)$ is defined as follows: Let $\pi,\sigma\in \NC(n)$. 
  We say that $\pi\ll\sigma$ if $\pi\leq\sigma$ and in addition the restrictions of $\pi$ to the blocks of
  $\sigma$ are irreducible, i.e., for every $W\in\sigma$ we have $\min(W)\sim_\pi \max(W)$.
\end{definition}

The irreducible refinement order was introduced in
\cite {BelinschiNicaBBPforKTuples}
and \cite {PetrulloSenato:2011:kerov} 
(see also \cite {Nica:2010:linked}, \cite {BianeJosuatVerges:2019:bruhat})
in order to generalize formula \eqref{eq:beta=sumNCirrpi} to partitioned
cumulants.

\begin{proposition}
	Let $(\alg{M},\psi)$ be a noncommutative probability space and
        $(r^\psi_n)_{n\geq1},(\beta_n^\psi)_{n\geq1}$ be the sequences of free
        and Boolean cumulant functionals, respectively. Then
	\begin{equation}
		\label{eq:betarho=sumNCirrpi}
		\beta_\rho^\psi(a_1,a_2,\dots,a_n) = \sum_{\pi\ll\rho}    r_\pi^\psi(a_1,a_2,\dots,a_n)
	\end{equation}
	for any $n\geq1$, $\rho\in \NC(n)$ and $a_1,\ldots,a_n\in \alg{M}$.
\end{proposition}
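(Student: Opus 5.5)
We prove \eqref{eq:betarho=sumNCirrpi} by reducing it blockwise to the unpartitioned formula \eqref{eq:beta=sumNCirrpi}. Fix $\rho\in\NC(n)$. By definition, the partitioned Boolean cumulant factorizes over the blocks of $\rho$:
\[
  \beta_\rho^\psi(a_1,\dots,a_n)=\prod_{W\in\rho}\beta^\psi_{\abs{W}}\bigl((a_i)_{i\in W}\bigr).
\]
For each block $W=\{i_1<\dots<i_s\}$ we apply \eqref{eq:beta=sumNCirrpi} to the tuple $(a_{i_1},\dots,a_{i_s})$. This gives
\[
  \beta^\psi_{s}(a_{i_1},\dots,a_{i_s})=\sum_{\pi_W\in\NCirr(s)} r^\psi_{\pi_W}(a_{i_1},\dots,a_{i_s}).
\]
We then transport $\pi_W$ to a partition of $W$ via the order-preserving bijection $[s]\to W$. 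Multiplying these expansions over all $W\in\rho$ produces a sum over families $(\pi_W)_{W\in\rho}$, where each $\pi_W$ is an irreducible noncrossing partition of $W$. The summand is $\prod_W r^\psi_{\pi_W}=r^\psi_\pi$, with $\pi=\bigcup_W\pi_W$.

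It remains to show that the map $(\pi_W)_{W\in\rho}\mapsto\pi=\bigcup_W\pi_W$ is a bijection onto $\{\pi\in\NC(n):\pi\ll\rho\}$.

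Injectivity is clear, since $\pi_W=\pi|_W$. The map is also onto the set of partitions $\pi\le\rho$ whose restrictions to the blocks of $\rho$ are irreducible. Indeed, given such $\pi$, set $\pi_W:=\pi|_W$; irreducibility of $\pi|_W$ means exactly $\min W\sim_\pi\max W$, which is the condition in Definition~\ref{def:irrreford}.

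The only point needing care is that $\pi=\bigcup_W\pi_W$ is genuinely noncrossing. Suppose blocks $V\subseteq W$ and $V'\subseteq W'$ of $\pi$ cross. They cannot lie in the same block of $\rho$, since $\pi_W$ is noncrossing. So $W\neq W'$, and then $W$ and $W'$ would cross, contradicting $\rho\in\NC(n)$. Notably, this step does not even use irreducibility. Hence $\pi\in\NC(n)$, $\pi\le\rho$, and $\pi\ll\rho$.

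Summing over these families therefore yields $\sum_{\pi\ll\rho}r^\psi_\pi(a_1,\dots,a_n)$, which proves the claim. The main, though mild, obstacle is this bookkeeping: the relabeling of $\NCirr(\abs{W})$ onto partitions of $W$, together with the noncrossing check for the union.
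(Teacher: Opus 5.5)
Your proof is correct and takes essentially the approach the paper has in mind. You factor $\beta^\psi_\rho$ over the blocks of $\rho$, expand each factor by \eqref{eq:beta=sumNCirrpi}, and identify families $(\pi_W)_{W\in\rho}$ of irreducible noncrossing partitions of the blocks with $\{\pi\in\NC(n):\pi\ll\rho\}$; this includes your check that the union is noncrossing because $\rho$ is, and that is exactly the blockwise extension of \eqref{eq:beta=sumNCirrpi} the paper invokes when citing the literature for this partitioned version (the paper gives no proof of its own).
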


\subsection{Recurrence diagrams}
Let $(\alg{M},\varphi,\psi)$ be a two-state noncommutative probability space. We take the basic pattern of the recurrences from
\cite[p.366]{BozejkoLeinertSpeicher:1996:convolution}.
The free and $c$-free cumulants satisfy the so-called \emph{unshuffle}  recurrence
$$
\begin{tikzpicture}[anchor=base,baseline]
	\paczek{0.0,0.0}{\psi}
\end{tikzpicture}
=
\sum
\begin{tikzpicture}[anchor=base,baseline]
  \node at (-0.75,0.8) {$r^{\psi}$};
  \draw (-0.75,-0.4)--(-0.75,0.75)--(5.25,0.75)--(5.25,-0.4);
  \draw (0.75,-0.4)--(0.75,0.75);
  \draw (2.25,-0.4)--(2.25,0.75);
  \draw (3.75,-0.4)--(3.75,0.75);
  
  \paczek{0.0,0.0}{\psi}
  \paczek{1.5,0.0}{\psi}
  \node at (3.0,0.0){$\cdots$};
  \paczek{4.5,0.0}{\psi}
  \paczek{6.0,0.0}{\psi} 
\end{tikzpicture}
$$
$$
\begin{tikzpicture}[anchor=base,baseline]
  \paczek[lightgray]{0.0,0.0}{\varphi}
\end{tikzpicture}
=
\sum
\begin{tikzpicture}[anchor=base,baseline]
  \node at (-0.75,0.8) {$r^{\varphi,\psi}$};
  \draw (-0.75,-0.4)--(-0.75,0.75)--(5.25,0.75)--(5.25,-0.4);
  \draw (0.75,-0.4)--(0.75,0.75);
  \draw (2.25,-0.4)--(2.25,0.75);
  \draw (3.75,-0.4)--(3.75,0.75);
  
  \paczek{0.0,0.0}{\psi}
  \paczek{1.5,0.0}{\psi}
  \node at (3.0,0.0){$\cdots$};
  \paczek{4.5,0.0}{\psi}
  \paczek[lightgray]{6.0,0.0}{\varphi} 
\end{tikzpicture}
$$
while the respective Boolean cumulants satisfy the so-called
\emph{deconcatenation} recurrence  \eqref{eq:recurrenceboolcum} and
\eqref{eq:recurrenceboolcumpsi}: 
\begin{equation}\label{fig:boolRecPsi}
  \begin{tikzpicture}[anchor=base,baseline]
    \paczek{0.0,0.0}{\psi}
  \end{tikzpicture}
  =
  \sum
\begin{tikzpicture}[anchor=base,baseline]
    \node at (-0.25,0.45) {$\beta^{\psi}$};
    \draw (-0.25,-0.25)--(-0.25,0.4)--(3.25,0.4)--(3.25,-0.25);
    \draw (0.25,-0.25)--(0.25,0.4);
    \draw (0.75,-0.25)--(0.75,0.4);
    \draw (1.25,-0.25)--(1.25,0.4);
\node at (2.0,-0.1){$\cdots$};
\draw (2.75,-0.25)--(2.75,0.4);  
    \paczek{4.0,0.0}{\psi} 
  \end{tikzpicture}
\end{equation}
			
$$
\begin{tikzpicture}[anchor=base,baseline]
  \paczek[lightgray]{0.0,0.0}{\varphi}
\end{tikzpicture}
=
\sum
\begin{tikzpicture}[anchor=base,baseline]
  \node at (-0.25,0.45) {$\beta^{\varphi}$};
  \draw (-0.25,-0.25)--(-0.25,0.4)--(3.25,0.4)--(3.25,-0.25);
  \draw (0.25,-0.25)--(0.25,0.4);
  \draw (0.75,-0.25)--(0.75,0.4);
  \draw (1.25,-0.25)--(1.25,0.4);
\node at (2.0,-0.1){$\cdots$};
\draw (2.75,-0.25)--(2.75,0.4);  
  \paczek[lightgray]{4.0,0.0}{\varphi} 
\end{tikzpicture}
$$
and consequently
the relations between noncrossing and Boolean cumulants follow a similar pattern:
$$
\begin{tikzpicture}[anchor=base,baseline]
  \paczek{0.0,0.0}{\beta^\psi}
\end{tikzpicture}
=
\sum
\begin{tikzpicture}[anchor=base,baseline]
  \node at (-0.75,0.8) {$r^{\psi}$};
  \draw (-0.75,-0.4)--(-0.75,0.75)--(5.25,0.75)--(5.25,-0.4);
  \draw (0.75,-0.4)--(0.75,0.75);
  \draw (2.25,-0.4)--(2.25,0.75);
  \draw (3.75,-0.4)--(3.75,0.75);
  
  \paczek{0.0,0.0}{\psi}
  \paczek{1.5,0.0}{\psi}
  \node at (3.0,0.0){$\cdots$};
  \paczek{4.5,0.0}{\psi}
\end{tikzpicture}
$$
$$
\begin{tikzpicture}[anchor=base,baseline]
  \paczek[lightgray]{0.0,0.0}{\beta^\varphi}
\end{tikzpicture}
=
\sum
\begin{tikzpicture}[anchor=base,baseline]
  \node at (-0.75,0.8) {$r^{\varphi,\psi}$};
  \draw (-0.75,-0.4)--(-0.75,0.75)--(5.25,0.75)--(5.25,-0.4);
  \draw (0.75,-0.4)--(0.75,0.75);
  \draw (2.25,-0.4)--(2.25,0.75);
  \draw (3.75,-0.4)--(3.75,0.75);
  
  \paczek{0.0,0.0}{\psi}
  \paczek{1.5,0.0}{\psi}
  \node at (3.0,0.0){$\cdots$};
  \paczek{4.5,0.0}{\psi}
\end{tikzpicture}.
$$
The unshuffle and deconcatenation recurrences find a conceptual explanation within the Hopf-algebraic approach to moments and different kinds of cumulants developed in \cite{EbrahimiFardPatras2020}.
\subsection{Cumulants with products as entries}
The classical formula of Leonov-Shiryaev for cumulants with products as entries
has its natural analogue for Boolean cumulants.
It is again a consequence of Lemma~\ref{lem:closure},
this time the closure operator being $c(\pi)=\pi\vee\rho$.
\begin{proposition}
  \label{prop:boolprod}
  Let $a_1,a_2,\dots,a_n\in \alg{A}$ be random variables
  then
  \begin{equation}
    \label{eq:BoolProd}
    \beta_{m+1}(a_1a_2\cdots a_{d_1},a_{d_1+1}a_{d_1+2}\cdots a_{d_2},\ldots,a_{d_{m}+1}a_{d_{m}+2}\cdots a_{n})
    =\sum_{\substack{\pi \in \IntPart(n)\\ \pi \vee \rho=1_n}}\beta_\pi(a_1,a_2,\ldots,a_n),
  \end{equation}
  where $\rho=\{\{1,2,\ldots,d_1\},\{d_1+1,d_1+2,\ldots,d_2\},\ldots,\{d_m+1,\ldots,n\}\}\in \IntPart(n)$, and $\vee$ is the join in the lattice of interval partitions.
  The condition $\pi \vee \rho=1_n$ is equivalent to
  \begin{equation*}
\pi \geq \{\{1\},\{2\},\dots,\{d_1-1\},\{d_1,d_1+1\},\{d_1+2\},\dots,\{d_m-1\},\{d_m,d_m+1\},\dots,\{d_{n}\}\}.
  \end{equation*}
\end{proposition}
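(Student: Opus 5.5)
We apply Lemma~\ref{lem:closure} to the poset $P=\IntPart(n)$ with the closure operator $c(\pi)=\pi\vee\rho$, the join taken in the lattice of interval partitions. First we check that this is a closure operator. It is increasing because $\pi\leq\pi\vee\rho$. It is order preserving because the join is monotone. It is idempotent because $(\pi\vee\rho)\vee\rho=\pi\vee\rho$. The closed elements are exactly the interval partitions $\sigma\geq\rho$. This subposet $\overline{P}$ is isomorphic to $\IntPart(m+1)$: a coarsening of $\rho$ is obtained by merging consecutive blocks of $\rho$, so it corresponds to an interval partition of the $m+1$ product-entries $A_1=a_1\cdots a_{d_1}$, $A_2=a_{d_1+1}\cdots a_{d_2},\dots,A_{m+1}$.

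Next we choose the functions. For $\pi\in P$ set $f(\pi)=\beta_\pi(a_1,\dots,a_n)$. By \eqref{eq:mcboolean}, its partial sums are $F(\sigma)=\varphi_\sigma(a_1,\dots,a_n)$ for any interval partition $\sigma$, since the sum over $\pi\leq\sigma$ factorizes over the blocks of $\sigma$. For $\sigma\in\overline{P}$, corresponding to $\tau\in\IntPart(m+1)$, set $g(\sigma)=\beta_\tau(A_1,\dots,A_{m+1})$. For closed $\sigma$, grouping the letters gives $\varphi_\sigma(a_1,\dots,a_n)=\varphi_\tau(A_1,\dots,A_{m+1})$. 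Applying \eqref{eq:mcboolean} to the product entries blockwise then gives $F(\sigma)=\sum_{\tau'\leq\tau}\beta_{\tau'}(A_1,\dots,A_{m+1})=\sum_{\sigma'\in\overline{P},\,\sigma'\leq\sigma}g(\sigma')$, which is exactly the hypothesis of the lemma. Lemma~\ref{lem:closure} then yields $g(y)=\sum_{c(z)=y}f(z)$. Taking $y=1_n$, which corresponds to $\tau=1_{m+1}$, gives \eqref{eq:BoolProd}.

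For the reformulation of the condition, note that in $\IntPart(n)$ an interval partition is determined by its set of cut points. A cut between positions $j$ and $j+1$ is present in $\pi\vee\rho$ if and only if it is present in both $\pi$ and $\rho$. The cuts of $\rho$ sit exactly after $d_1,\dots,d_m$, so $\pi\vee\rho=1_n$ holds if and only if $\pi$ has no cut at any $d_k$. This means $d_k\sim_\pi d_k+1$ for all $k$, which is the stated inequality.

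The only step needing care is the identification $F(\sigma)=\sum_{\sigma'\leq\sigma}g(\sigma')$ for closed $\sigma$. It requires matching the lattice $[\rho,1_n]$ with $\IntPart(m+1)$ and using multiplicativity of partitioned moments and cumulants. This is routine, so no real obstacle is expected.
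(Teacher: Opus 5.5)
Your proposal is correct and follows the paper's route. The paper proves the proposition only by citing Lemma~\ref{lem:closure} on $\IntPart(n)$ with the closure operator $c(\pi)=\pi\vee\rho$. You fill in what it leaves implicit: the identification of the closed elements with $\IntPart(m+1)$, the choice of $f$, $F$ and $g$, and the cut-point description of the join that yields the reformulated condition.
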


The expansion \eqref{eq:BoolProd} has the following recursive reformulation
\cite{LehnerSzpojankowski:2023:freeint1}.
\begin{corollary}
  \label{cor:RecursiveBoolProd}
  Let $a_1,a_2,\dots,a_n\in \alg{A}$ be random variables.
  Given an  interval partition
  $\rho=\{\{1,\ldots,d_1\},\{d_1+1,\ldots,d_2\},\ldots,\{d_{m-1}+1,\ldots,n\}\}\in
  \IntPart(n)$, the Boolean cumulant with the corresponding products as entries
  is
      \begin{equation}
    \label{eq:RecursiveBoolProd}
    \begin{multlined}
      \beta_{m}(a_1a_2\cdots a_{d_1},a_{d_1+1}a_{d_1+2}\cdots a_{d_2},\ldots,a_{d_{m-1}+1}a_{d_{m-1}+2}\cdots a_{n})\\
      = \sum_{k=1}^n \sum_{d_{k-1}<j<d_k} 
         \beta_{j}(a_1,a_2,\ldots,a_j)\beta_{m-k+1}(a_{j+1}a_{j+2}\cdots
         a_{d_k},a_{d_k+1}a_{d_k+2}\dotsm a_{d_{k+1}},\ldots,a_{d_{m-1}+1}\cdots a_n).
    \end{multlined}
  \end{equation}
\end{corollary}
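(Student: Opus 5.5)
We derive the recursion directly from Proposition~\ref{prop:boolprod} by splitting the sum over interval partitions $\pi$ with $\pi\vee\rho=1_n$ according to the first block of $\pi$. Write $d_0=0$ and $d_m=n$, so that the $k$-th block of $\rho$ is $\{d_{k-1}+1,\dots,d_k\}$ (the statement's sum over $k$ effectively runs over $k=1,\dots,m$). By Proposition~\ref{prop:boolprod}, the condition $\pi\vee\rho=1_n$ says exactly that none of the "gluing pairs" $\{d_i,d_i+1\}$, $1\le i\le m-1$, is cut by $\pi$. In other words, every boundary of a block of $\pi$ must fall strictly inside a block of $\rho$, never at some $d_i$ with $i<m$.

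Let $\{1,\dots,j\}$ be the first block of $\pi$. If $j=n$, then $\pi=1_n$. Otherwise $j$ is a cut point of $\pi$, so $j\ne d_i$ for $i<m$, and hence $d_{k-1}<j<d_k$ for a unique $k\in\{1,\dots,m\}$. The case $j=n=d_m$ fits the same pattern with $k=m$: we interpret $d_{k-1}<j\le d_k$, or equivalently declare the leftover cumulant to be $1$ when $j=n$, which I would check matches the intended convention. I would make this endpoint convention explicit at the start.

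Fix such $j$ and $k$, and let $\pi'$ be the restriction of $\pi$ to $\{j+1,\dots,n\}$, relabelled as an interval partition of $[n-j]$. Then $\beta_\pi(a_1,\dots,a_n)=\beta_j(a_1,\dots,a_j)\,\beta_{\pi'}(a_{j+1},\dots,a_n)$. The map $\pi\mapsto\pi'$ is a bijection between interval partitions of $[n]$ with first block $\{1,\dots,j\}$ satisfying the gluing condition, and interval partitions $\pi'$ of $\{j+1,\dots,n\}$ satisfying $\pi'\vee\rho'=1$. Here $\rho'$ is the interval partition with blocks $\{j+1,\dots,d_k\},\{d_k+1,\dots,d_{k+1}\},\dots$, obtained by restricting $\rho$; it has $m-k+1$ blocks.

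The key point in this bijection is that the forbidden cut points of $\pi'$ are exactly $d_k,\dots,d_{m-1}$. These are precisely the forbidden cut points of $\pi$ lying beyond $j$, and the cut at $j$ itself is allowed because $j$ is not of the form $d_i$. Applying Proposition~\ref{prop:boolprod} again, now to $\rho'$, resums $\sum_{\pi'}\beta_{\pi'}$ to
\[
\beta_{m-k+1}(a_{j+1}\cdots a_{d_k},\,a_{d_k+1}\cdots a_{d_{k+1}},\,\dots,\,a_{d_{m-1}+1}\cdots a_n).
\]
Summing over $j$ and $k$ gives \eqref{eq:RecursiveBoolProd}.

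The only delicate step is this bookkeeping: checking that the gluing condition restricts correctly to the tail, and handling the boundary case $j=n$. No genuine obstacle is expected.
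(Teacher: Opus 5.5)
Your proof is correct and is the argument the paper leaves implicit when it calls \eqref{eq:RecursiveBoolProd} the recursive reformulation of Proposition~\ref{prop:boolprod}: you split the Leonov--Shiryaev sum by the first block $\{1,\dots,j\}$ of $\pi$, note that $j\notin\{d_1,\dots,d_{m-1}\}$, and resum the tail against the truncated partition $\rho'$ with $m-k+1$ blocks. Your endpoint caveat is a needed correction rather than a convention: the strict range $d_{k-1}<j<d_k$ misses the $\pi=1_n$ term $\beta_n(a_1,\dots,a_n)$ (for $m=n=2$ the printed right-hand side is empty while the left side is $\beta_2(a_1,a_2)$), so one must allow $j=d_m=n$ with the empty product read as $1$ and $\beta_1(1)=1$, and let $k$ run only up to $m$.
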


\section{VNRP for $c$-freeness}
\label{sec:VNRP}

In this section, we derive a formula for the Boolean cumulants of $c$-free
elements, generalizing the results from \cite{FMNS2} which express Boolean
cumulants of free elements using noncrossing partitions with so-called
\emph{vertical no-repeat property}.  
\par In order to state the next combinatorial result, we fix $m \in \mathbb{N}$
and a colouring $c:\{1,\ldots,m\}\to\{1,\ldots,s\}$.
Then, we define 
$$
\NC(m;c) := \{\sigma\in \NC(m)\;|\; \sigma\leq \ker c\},
$$
the set of noncrossing partitions compatible with $c$. We also set $\NCirr(m;c):=\NC(m;c)\cap\NCirr(m)$.

\begin{theorem}[{\cite[Theorem 1.1]{FMNS2}}]
  \label{thm:vnrp}
  \noindent
  For every $\sigma \in \NC(m; c)$ there exists a unique maximal $\rho \in \NC(m;
  c)$ such that $\sigma \ll \rho$, which is characterized by the
  \emph{Vertical-No-Repeat property (VNRP)}, that is,
  every inner block of $\rho$ is nested immediately inside a block of a different colour.
\end{theorem}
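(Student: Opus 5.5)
Fix $\sigma\in\NC(m;c)$ and consider the set $U_\sigma=\{\rho\in\NC(m;c):\sigma\ll\rho\}$, which is nonempty since $\sigma\in U_\sigma$ and finite. The plan is a merging argument: first show that a $\rho\in U_\sigma$ fails VNRP exactly when it can be enlarged inside $U_\sigma$; then show that $U_\sigma$ contains only one element with VNRP.

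\textbf{Elementary move.} Take $\rho\in U_\sigma$ and suppose $\rho$ has an inner block $W$ immediately nested inside a block $V$ of the same colour. Let $\rho'$ be obtained from $\rho$ by replacing $V$ and $W$ with the single block $V\cup W$. The new partition is noncrossing, because $W$ lies in a gap of $V$ and no other block separates them. It is still $\leq\ker c$, since $V$ and $W$ have the same colour.

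We must check $\sigma\ll\rho'$, i.e.\ that the restriction of $\sigma$ to $V\cup W$ is irreducible. The extremes of $V\cup W$ are those of $V$, because $W$ is nested inside $V$. Since $\sigma|_V$ is irreducible, $\min V\sim_\sigma\max V$. The blocks of $\sigma|_W$ sit inside a gap of $V$ and do not disturb this connection. Hence $\sigma|_{V\cup W}$ still has the outer block containing $\min V$ and $\max V$.

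Conversely, we must show that any $\rho'\in U_\sigma$ with $\rho<\rho'$ forces $\rho$ to violate VNRP. Let $B$ be a block of $\rho'$ that is a union of several $\rho$-blocks. Since $\sigma|_B$ is irreducible, the $\rho$-block $V_0\subseteq B$ containing $\min B$ also contains $\max B$. Every other $\rho$-block of $B$ is therefore nested in $V_0$. Among these, choose one that is outermost relative to $\rho$. Any $\rho$-block strictly between it and $V_0$ cannot lie in $B$, and that obstruction is what needs handling. It is cleanest to work directly with a $\rho$-block $W\subseteq B$ whose immediate parent $P$ in $\rho$ is not contained in $B$. Then $B$ would contain points both inside and outside $P$, namely $W$ and elements of $V_0$ surrounding $P$, which forces a crossing with $P$ unless $P$ is nested around $V_0$. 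That is impossible because $W\subseteq B\subseteq[\min V_0,\max V_0]$. So the immediate parent of $W$ lies in $B$ and has the same colour: VNRP fails. Hence the maximal elements of $U_\sigma$ are exactly its VNRP elements.

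\textbf{Uniqueness.} This is the main obstacle. I would show that $U_\sigma$ is closed under joins in $\NC(m)$; then the join of all its elements is the unique maximum. For $\rho_1,\rho_2\in U_\sigma$, consider $\rho_1\vee\rho_2$. It is $\leq\ker c$: a join block is connected through overlapping blocks and nesting-induced merges, and every block of $\rho_i$ has irreducible $\sigma$-restriction, hence is spanned by a $\sigma$-block, hence is monochromatic. It remains to verify irreducibility of $\sigma$ restricted to a join block. The join block is built from a chain of $\rho_i$-blocks whose outer $\sigma$-blocks connect the extremes. I expect the care needed here to be with the non-crossing closure inside $\NC$, where the join may merge blocks beyond the set-theoretic union.

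As a fallback, I would argue uniqueness via a canonical description instead. Define $\hat\rho$ by repeatedly applying the elementary move starting from $\sigma$, and show by a diamond-lemma (local confluence) argument that the result is independent of the order of the moves. Two moves either commute or merge overlapping pairs to the same block, so the terminating rewriting system has a unique normal form. Moreover, every $\rho\in U_\sigma$ is reachable from $\sigma$ by such moves, by the converse argument run in reverse. Hence the VNRP element is unique and dominates all of $U_\sigma$.
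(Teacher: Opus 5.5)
\paragraph{Assessment.} The paper gives no proof of this statement; it imports it from \cite{FMNS2}, so I judge your argument on its own terms. The first half is sound. The elementary move shows that an element of $U_\sigma$ without VNRP is not maximal. Your crossing argument is also correct: a $\rho$-block $W\subseteq B$, $W\ne V_0$, whose immediate $\rho$-parent $P$ is not contained in $B$ would force $B$ to cross the $\rho'$-block containing $P$, so a VNRP element has no strict enlargement in $U_\sigma$.

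\paragraph{The gap.} Uniqueness is the real content of the theorem, and you only outline it. In the join route the colour argument is beside the point. Blocks of $\rho_1,\rho_2$ are monochromatic simply because $\rho_i\le\ker c$. What must be shown is that the join in $\NC(m)$ does not merge further blocks, possibly of different colours or in a way that destroys $\min B\sim_\sigma\max B$, and you leave exactly this open. In the diamond-lemma route, the sentence ``two moves either commute or merge overlapping pairs to the same block'' is the unproved step. You need that after merging $W_1$ into its immediate parent $V_1$, another admissible pair $(W_2,V_2)$ is still an inner block immediately nested in a block of the same colour (with $V_1$ or $W_1$ renamed as $V_1\cup W_1$). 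In other words, a merge must neither create a new intermediate block nor change immediate parents elsewhere.

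\paragraph{The missing lemma.} The missing ingredient is one structural fact. If $W$ is immediately nested in $V$ in $\rho\in\NC(m)$, then after replacing $V,W$ by $V\cup W$, every other block keeps its immediate parent, up to renaming $V$ or $W$ as $V\cup W$. This holds for three reasons:
\begin{itemize}
\item the blocks whose convex hull contains a given block form a chain under nesting;
\item $V$ and $W$ are consecutive in every such chain that contains $W$;
\item $V\cup W$ has the same hull as $V$.
\end{itemize}
So a merge is exactly the contraction of the edge $W\to V$ in the nesting forest.

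\paragraph{How this finishes the proof.} With this lemma local confluence is immediate, but you can drop both Newman's lemma and joins. Your own converse argument, applied to $\sigma$ and any $\rho\in U_\sigma$, shows that each $\sigma$-block of a $\rho$-block, other than the top one, has its immediate $\sigma$-parent in the same $\rho$-block. Hence $\rho$ arises by contracting a set $S$ of child--parent edges of the nesting forest of $\sigma$, and these edges are necessarily monochromatic. Conversely, by the lemma and your elementary move, contracting any set of monochromatic edges gives an element of $U_\sigma$.

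Thus $U_\sigma$ is order-isomorphic to the Boolean lattice of subsets of the set $E_c$ of monochromatic child--parent edges of $\sigma$. Its unique maximum contracts all of $E_c$. It has VNRP because the edges surviving in the contracted forest are precisely the non-monochromatic ones.

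This also answers your worry about the noncrossing closure. The set-theoretic join of two elements of $U_\sigma$ corresponds to $S_1\cup S_2$ and is already noncrossing.
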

Our goal is to extend the characterization of freeness from \cite{FMNS2} to the setting of a two-state
noncommutative probability space $(\mathcal{M},\varphi,\psi)$ and give a similar characterization of conditional freeness of Bożejko, Leinert and Speicher
\cite{BozejkoLeinertSpeicher:1996:convolution}.

The following ``$c$-free Boolean cumulant functionals''
$\beta^{\varphi,\psi}$,
depending on both states like the $c$-free cumulants \eqref{eq:mccfree},
appear naturally in the generalization of \eqref{eq:betaphi=sumNCirrpi}.
\begin{definition}
	\label{def:BooleanChiPhi}
	Let $(\alg{M},\varphi,\psi)$ be a two-state noncommutative probability space, and fix $n \in \mathbb{N}$ and $\pi \in \NC(n)$.  
	We define the \emph{nested two-state Boolean cumulant functional} $\beta^{\varphi,\psi}_\pi:\mathcal{M}^n\to\mathbb{C}$ by the formula
	\begin{align*}
		\beta^{\varphi,\psi}_\pi(a_1,\ldots,a_n)
		:=\prod_{
V\in\OB(\pi)            
		}
		\beta_{V}^{\varphi}\bigl(a_1,\ldots,a_n\bigr)
		\prod_{
W\in\IB(\pi)  
		}
		\beta_{W}^{\psi}\bigl(a_1,\ldots,a_n\bigr),
	\end{align*}
	for every $a_1,\ldots,a_n\in \alg{M}$.
\end{definition}
\begin{remark}
	Observe that for an interval partition $\pi\in \IntPart(n)$ we have 
	\[\beta^{\varphi,\psi}_\pi(a_1,\ldots,a_n)=\beta^{\varphi}_\pi(a_1,\ldots,a_n).\]
\end{remark}

\begin{proposition} \label{prop:35}
	Let $(\alg{M},\varphi,\psi)$ be a two-state noncommutative probability space,
	fix $n \in \mathbb{N}$ and $\pi \in \NC(n)$. Then for every
	$a_1,\ldots,a_n\in\alg{M}$ the nested two-state Boolean cumulants are given by
	\[\beta_\pi^{\varphi,\psi}(a_1,\ldots,a_n)=\sum_{\rho\ll \pi}
	\prod_{
V\in\OB(\rho)                   
	}
	r_{V}^{\varphi,\psi}\bigl(a_1,\ldots,a_n\bigr)
	\prod_{
W\in\IB(\rho)
	}
	r_{W}^{\psi}\bigl(a_1,\ldots,a_n\bigr).\]
\end{proposition}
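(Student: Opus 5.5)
The plan is to expand each factor of $\beta^{\varphi,\psi}_\pi$ blockwise, using \eqref{eq:betaphi=sumNCirrpi} for the outer blocks and \eqref{eq:beta=sumNCirrpi} for the inner blocks. Fix $\pi\in\NC(n)$. For an outer block $V\in\OB(\pi)$, restrict the arguments to $V$ and apply \eqref{eq:betaphi=sumNCirrpi}. This gives $\beta^\varphi_V$ as a sum over irreducible noncrossing partitions $\rho_V$ of $V$ (identified with $\NCirr(\abs{V})$ via the order-preserving bijection). Each term weights the unique outer block of $\rho_V$ by $r^{\varphi,\psi}$ and every other block by $r^\psi$. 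For an inner block $W\in\IB(\pi)$, apply \eqref{eq:beta=sumNCirrpi}, so $\beta^\psi_W$ becomes a sum over $\rho_W\in\NCirr(W)$ with all blocks weighted by $r^\psi$. Multiplying out, $\beta^{\varphi,\psi}_\pi$ is a sum over families $(\rho_U)_{U\in\pi}$ of irreducible partitions of the blocks of $\pi$.

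The next step uses the standard bijection between such families and $\{\rho\in\NC(n):\rho\ll\pi\}$, sending the family to $\rho=\bigcup_U\rho_U$. Any such $\rho$ is noncrossing: its blocks are contained in blocks of $\pi$, each $\rho_U$ is noncrossing, and blocks lying in different blocks of $\pi$ cannot cross because $\pi$ is noncrossing. Conversely, the restriction of a $\rho\ll\pi$ to each block of $\pi$ is irreducible by Definition~\ref{def:irrreford}. This is the same bijection that underlies \eqref{eq:betarho=sumNCirrpi}, which I would take as known.

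The main point to check is that the weights agree, i.e.\ that the outer blocks of $\rho$ are exactly the outer blocks of the various $\rho_V$ with $V\in\OB(\pi)$. The set $\OB(\rho)$ gets its meaning from nesting inside all of $[n]$, so there are two inclusions to verify.

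First, let $V\in\OB(\pi)$ and let $B$ be the outer block of $\rho_V$. By irreducibility $B$ contains $\min V$ and $\max V$, so $\operatorname{conv}(B)=\operatorname{conv}(V)$. If some block $B'$ of $\rho$ nested $B$, then the block of $\pi$ containing $B'$ would nest $V$ (or equal $V$, which is impossible since $B$ is the outer block of $\rho_V$). Hence $B\in\OB(\rho)$.

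Second, every other block of $\rho$ is nested in some block of $\rho$. A non-outer block of $\rho_V$ is nested inside the outer block of $\rho_V$. A block of $\rho_W$ with $W\in\IB(\pi)$ lies in $W$, which is nested inside some $U\in\pi$. Then $W\subseteq[\min U,\max U]$, and $\min U\sim_\rho\max U$ by irreducibility, so the block of $\rho$ containing $\min U$ and $\max U$ nests the given block.

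Hence $\OB(\rho)$ carries the $r^{\varphi,\psi}$ weights and $\IB(\rho)$ carries the $r^\psi$ weights, which is exactly the summand in Proposition~\ref{prop:35}. This is the step I expect to need the most care, since the nesting has to be tracked both within a block of $\pi$ and across different blocks of $\pi$.
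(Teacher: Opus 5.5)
Your proposal is correct and matches the paper's proof: expand $\beta^\varphi_V$ for each outer block via \eqref{eq:betaphi=sumNCirrpi} and $\beta^\psi_W$ for each inner block via \eqref{eq:beta=sumNCirrpi}, then reassemble the product as a sum over $\rho\ll\pi$. The paper only says the claim follows immediately. Your check that $\OB(\rho)$ consists exactly of the outer blocks of the $\rho_V$ with $V\in\OB(\pi)$ supplies the detail it leaves implicit, and that check is sound.
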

\begin{proof}
	We know that
	\begin{align*}
		\beta^\varphi_n(a_1,\ldots,a_n)
		=& \sum_{\substack{
				\rho\ll 1_n   \\
				1\in V_0
		} }
		r_{V_0}^{\varphi,\psi}\bigl(a_1,\ldots,a_n\bigr)
		\prod_{\substack{
				V\in\rho  \\
				V\neq V_0
		}}
		r_{V}^{\psi}\bigl(a_1,\ldots,a_n\bigr),
		\\
		\beta_n^{\psi}(a_1,a_2,\dots,a_n) = & \sum_{\pi\in\NCirr(n)}    r_\pi^{\psi}(a_1,a_2,\dots,a_n)
	\end{align*}
	Plugging both formulas into the formula from Definition~\ref{def:BooleanChiPhi} we immediately obtain the claim.
\end{proof}

The above proposition is crucial in the proof of the characterization of $c$-freeness in terms of Boolean cumulants below.

\begin{theorem}
  \label{thm:VNTPCfree}
  Let $( \mathcal{M} , \varphi,\psi )$ be a two-state noncommutative
  probability space. Assume that  unital subalgebras
  $\mathcal{A}_1,\ldots,\mathcal{A}_s\subseteq \mathcal{M}$
  are free with respect to $\psi$. 
  Then the following two assertions are equivalent.
  \begin{enumerate}[label=(\roman*)]
   \item
    $\mathcal{A}_1,\ldots,\mathcal{A}_s$ are $c$-free
    independent with respect to $(\varphi,\psi)$.
    
   \item 	
    For every $n \in \mathbb{N}$, every colouring 
    $c : \{ 1, \ldots , n \} \to \{ 1, \ldots , s \}$, and every 
    $x_1 \in \mathcal{A}_{c(1)}, \ldots, x_n \in \mathcal{A}_{c(n)}$, one has
    \begin{equation}
      \label{eq:VNRP:cfree}
      \beta_n^{\varphi} (x_1, \ldots , x_n) = 
      \sum_{
\substack{
          \pi \in \NCirr(n;c) \\
          \textnormal{with VNRP}
        }
      }
      \beta_{\pi}^{\varphi,\psi}(x_1,\ldots,x_n).
    \end{equation}
  \end{enumerate}
\end{theorem}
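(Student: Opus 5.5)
Both directions go through the $c$-free cumulants, using Proposition~\ref{prop:vanishingmixedcumulants}(ii) and Proposition~\ref{prop:35}.

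\emph{(i)$\Rightarrow$(ii).} Assume the $\mathcal{A}_i$ are $c$-free, so mixed free cumulants $r^\psi$ and mixed $c$-free cumulants $r^{\varphi,\psi}$ vanish. By \eqref{eq:betaphi=sumNCirrpi},
\[
\beta^\varphi_n(x_1,\dots,x_n)=\sum_{\sigma\in\NCirr(n)} \prod_{V\in\OB(\sigma)}r^{\varphi,\psi}_V\prod_{W\in\IB(\sigma)}r^\psi_W .
\]
Vanishing of mixed cumulants restricts the sum to $\sigma\in\NCirr(n;c)$. By Theorem~\ref{thm:vnrp}, each such $\sigma$ has a unique maximal $\rho\in\NC(m;c)$ with $\sigma\ll\rho$, and $\rho$ has VNRP. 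Since $\sigma\ll\rho$ forces $\sigma\le\rho$ and $1\sim_\sigma n$, this $\rho$ is again irreducible. Grouping the sum according to $\rho$ gives
\[
\sum_{\rho\in\NCirr(n;c)\text{ VNRP}}\ \sum_{\sigma\ll\rho,\ \sigma\le\ker c}(\cdots).
\]
I will then check that the inner sum equals the full sum over $\sigma\ll\rho$: the terms with $\sigma\not\le\ker c$ vanish anyway, and every $\sigma\ll\rho$ with $\sigma\le\ker c$ has $\rho$ as its maximal element. For the latter, the maximal element of $\sigma$ dominates $\rho$ in $\ll$, and $\rho$ is itself maximal, so the two coincide; this uses transitivity of $\ll$, which I take from \cite{BelinschiNicaBBPforKTuples}.

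It remains to identify the inner sum with $\beta^{\varphi,\psi}_\rho$ via Proposition~\ref{prop:35}. This needs the outer and inner blocks of $\sigma$ to be correctly matched with those of $\rho$: when $\sigma\ll\rho$, the outer blocks of $\sigma$ should be exactly the blocks containing $\min W$ for outer $W\in\rho$. That holds because each restriction is irreducible, so it is the same bookkeeping already used in Proposition~\ref{prop:35}. This proves (ii).

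\emph{(ii)$\Rightarrow$(i).} Freeness with respect to $\psi$ already kills mixed $r^\psi$, so it suffices to show that mixed $r^{\varphi,\psi}$ vanish. Define auxiliary functionals $\tilde r^{\varphi,\psi}_n$ to agree with $r^{\varphi,\psi}$ on non-mixed tuples and to be zero on mixed ones. By Bo\.zejko--Leinert--Speicher there is a state $\tilde\varphi$ on the free product realizing these cumulants, and for it the algebras are $c$-free. Alternatively, one can work purely formally: define $\tilde\beta$ by \eqref{eq:betaphi=sumNCirrpi} with $\tilde r$ in place of $r^{\varphi,\psi}$. By (i)$\Rightarrow$(ii), $\tilde\beta_n$ satisfies \eqref{eq:VNRP:cfree} with $\tilde\beta$ in the outer blocks.

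Now compare $\beta^\varphi$ with $\tilde\beta$ by induction on $n$. On non-mixed tuples they agree. For mixed tuples, the right-hand side of \eqref{eq:VNRP:cfree} involves only $\beta^\psi$ factors on inner blocks, which are the same in both cases, together with outer factors $\beta^\varphi_V$ with $V$ a monochromatic block. Here I expect the main care to be needed. A VNRP irreducible partition with $n\ge2$ points of at least two colours has outer block $V$ that is monochromatic and proper, so $|V|<n$, and the induction hypothesis applies; even so, the induction should be run on $\beta$ restricted to monochromatic blocks, which is trivially equal. Hence $\beta^\varphi_n=\tilde\beta_n$ on all tuples, and therefore all mixed moments of $\varphi$ agree with those of $\tilde\varphi$. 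Finally, $c$-freeness is a condition on mixed moments, and $r^{\varphi,\psi}$ is determined by $\varphi$ and $\psi$ through \eqref{eq:mccfree} by triangular inversion, so $r^{\varphi,\psi}=\tilde r$. Mixed $c$-free cumulants therefore vanish, which gives (i).
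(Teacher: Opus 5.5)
Your proposal is correct and follows the paper's proof of (i)$\Rightarrow$(ii). You expand $\beta^\varphi_n$ over $\NCirr(n;c)$ using vanishing of mixed cumulants, regroup by the VNRP maximal element from Theorem~\ref{thm:vnrp}, and collapse each group with Proposition~\ref{prop:35}; you also check two points the paper leaves implicit, namely that the maximal element is irreducible and what the fibres of the regrouping are.

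For (ii)$\Rightarrow$(i) the paper only cites the standard fact that cumulants determine moments. Your comparison with the $c$-free product is exactly that argument made explicit: both sides satisfy the same VNRP identity with the same $\beta^\psi$ and the same monochromatic $\beta^\varphi$ data.
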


\begin{remark}
  \label{rmk:VNRPDiagrammatic}
  \begin{enumerate}[label=(\roman*)]
   \item []
   \item 
  The special case $\varphi=\psi$ reproduces
  the characterization of freeness from \cite{FMNS2}.
 \item 
  In the case of alternating free arguments,
  formula \eqref{eq:VNRP:cfree}
  can be represented in a diagram as follows:
  In the free case we have
  \begin{align}
    \label{eq:VNRPpsi:pic}
    \beta^\psi(a_0,b_1,a_1,\dots,b_n,a_n)
    &=
      \sum
\begin{tikzpicture}[anchor=base,baseline]
        \node at (-1,0.8) {$\beta^{\psi}$};
        \draw (-1,-0.4)--(-1,0.75)--(7,0.75)--(7,-0.4);
        \draw (1,-0.4)--(1,0.75);
        \draw (3,-0.4)--(3,0.75);
        \draw (5,-0.4)--(5,0.75);
\node at (-1,-0.8){$a_0$};
        \node at (1,-0.8){$a_{i_1}$};
        \node at (3.1,-0.8){$a_{i_2}$};
        \node at (5.3,-0.8){$a_{i_{k-1}}$};
        \node at (7,-0.8){$a_{n}$};
\paczek{0.0,0.0}{\beta^\psi}
        \paczek{2,0.0}{\beta^\psi}
        \node at (4,0.0){$\cdots$};
        \paczek{6,0.0}{\beta^\psi}
      \end{tikzpicture}
    \\
    \intertext{
    and the $c$-free generalization \eqref{eq:VNRP:cfree}
follows the same pattern:}
    \label{eq:VNRPphi:pic}
    \beta^\varphi(a_0,b_1,a_1,\dots,b_n,a_n)
    &=
      \sum
\begin{tikzpicture}[anchor=base,baseline]
        \node at (-1,0.8) {$\beta^{\varphi}$};
        \draw (-1,-0.4)--(-1,0.75)--(7,0.75)--(7,-0.4);
        \draw (1,-0.4)--(1,0.75);
        \draw (3,-0.4)--(3,0.75);
        \draw (5,-0.4)--(5,0.75);
\node at (-1,-0.8){$a_0$};
        \node at (1,-0.8){$a_{i_1}$};
        \node at (3.1,-0.8){$a_{i_2}$};
        \node at (5.3,-0.8){$a_{i_{k-1}}$};
        \node at (7,-0.8){$a_{n}$};
\paczek{0.0,0.0}{\beta^\psi}
        \paczek{2,0.0}{\beta^\psi}
        \node at (4,0.0){$\cdots$};
        \paczek{6,0.0}{\beta^\psi}
      \end{tikzpicture}
  \end{align}
  \end{enumerate}
\end{remark}

\begin{proof}[Proof of Theorem~\ref{thm:VNTPCfree}]
  Proof of $(i)\implies (ii)$.

  This is essentially another application of Lemma~\ref{lem:closure}
  where   this time the poset is $(\NC(m;c), \ll)$
  and the closure   $c(\pi)$ of an element $\pi$ is the unique VNRP
  closure of $\pi$ guaranteed by Theorem~\ref{thm:vnrp}.

  We start with an application of
  Proposition~\ref{prop:35},
  where we remember that for every interval partition $\pi$ we have $\beta^{\varphi,\psi}_\pi=\beta^{\varphi}_\pi$
  \begin{align*}
    \beta_n^{\varphi} (a_1, \ldots , a_n)
    =\sum_{\rho\in \NCirr(n)}
    \prod_{
V\in\OB(\rho)
    }
    r_{V}^{\varphi,\psi}(a_1,\ldots,a_n)
    \prod_{
W\in\IB(\rho)            
    }
    r_{W}^{\psi}(a_1,\ldots,a_n).
  \end{align*}
  Observe that from $c$-freeness, we immediately see that if $\rho$ does not
  respect the colouring then the right-hand side vanishes. Recall that by
  $\NCirr(n;c)$ we denote the set of irreducible noncrossing partitions  coloured with respect to
  $c$. Then we have: 
  \begin{align*}
    \beta_n^{\varphi} (a_1, \ldots , a_n)
    =\sum_{\substack{\rho\in \NCirr(n;c)}}
    \prod_{V\in \OB(\rho)}
    r_{V}^{\varphi,\psi}(a_1,\ldots,a_n)
    \prod_{
W\in\IB(\rho)                      
    }
    r_{W}^{\psi}(a_1,\ldots,a_n).
  \end{align*}
  Now we use Theorem~\ref{thm:vnrp}
  and observe that we can reorganize the sum
  above according to the maximal elements with respect to the
  irreducible refinement order
  $\ll$
  from Definition~\ref{def:irrreford}
  among   coloured partitions,
  and these maximal elements are exactly those with VNRP
  property: 
  \begin{align*}
    \beta_n^{\varphi} (a_1, \ldots , a_n)
    &= \sum_{
\substack{
          \pi \in \NCirr(n;c) \\
          \textnormal{with VNRP}
        }
    }
    \sum_{
\rho\ll\pi }
    r_{V_o}^{\varphi,\psi}(a_1,\ldots,a_n)
    \prod_{
V\in\IB(\rho)            
    }
    r_{V}^{\psi}(a_1,\ldots,a_n)
  \end{align*}
  where for each $\rho$ we denote by $V_o$ its unique outer block.
  We conclude by invoking again Proposition~\ref{prop:35} and we obtain 
  
  \begin{equation*}    
    \beta_n^{\varphi} (a_1, \ldots , a_n) = 
    \sum_{ \substack{
        \pi \in \NCirr(n;c)\\
        \text{with VNRP}
      }}
    \beta_{\pi}^{\varphi,\psi}(a_1,\ldots,a_n).
  \end{equation*}

The proof of implication $(ii)\implies (i)$ follows standard arguments based on the
  fact that joint cumulants determine all joint moments.  
\end{proof}

\begin{remark}
  The above gives another proof of (CAC) property from Remark and
  Definition~\ref{remdef:fCAC}, because in the above characterization if the first and last
  element are $c$-free, the sum runs over the empty set. 
\end{remark}

		\section{Distribution of polynomials in $c$-free variables} \label{sec:4}
		
In this section, we derive a method for determining the distribution of
polynomials in $ c $-free variables. Our main tool is a kind of differential calculus on
the algebra of noncommutative polynomials, and more generally on formal power
series whose coefficients are noncommutative polynomials. This framework is
essential for analyzing distributions of $ c $-free variables. In a general
noncommutative probability space, elements may satisfy nontrivial relations
(e.g., \( a^2 = 1 \))  that invalidate the Leibniz rule of the derivations defined
below. No such obstructions arise in the algebra of noncommutative
polynomials, making this setting more suitable for our purposes. 
		
\subsection{Noncommutative polynomials and formal power series}
\label{ssec:nc_polynomials}
\begin{notation}\label{notation:nc_polynomials}
  Let $\alg{X}=\{X_1,X_2,\ldots,X_n\}$ be an alphabet, i.e.~a set of letters.
  We denote by 
  \[
    \alg{X}^+=\{X_{i_1}X_{i_2}\cdots X_{i_k}\mid k\in\IN, i_j\in
    \{1,2,\dots,n\}\}
  \]
  the free semigroup generated by $\alg{X}$ and by
  $\alg{X}^*=\alg{X}^+\cup\{1\}$ the free monoid.
  Furthermore, we also denote by $\IC\langle\alg{X}\rangle= \IC\langle
  X_1,X_2,\ldots,X_n\rangle$ the free associative algebra generated by the
  variables $X_1,X_2,\ldots,X_n$, i.e., the linear span of $\alg{X}^*$ with the
  concatenation product, also known as the algebra of noncommutative
  polynomials.

  For elements $a_1,a_2,\ldots,a_n\in\alg{M}$ and
  $P\in\IC\langle X_1,X_2,\dots,X_n\rangle$ we denote by
  $P(a_1,a_2,\ldots,a_n)$ the evaluation of a polynomial
  $P\in\IC\langle X_1,X_2,\ldots,X_n\rangle$, i.e., the element of $\alg{M}$
  obtained after substituting every $X_i$ with $a_i$ for $i=1,2,\ldots,n$.

\end{notation}
\begin{definition}
  Fix a noncommutative probability space $(\alg{M},\psi)$.
  The \emph{noncommutative joint distribution} of a tuple of random variables
  $a_1,a_2,\ldots,a_n\in \alg{M}$ is the linear functional
  $\psi_{a_1,a_2,\dots,a_n}:\IC\langle X_1,\ldots,X_n\rangle\to\IC$
  given by the evaluation
  \[
    \psi_{a_1,a_2,\dots,a_n}(P(X_1,\ldots,X_n)):=\psi\left(P(a_1,a_2,\ldots,a_n)\right).
  \]
\end{definition}
\begin{remark}
  \label{rem:freealgebra}
  \begin{enumerate}[label=\arabic*., leftmargin=2em]
   \item []
   \item 
 The distribution of any polynomial $Q(a_1,\ldots,a_n)$ with respect to
  $\psi$ is the same as the distribution of $Q(X_1,\ldots,X_n)$ with respect to
  $\psi_{a_1,a_2,\dots,a_n}$,
  and it suffices to study the distribution of polynomials in
  $(\IC\langle X_1,\ldots,X_n\rangle,\psi_{a_1,a_2,\dots,a_n})$.
  This has the following advantages.
  \begin{enumerate}[label=(\roman*)]
   \item   The augmentation homomorphism  $\epsilon:\mathcal{M}\to\mathbb{C}$
    which maps a polynomial to its constant coefficient $\epsilon(P)=P(0)$
    allows the identification of the free product with the tensor algebra
    and thus the unambiguous translation of multilinear maps into
    linear maps on the free product.
   \item
    The derivations on the free algebra to be defined below
    turn the combinatorial recursions for Boolean cumulants and conditional expectations
    into concise algebraic identities.
  \end{enumerate}
  The disadvantage is a possible loss of faithfulness and positivity which has
  to be kept in mind.
  Nevertheless we will abuse notation and write $\psi$ instead of
  $\psi_{a_1,a_2,\dots,a_n}$, which should not lead to any confusion.

 \item
   For the sake of simplicity we will stick to the two-variable case,
  i.e., focus on the noncommutative probability space $\mathbb{C}\langle
  X,Y\rangle
  $ with states $\psi=\psi_{a,b}$ and $\varphi=\varphi_{a,b}$
The generalization to the multivariate case is straightforward and left to 
  the reader.

 \item
  It is straightforward to extend the formulas from polynomials
  to formal power series with polynomial coefficients and
  we will tacitly do this without further comment.
  \end{enumerate}
  
\end{remark}

\subsection{Boolean cumulant functionals and derivations}

It will be important to factor monomials into alternating blocks of ``pure''
monomials coming from different subalgebras.

\begin{definition}
  The \emph{support} of a polynomial is the set of letters which appear in its
  terms.
  The \emph{block factorization} of a monomial
  $W\in \IC \langle X,Y\rangle $ is the (unique) decomposition
  $W=U_1U_2\cdots  U_n$ into nontrivial monomials $U_i$, each supported either on $X$ alone or on $Y$ alone, alternating between the two.
The  block length of the monomial $W$ is then equal to the number $n$ of blocks. For example the block length of the word $W=X^3Y^2X$ is equal to 3.
\end{definition}

We can now define two kinds of linear functionals related to Boolean cumulants,
one which operates on blocks, and another which operates in letters. 
\begin{definition}
  \label{def:blockcumulant}
   The \emph{block Boolean cumulant functional} is the linear functional
  $\beta^{b,\psi}:\IC\langle  X,Y\rangle\to\mathbb{C}$
  determined by the following values on the monomial basis:
  \begin{equation}
    \label{eq:blockbeta}
    \begin{aligned}
      \beta^{b,\psi}(1)&=1\\
\beta^{b,\psi}(W)& = \beta_n^\psi(U_1,U_2,\ldots,U_n)
    \end{aligned}
  \end{equation}
  where  $W=U_1U_2\cdots U_n$ is the block factorisation of the monomial $W$.  
  Similarly we  define the  \emph{partial block Boolean cumulant functional}
  $\beta^{b,\psi}_X:\IC\langle X,Y\rangle\to\mathbb{C}$ as 
  \begin{align*}
    \beta^{b,\psi}_X(1)
    &= 1,
    \\ \beta^{b,\psi}_X(W)
    &=
      \begin{cases}
        \beta^{b,\psi}(W) & \text{if $W=X V X$ and
          $V\in \IC\langle X,Y\rangle$}\\
        $0$ & \text{otherwise}
      \end{cases}
  \end{align*}
  The linear functional  $\beta^{b,\psi}_Y:\alg{M}\to\mathbb{C}$ is defined analogously.
\end{definition}

For example we have $\beta^{b,\psi}_X(X^3Y^2X)=\beta_3^{\psi}(X^3,Y^2,X)$.

\begin{definition}
  The   \emph{letter-wise Boolean cumulant functional} is the linear functional
  $\beta^{\delta,\psi}:\alg{M}\to \IC$
  determined by the following values on the monomial basis:
  \begin{equation}
    \label{eq:betaDelta}
    \begin{aligned}
      \beta^{\delta,\psi}(1) &= 1 \\
    \beta^{\delta,\psi}(Z_1Z_2\cdots Z_k) &= \beta^\psi_k(Z_1,Z_2,\ldots,Z_k),
    \end{aligned}
  \end{equation}
  where $Z_i\in \{X,Y\}$ for $1\leq i\leq k$.
  In a similar way to $\bbetas{\psi}{X}$, we define the
  \emph{partial letter-wise Boolean cumulant functional}
  $\beta^{\delta,\psi}_X:\alg{M}\to\mathbb{C}$ on the monomial basis by
  \begin{align*}
    \beta^{\delta,\psi}_X(1)
    &= 1,
    \\
    \beta^{\delta,\psi}_X(Z_1Z_2\cdots Z_k)
    &= \left\{
      \begin{tabular}{l l }
        $\beta^{\psi}_k(Z_1,Z_2,\ldots,Z_k)$ & if $Z_1=Z_k = X$,
        \\ $0$ & otherwise
      \end{tabular}\right.
  \end{align*}
\end{definition}
For example we have $\beta^{\delta,\psi}_X(X^3Y^2X)=\beta_6^{\psi}(X,X,X,Y,Y,X)$.

\begin{remark}
  These cumulant functionals can be constructed for any state on $\IC\langle
  X,Y\rangle$, in particular we will
  consider these functionals $\beta^{b,\varphi}$ and $\beta^{\delta,\varphi}$
  for both states of a two-state noncommutative probability space.
  Note that if the joint distribution of $X$ and $Y$ satisfies Property (CAC)
  with respect to a state $\varphi$ then
  \begin{equation}
    \label{eq:beta=betaX+betaY}
    \bbetas{\varphi}{} =   \bbetas{\varphi}{X}+ \bbetas{\varphi}{Y}-\epsilon
    \qquad
    \fbetas{\varphi}{} =   \fbetas{\varphi}{X}+ \fbetas{\varphi}{Y}-\epsilon
    .
  \end{equation}
\end{remark}
The following derivations will be useful for  the algebraic
reformulation
of the recursive version     \eqref{eq:RecursiveBoolProd} of the  Leonov-Shiryaev formula
for Boolean cumulants with products as entries.
\begin{definition}
  \label{def:derivations}
  The
  \emph{free difference quotient}
  is the unique derivation such that
  $\partial_{X}(X)=1\otimes X$ and $\partial_{X}(Y)=0$.
  Thus we have
  \begin{equation*}
\partial_X (X^n)
    = \sum_{k=0}^{n-1} X^k\otimes X^{n-k-1}
    .
  \end{equation*}
  This operator is coassociative and we define
  \[
    \partial_X^2=(\id\otimes \partial_X)\circ \partial_X=( \partial_X \otimes
    \id)\circ \partial_X
  \]
  and similarly with higher powers $\partial_X^k$. 
  We will also need the \emph{free divided power derivations}
  or \emph{partial deconcatenation operators} defined by
  \begin{align*}
    \rdelta_{X}(P) &=  (1\otimes X)\partial_{X}(P),
    &
    \ldelta_{{X}}(P) &=  (X\otimes 1)\partial_{X}(P),
  \end{align*}
  i.e.,
  \begin{align*}
    \rdelta_X (X^n)
    &= \sum_{k=0}^{n-1} X^k\otimes X^{n-k}
    &
    \ldelta_X (X^n)
    &= \sum_{k=1}^{n} X^k\otimes X^{n-k}
  \end{align*}
\end{definition}

\begin{remark}
  Observe that the linear functionals $\bbeta{X}$, $\fbeta{X}$ and the
  derivations discussed above preserve their properties
  when they are extended to formal power series with
  polynomials as coefficients. 
\end{remark}

\subsection{Calculus for Boolean cumulant functionals}

The following theorem subsumes the functional relations between the maps
$\beta^{b,\psi}$ and $\beta^{\delta,\psi}$ in an algebraic way which will
allow us to evaluate these functionals effectively. 

\begin{proposition}
  \label{thm:FIC5.5}
  Assume that the random variables $X$ and $Y$ are free  with
  respect to the state $\psi$ on
  the  noncommutative probability space $\IC \langle X,Y\rangle$.
  \begin{enumerate}[label=(\roman*)]
   \item 
    For any element $P\in \IC\langle X,Y \rangle$ we have
    \begin{equation}
      \label{eq:intro_betas_prod_as_entries}
      \begin{aligned}
        \bbetas{\psi}{{X}}(P) 
        &= \epsilon(P) + (\fbetas{\psi}{{X}}\otimes\bbetas{\psi}{{X}})(\ldelta_{X} P)
        \\
        &= \epsilon(P) +
        (\bbetas{\psi}{{X}}\otimes\fbetas{\psi}{{X}})(\rdelta_{{X}} P)
        .
      \end{aligned}
    \end{equation}
   \item
    For any element $P\in \IC\langle X,Y \rangle$ we have  
\begin{equation}
      \label{eq:IntroVNRP_for_betas:uni}
      \fbetas{\psi}{X}(P)
      = \epsilon(P)+\sum_{k=1}^\infty
      \beta^\psi_k(X)
      \left[\epsilon\otimes
        \left(\bbetas{\psi}{ {Y}}\right)^{\otimes(k-1)}\otimes
        \epsilon\right]\left(
        \partial_X^k (P)
      \right).
    \end{equation}
   \item 
    If in addition  $X$ and $Y$ are $c$-free with respect to the pair
    of states $(\varphi,\psi)$ on
    $\mathbb{C}\langle X,Y\rangle$
    then \eqref{eq:intro_betas_prod_as_entries} holds for Boolean functionals
    with respect to $\varphi$ as well.
    Moreover for any element $P\in \IC\langle X,Y \rangle$ we have 
  \begin{equation}
    \label{eq:ProductsasEntriesCFree}
    \fbetas{\varphi}{X}(P)
    = \epsilon(P)+\sum_{k=1}^\infty
    \beta^\varphi_k(X)
    \left[\epsilon\otimes
      \left(\bbetas{\psi}{{Y}}\right)^{\otimes(k-1)}\otimes
      \epsilon\right]\left(
      \partial_X^k (P)
    \right).
  \end{equation}
  \end{enumerate}
\end{proposition}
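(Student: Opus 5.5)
The plan is to reduce everything to monomials by linearity, evaluate both sides on a word $W=Z_1\cdots Z_n$, and match the recursions for Boolean cumulants term by term. Parts (i)–(ii) are the free case from \cite{LehnerSzpojankowski:2023:freeint1}. I would reprove them in a form that uses only property (CAC) and, for (ii), the VNRP formula. Part (iii) then follows by feeding in Theorem~\ref{thm:VNTPCfree} instead of Theorem~\ref{thm:vnrp}.

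\emph{Step 1 (part (i)).} Let $W=U_1\cdots U_m$ be the block factorization. If $W$ does not start and end with $X$, both sides vanish: for the right-hand sides this uses the support of $\fbetas{\psi}{X}$ and $\bbetas{\psi}{X}$. If $W=XVX$, apply Corollary~\ref{cor:RecursiveBoolProd} to $\beta_m(U_1,\dots,U_m)$. The first factor $\beta_j(Z_1,\dots,Z_j)$ is a letter-wise cumulant starting with $X$, so by (CAC) only cuts with $Z_j=X$ survive. The second factor is the block cumulant of the remaining word $Z_{j+1}\cdots Z_n$ (the partial block it starts with is again a block). By (CAC) this factor is nonzero only if the remainder starts with $X$, or is empty, in which case it equals $1$. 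These surviving cuts are exactly the terms of $\ldelta_X W=\sum W_1\otimes W_2$ (cut right after an $X$ letter), evaluated with $\fbetas{\psi}{X}\otimes\bbetas{\psi}{X}$. The $\epsilon(P)$ term handles $W=1$. The $\rdelta_X$ version is the mirror recursion, obtained from the deconcatenation recurrence read from the right. Since only (CAC) was used, and (CAC) holds for $\varphi$ by Remark and Definition~\ref{remdef:fCAC}, this gives the first claim of (iii).

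\emph{Step 2 (parts (ii) and (iii)).} For $W=XVX$, write $\fbetas{\varphi}{X}(W)=\beta^\varphi_n(Z_1,\dots,Z_n)$ and expand it by Theorem~\ref{thm:VNTPCfree}; in the $\psi$ case use its special case $\varphi=\psi$. Each irreducible VNRP partition $\pi$ has a unique outer block $V_o=\{i_1=1<\dots<i_k=n\}$, coloured $X$, contributing $\beta^\varphi_k(X)$. Group the sum by $V_o$. Choosing $V_o$ corresponds exactly to one term $W=W_0\otimes W_1\otimes\cdots\otimes W_k$ of $\partial_X^k W$ with $W_0=W_k=1$, which is why the outer factors are $\epsilon$ and so only terms with empty first and last factors survive. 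The remaining sum over the restrictions of $\pi$ to the gaps $W_1,\dots,W_{k-1}$ factorizes. The claim then reduces to the identity
\[
  \bbetas{\psi}{Y}(G)=\sum_{\sigma}\beta^{\psi}_\sigma(G),
\]
where $\sigma$ runs over VNRP noncrossing colour-respecting partitions of the gap word $G$ all of whose outer blocks are $Y$-coloured; for empty $G$ both sides equal $1$. The VNRP condition forces every outer block of the restriction to be $Y$-coloured, since it sits immediately inside an $X$ block. All inner blocks are weighted by $\beta^\psi$, so $\varphi$ enters only through $\beta^\varphi_k(X)$. This explains why (ii) and (iii) have the same shape.

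\emph{Step 3 (the main obstacle: the gap identity).} A gap may consist of several adjacent $Y$-outer irreducible components, for example $Y\cdots Y\,Y\cdots Y$. An $X$ letter not covered by a $Y$ block would create a forbidden $X$-inside-$X$ nesting. So I would prove the identity by induction on length. First expand $\beta^\psi_m(U_1,\dots,U_m)$ of the gap's block factorization via Proposition~\ref{prop:boolprod} into interval partitions $\pi$ with $\pi\vee\rho=1$. Then expand each letter-wise factor by the $\psi$-VNRP formula, which is Theorem~\ref{thm:VNTPCfree} with $\varphi=\psi$. Finally, check that this is a bijection onto VNRP partitions with $Y$-coloured outer blocks. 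The condition $\pi\vee\rho=1$ is what glues consecutive $Y$-components within one $Y$-block $U_i$. Equivalently, one could run Step 1's recursion on both sides and compare. I expect the bookkeeping of this bijection to be the only delicate point.
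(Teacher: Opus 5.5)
Your proposal is correct and follows essentially the same route as the paper. Part (i), for $\psi$ and $\varphi$ alike, comes from the recursive Leonov--Shiryaev formula together with (CAC). Parts (ii)--(iii) come from grouping the VNRP expansion of Theorem~\ref{thm:VNTPCfree} by the unique outer block, with $\beta^\varphi$ on that block and $\beta^\psi$ on all inner blocks. The paper leaves the gap bookkeeping to \cite[Theorem~5.5]{LehnerSzpojankowski:2023:freeint1}, whereas you carry it out through Proposition~\ref{prop:boolprod}. That bijection works directly and needs no induction, because for a gap word starting with $Y$ the condition $\overline{\sigma}\vee\rho=1$ holds exactly when all outer blocks of $\sigma$ are $Y$-coloured.
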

\begin{proof}
  The statement concerning free variables follows from \cite[Theorem
  5.5]{LehnerSzpojankowski:2023:freeint1}. The proof for $c$-free variables
  follows the same arguments. Using the fact that the VNRP also holds
  for the $c$-free case as proved in Theorem~\ref{thm:VNTPCfree}. In
  particular, \eqref{eq:intro_betas_prod_as_entries} is a simple consequence
  of the Leonov-Shiryaev formula \eqref{eq:BoolProd}
  for $\beta^\varphi_n$, noticing that the extra
  terms vanish by Property (CAC) from Corollary~\ref{remdef:fCAC}. Equation
  \eqref{eq:ProductsasEntriesCFree} can be obtained by the same argument as in
  \cite[Theorem 5.5]{LehnerSzpojankowski:2023:freeint1}, except that, in 
  identity \eqref{eq:VNRP:cfree} of Theorem~\ref{thm:VNTPCfree}, the unique
  outer block is evaluated using $\beta^\varphi_n$ and the inner blocks are
  evaluated using $\beta^\psi_n$. 
\end{proof}
		
\subsection{Amplification to matrices}
Recall that the \emph{amplification} of a linear map $f:\alg{A}\to \alg{B}$
is the linear map
$f^{(N)}:= \id_{M_N}\otimes f:M_N(\alg{A})\to M_N(\alg{B})$,
i.e., the entry-wise application $f^{(N)}([a_{ij}]_{ij}) = [f(a_{ij})]_{ij}$.
This is then a matrix bimodule map in the sense that
\[
  f^{(N)}(UAV) = Uf^{(N)}(A)V
\]
for any $A\in M_N(\IC\langle X,Y\rangle)$ and scalar matrices
$U\in M_{k\times N}(\mathbb{C})$ and $V\in M_{N\times k}(\IC)$.
As discussed in \cite{LehnerSzpojankowski:2023:freeint1} the functionals
and derivations introduced above have amplifications to noncommutative
polynomials with matrix coefficients.
In particular,
$(M_N(\IC\langle X,Y\rangle),\psi^{(N)})$
is  a matrix-valued noncommutative probability space
and we will
identify $M_N(\IC\langle X,Y\rangle)
\cong M_N(\mathbb{C})\otimes \IC\langle X,Y\rangle$
and write  $AX$ resp.\ $BY$ for $A\otimes X$ resp.\
$B\otimes Y$.
 Observe that freeness of $X$ and $Y$ implies that for any $A,B\in
 M_{N\times N}(\mathbb{C}) $ the elements $AX$ and $BY$ are free
 with amalgamation over $M_{N}(\mathbb{C})$ in the matrix-valued probability
 space $(M_N(\IC\langle X,Y\rangle),\psi^{(N)})$.

When it is clear from the context that we are working in a matrix-valued
probability space, we will omit the superscript and write
$\varphi,\psi,\bbeta{X}$ and $\fbeta{X}$ for the matrix-valued functionals and
$I$ for the identity matrix $I_N\in M_N(\mathbb{C})$. 
		
\begin{remark}
  In the matrix-valued probability space $(M_N(\IC\langle X,Y\rangle),\psi^{(N)})$ for an element $A\in M_N(\IC\langle X,Y\rangle)$, we define its moment transform for $B\in M_N(\IC)$ by
  \[M_A^\psi(B)=I_N+\sum_{n=1}^\infty \psi^{(N)}\left((AB)^n\right),\]
  and similarly
  \[\eta_A^{\psi}(B)=\sum_{n=1}^\infty \beta_n^{\psi,(N)}\left((AB)^n\right).\]
  We still have
  \[M_A^\psi(B)=\left(I_N-\eta_A^{\psi}(B)\right)^{-1}.\]
  In what follows we will always assume that $B=zI_N$ for $z\in \mathbb{C}$.
\end{remark}
		
\begin{remnot}
  \begin{enumerate}[label=\arabic*., leftmargin=2em]
   \item []
   \item 
  The derivations $\partial_X,\rdelta_X, \ldelta_X$ introduced in the previous
  section can also be extended to the matrix-valued framework (see
  \cite[Cor.~6.6]{LehnerSzpojankowski:2023:freeint1}). We apply the respective
  derivations entry-wise to a matrix with polynomial coefficients.
The  Leibniz rule for the derivations from   \eqref{def:derivations}
  takes the   form 
  \[
    D(a\cdot b)=D(a) \cdot (1\otimes b)+(a\otimes 1) \cdot D(b),
  \]
  and  the amplified version is
  \[
    D^{(N)}(A\cdot B)=D^{(N)}(A) \cdot (1\odot B)+(A\odot 1) \cdot D^{(N)}(B),
  \]
  where for a matrix $A\in M_N(\mathbb{C})$ with entries $A=[a_{ij}]_{ij}$ 
  we denote $1\odot A=[1\otimes a_{ij}]_{ij}$
  and 
  $A\odot 1=[ a_{ij} \otimes 1]_{ij}$.
 \item    A simple argument using Leibniz' rule shows that for invertible matrices
  \begin{equation}
    \label{eq:DA-1}
    D^{(N)}(A^{-1}) = -(A^{-1}\odot 1) D^{(N)}(A) (1\odot A^{-1})
    .
  \end{equation}
 \item
  More generally, we define $A\odot B= (A\odot 1)\cdot(1\odot B)$, i.e.,
  \[
    (A\odot B)_{ij}=\sum_{k=1}^N a_{ik} \otimes b_{kj}.
  \]
 \item 
  The entry-wise application of linear functionals then turns $\odot$ into a simple
  multiplication of scalar   matrices:
  $$
  (f\otimes g)^{(N)} (A\odot B) = f^{(N)}(A)\cdot g^{(N)}(B)
  $$
  and the formulas
  \eqref{eq:intro_betas_prod_as_entries}--\eqref{eq:ProductsasEntriesCFree}
  from  Proposition~\ref{thm:FIC5.5} remain valid after amplification to the
  matrix valued case.
\end{enumerate}

\end{remnot}

\subsection{Matrix-valued $c$-free additive convolution}

Assume that we have a polynomial $P\in \mathbb{C}\langle X,Y\rangle$ of degree 
$m$ together with a linearization of its resolvent:
$$
(1-z^mP)^{-1} = u^t(I_N-zL)^{-1}v,
$$
where $u,v\in \mathbb{C}^N$ and $L=A X+BY$. Here $(1-z^mP)^{-1}=\sum_{n=0}^{\infty} (z^mP)^n$ is a formal power series with coefficients in $\mathbb{C}\langle X,Y\rangle$ and similarly $(I_N-zL)^{-1}$ is a formal power series with coefficients in $M_N(\mathbb{C}\langle X,Y\rangle)$. 
Then
\[\varphi\left((1-z^m
    P)^{-1}\right)=u^t\varphi^{(N)}\left((I_N-zL)^{-1}\right)v
\]
and since 
the moment generating function uniquely determines the distribution of $P$, the
problem of computing the distribution of a polynomial $P$ boils down to
matrix-valued additive convolution.
This is done via the matrix valued Boolean cumulant generating functions which
satisfy the fixed point equation featuring in the following Proposition.

\begin{proposition}
  \label{lem:SeveralEquations}
  Assume that $X$ and $Y$ are $c$-free in the two-state noncommutative
  probability space $(\mathbb{C}\langle X,Y\rangle,\varphi,\psi)$.
  Given matrices  $A,B\in M_N(\mathbb{C})$,
  denote by $\bPsi = \bigl(I - z(AX+BY)\bigr)^{-1}$ the resolvent and let
  \begin{align}
    H_X & :=\bbetas{\psi}{X}(\bPsi),
    &
      F_X & :=\fbetas{\psi}{X}(X\bPsi), \\
    H_Y & :=\bbetas{\psi}{Y}(\bPsi),
    &
      F_Y & :=\fbetas{\psi}{Y}(Y\bPsi) ,
  \end{align}
  \begin{align}
    F_X^\varphi & :=\fbetas{\varphi}{X}(X\bPsi), \\
    F_Y^\varphi & :=\fbetas{\varphi}{Y}(Y\bPsi).
  \end{align}
  Then the following identities hold:
  \begin{align}
    H_X &=  (I-zAF_X)^{-1},\label{eq:HAdditive}\\
    H_Y &= (I-zBF_Y)^{-1},\label{eq:HYAdditive}\\
    F_X &= \tilde{\eta}_X^{\psi}\left(z H_Y A \right),\label{eq:Fx=betasubordination}\\
    F_Y &= \tilde{\eta}_Y^{\psi}\left(zH_X B \right).
  \end{align}
  \begin{align}
    F_X^\varphi &= \tilde{\eta}^\varphi_X(zH_YA),\label{eq:Fxphi=betasubordination}\\
    F_Y^\varphi &= \tilde{\eta}^\varphi_Y(zH_XB)\label{eq:Fyphi=betasubordination}.
  \end{align}
\end{proposition}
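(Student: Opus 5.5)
The first four identities concern only $\psi$. There $X$ and $Y$ are free, so they are the matrix-valued free-case identities of \cite{LehnerSzpojankowski:2023:freeint1}. I would nevertheless rederive them, because the $\varphi$-identities follow by the same computation. The plan is to apply the amplified versions of Proposition~\ref{thm:FIC5.5} to the resolvent $\bPsi$, using the fact that the derivations act on a resolvent in closed form.

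\textbf{Derivatives of the resolvent.} By \eqref{eq:DA-1} with $D=\partial_X$ and $\partial_X(I-z(AX+BY)) = -z(A\odot 1)(1\odot 1)$ entrywise (with $A$ scalar), we get
\[
\partial_X\bPsi = z(\bPsi A\odot \bPsi).
\]
Iterating with coassociativity gives
\[
\partial_X^k\bPsi = z^k\,\bPsi A\odot\bPsi A\odot\cdots\odot\bPsi A\odot\bPsi \qquad (k+1 \text{ factors}).
\]
Similarly
\[
\ldelta_X\bPsi = z\,\bPsi AX\odot\bPsi = (\bPsi-I)\odot\bPsi \text{ restricted to the }X\text{-part},
\]
but the cleanest form is $\ldelta_X\bPsi = z(\bPsi A X\odot\bPsi)$.

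\textbf{The equation for $H_X$.} Apply the first line of \eqref{eq:intro_betas_prod_as_entries}, amplified, to $P=\bPsi$. Since $\epsilon(\bPsi)=I$,
\[
H_X = I + zF_X\cdot A\cdots
\]
Here one must be careful with the order: $(\fbetas{\psi}{X}\otimes\bbetas{\psi}{X})(z\bPsi AX\odot\bPsi)$. Rather, I would use the right version, $\rdelta_X\bPsi = z(\bPsi A\odot X\bPsi)$, which gives
\[
H_X = I + z\,\bbetas{\psi}{X}(\bPsi)A\,\fbetas{\psi}{X}(X\bPsi) = I + zH_XAF_X.
\]
Solving yields \eqref{eq:HAdditive}, and symmetrically \eqref{eq:HYAdditive}.

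\textbf{The subordination equations.} Apply \eqref{eq:IntroVNRP_for_betas:uni} to $P=X\bPsi$. Since $\epsilon(X\bPsi)=0$, only the sum remains. By Leibniz,
\[
\partial_X^k(X\bPsi) = (1\odot X\bPsi)\text{-type terms} + \ldots
\]
Concretely, $\partial_X(X\bPsi) = 1\odot\bPsi + zX\bPsi A\odot\bPsi$. Applying $\epsilon$ to the first tensor leg kills every term whose first leg starts with $X$. Hence effectively
\[
(\epsilon\otimes\cdots)\partial_X^k(X\bPsi) = (\epsilon\otimes\cdots)\bigl(1\odot\partial_X^{k-1}\bPsi\bigr) = z^{k-1}\,(\bPsi A)^{\odot(k-1)}\odot\bPsi.
\]
Evaluating with $\bbetas{\psi}{Y}$ on the middle legs and $\epsilon$ on the last leg turns each middle factor into $H_YA$, because $\bbetas{\psi}{Y}$ is applied to $\bPsi$ and the scalar $A$ factors out by the bimodule property. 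The last factor becomes $\epsilon(\bPsi)=I$. One subtlety needs checking: $\bbetas{\psi}{Y}$ applied to a middle leg $\bPsi$ gives $H_Y$ rather than something restricted to pure-$Y$ words, but that is exactly the definition of $H_Y$. This yields
\[
F_X = \sum_k \beta_k^\psi(X)\,(zH_YA)^{k-1} = \tilde\eta_X^\psi(zH_YA).
\]
In the matrix case the scalar cumulant $\beta_k(X)$ multiplies the matrix products. The same computation using \eqref{eq:ProductsasEntriesCFree} in place of \eqref{eq:IntroVNRP_for_betas:uni} gives \eqref{eq:Fxphi=betasubordination}. Notably, the middle legs still carry $\bbetas{\psi}{Y}$, hence $H_Y$ and not a $\varphi$-quantity. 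Swapping the roles of $X$ and $Y$ gives the remaining identities.

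\textbf{Main obstacle.} The bookkeeping of the iterated derivative $\partial_X^k(X\bPsi)$ under $\epsilon\otimes\cdots\otimes\epsilon$ is the step I expect to need the most care. The point to verify is that only the term in which the leading $X$ is differentiated first survives, and that the ordering of the matrix factors $H_Y$ and $A$ comes out as stated. I would do this by induction on $k$ using coassociativity.
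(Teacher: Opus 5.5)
Your proposal is correct and follows essentially the same route as the paper. The $H$-identities come from the right-hand version of \eqref{eq:intro_betas_prod_as_entries} applied to $\rdelta_X\bPsi = z\bPsi A\odot X\bPsi$. The $F$-identities come from inserting $\partial_X^k(X\bPsi)$ into \eqref{eq:IntroVNRP_for_betas:uni}, respectively \eqref{eq:ProductsasEntriesCFree}; only the term $z^{k-1}\,1\odot(\bPsi A)^{\odot(k-1)}\odot\bPsi$ survives $\epsilon$ on the first leg. You should delete the stray aside identifying $\ldelta_X\bPsi$ with $(\bPsi-I)\odot\bPsi$, which is incorrect and which you never use.
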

\begin{remark}
  In Proposition~\ref{lem:SeveralEquations}
  we have assumed for the sake of simplicity that
  the coefficient matrices $A$ and $B$ are scalar matrices.
  In practice the linearization of nonhomogeneous polynomials
  leads to matrices $A$ and $B$ with nonlinear dependence on $z$,
  however it is easy to see that all formulas remain valid.
\end{remark}
\begin{remark}
  The matrices
  \begin{align*}
    H_X^\varphi & :=\bbetas{\varphi}{X}(\bPsi), 
    \\
    H_Y^\varphi & :=\bbetas{\varphi}{Y}(\bPsi), 
  \end{align*}
  satisfy the identities
  \begin{align}
    \label{eq:HXphi=1-zBFXphi}
    H_X^\varphi &=  (I-zAF_X^\varphi)^{-1},\\
    \label{eq:HYphi=I-zAFXphi}
    H_Y^\varphi &=  (I-zBF_Y^\varphi)^{-1},
  \end{align}
  similar to the identities   \eqref{eq:HAdditive}
  and   \eqref{eq:HYAdditive} satisfied by $H_X$ and $H_Y$,
  however they are not needed because we can obtain
  the matrices $F^\varphi_X$ and
  $F^\varphi_Y$ directly from  on $H_X$ and $H_Y$ via formulas
  \eqref{eq:Fxphi=betasubordination} and \eqref{eq:Fyphi=betasubordination}.
  In fact this was already observed in \cite {Belinschi:cfree} to the effect
  that the very same  subordination  functions underlying free additive
  convolution keep their role in the subordination approach to $c$-free
  additive convolution.
\end{remark}
\begin{proof}
For the reader's convenience, we present a proof of \eqref{eq:HAdditive} and
  \eqref{eq:Fxphi=betasubordination}.
  The proofs of the remaining statements are entirely similar.
  By definition of $\bPsi$ and $\rdelta_X$, we have
  $$
  \rdelta_X(\bPsi) = z\bPsi A \odot X\bPsi = z\bPsi \odot AX\bPsi.
  $$
  Then the matricial amplification of the Leonov-Shiryaev formula
  \eqref{eq:intro_betas_prod_as_entries}
  reads
  $$
  \beta^{b,\psi}_X(\bPsi) = I + z\beta_X^{b,\psi}(\bPsi)\beta_X^{\delta,\psi}(AX\bPsi),
  $$
  and hence
  $$
  H_X=\beta_X^{b,\psi}(\bPsi) = \left(I-zA\beta_X^{\delta,\psi}(X\bPsi)\right)^{-1} = (I-zAF_X)^{-1}.
  $$
  Finally, it is not difficult to show that
  \[
    \partial_X^n(X\bPsi) = z^{n-1} I\odot (\bPsi A)^{\odot n-1}\odot \bPsi + z^n X(\bPsi A)\odot (\bPsi A)^{\odot n-2}\odot \bPsi
  \]
  and after plugging this into the recurrence \eqref{eq:ProductsasEntriesCFree} we conclude
  \begin{align*}
    F_X^\varphi = \beta_X^{\delta,\varphi}(X\bPsi) &= \sum_{k=1}^\infty \beta_k^\varphi(X)z^{k-1} \beta^{b,\psi}_Y(\bPsi A)^{k-1}
    \\ &= \tilde{\eta}_X^{\varphi}\bigl( z \beta_Y^{b,\psi}(\bPsi)A\bigr).
  \end{align*}
\end{proof}

The solutions of the previous fixed point equations can now be used to
compute  distributions of polynomials in $c$-free variables via linearization
as shown in the next theorem.
\begin{theorem}
  \label{thm:cfreeConvolution}
  With the notation from Proposition~\ref{lem:SeveralEquations},
  assume that $X$ and $Y$ are $c$-free
  in the   two-state noncommutative probability space  $(\mathbb{C}\langle X,Y\rangle,\varphi,\psi)$.
  Assume moreover that the resolvent of a given polynomial $P \in \mathbb{C}\langle X,Y\rangle$ of degree $m$ has a linearization given by $\Psi = (1-z^mP)^{-1} = u^t\bPsi v$, where $\bPsi = \bigl(I-z(AX+BY)\bigr)^{-1}$, with $A,B\in M_N(\mathbb{C})$ and $u,v\in \mathbb{C}^N$. Then
  \begin{equation}
    M^\varphi_{AX+BY}(z):=\varphi(\bPsi)
    = (I-zAF^\varphi_X-zBF_Y^\varphi)^{-1}
    \label{eq:AX+BY:phi(Psi)}
  \end{equation} and
  consequently
  \[M^{\varphi}_{P(X,Y)}(z^m)=u^t M^\varphi_{AX+BY}(z) v.\]
\end{theorem}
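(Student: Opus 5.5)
We need to prove $\varphi(\bPsi) = (I - zAF^\varphi_X - zBF^\varphi_Y)^{-1}$. The plan is to go through the Boolean cumulant functional $\beta^{b,\varphi}$ rather than through moments directly. We use the matricial version of the Boolean moment--cumulant recursion \eqref{eq:recurrenceboolcum}. On the free algebra this reads $\varphi(P)=\epsilon(P)+(\fbetas{\varphi}{}\otimes\varphi)(\ldelta P)$, where $\ldelta=\ldelta_X+\ldelta_Y$ is the full left deconcatenation. This is just \eqref{eq:recurrenceboolcum} written letter-wise: splitting a word after its first $k$ letters gives $\beta_k^\varphi$ of the first $k$ letters times $\varphi$ of the rest.

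Applying this to the resolvent uses the formula from the proof of Proposition~\ref{lem:SeveralEquations}. Since $\bPsi=I+z(AX+BY)\bPsi$, we get
\[
\ldelta_X(\bPsi)=(\bPsi\odot 1)\,\ldelta_X\bigl(z(AX+BY)\bigr)(1\odot\bPsi)\quad\text{in the appropriate sense,}
\]
which yields $\ldelta_X\bPsi = zAX\bPsi\odot\bPsi$, and similarly for $Y$. Amplifying and using $(f\otimes g)^{(N)}(A\odot B)=f^{(N)}(A)g^{(N)}(B)$, the recursion becomes
\[
\varphi(\bPsi)=I+z\bigl(A\,\fbetas{\varphi}{}(X\bPsi)+B\,\fbetas{\varphi}{}(Y\bPsi)\bigr)\varphi(\bPsi).
\]
It remains to identify $\fbetas{\varphi}{}(X\bPsi)$ with $F^\varphi_X=\fbetas{\varphi}{X}(X\bPsi)$. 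By (CAC), which holds for $\varphi$ by Remark and Definition~\ref{remdef:fCAC}(ii), we have \eqref{eq:beta=betaX+betaY}: $\fbetas{\varphi}{}=\fbetas{\varphi}{X}+\fbetas{\varphi}{Y}-\epsilon$. Applied to $X\bPsi$, every monomial starts with $X$, so $\fbetas{\varphi}{Y}$ and $\epsilon$ vanish on it, and $\fbetas{\varphi}{}(X\bPsi)=F^\varphi_X$. The analogous statement holds for $Y$.

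Solving the linear relation, as a formal power series in $z$ with $I-zAF^\varphi_X-zBF^\varphi_Y$ invertible, gives \eqref{eq:AX+BY:phi(Psi)}. The formulas for $F^\varphi_X$ and $F^\varphi_Y$ are then supplied by \eqref{eq:Fxphi=betasubordination} and \eqref{eq:Fyphi=betasubordination}. The final claim follows from the bimodule property of $\varphi^{(N)}$:
\[
\varphi\bigl((1-z^mP)^{-1}\bigr)=u^t\varphi^{(N)}(\bPsi)v.
\]

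The main point requiring care is the matricial deconcatenation step: checking that $\ldelta_X(\bPsi)=zAX\bPsi\odot\bPsi$ with the correct placement of the scalar matrices, and that the matricial moment--cumulant recursion is legitimate entrywise. Both follow by expanding $\bPsi=\sum_n z^n(AX+BY)^n$ and splitting each word after its first block of letters, with $A$ and $B$ passing through the functionals by the bimodule property.
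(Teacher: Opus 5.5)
Your argument is correct in substance and is essentially the paper's. The paper computes $\eta^\varphi_{AX+BY}(z)=\beta^{\delta,\varphi}(\bPsi)-I=\beta^{\delta,\varphi}(zAX\bPsi+zBY\bPsi)=zAF^\varphi_X+zBF^\varphi_Y$ using (CAC) in the form \eqref{eq:beta=betaX+betaY}, and then quotes $M=(I-\eta)^{-1}$. You instead re-derive that last identity from the letter-wise recursion \eqref{eq:recurrenceboolcum} applied to $\bPsi$.

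One intermediate formula is wrong, however. Cutting $(AX+BY)^n$ immediately after each occurrence of $X$ gives $\ldelta_X\bPsi=z\bPsi AX\odot\bPsi$, not $zAX\bPsi\odot\bPsi$. The correct form is also what your own expression $(\bPsi\odot 1)\,\ldelta_X\bigl(z(AX+BY)\bigr)(1\odot\bPsi)$ evaluates to. Your $zAX\bPsi\odot\bPsi$ is instead the part of the full deconcatenation coming from words whose first letter is $X$. Already at order $z^2$, with $N=1$, $A=a$, $B=b$, the true $\ldelta_X\bPsi$ contains $ab\,YX\otimes 1$ where your formula has $ab\,XY\otimes 1$.

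The error is harmless because you only use the sum. Since both $\bPsi(AX+BY)$ and $(AX+BY)\bPsi$ equal $(\bPsi-I)/z$, we have $\ldelta\bPsi=z\bPsi(AX+BY)\odot\bPsi=z(AX+BY)\bPsi\odot\bPsi$. Hence your recursion $\varphi(\bPsi)=I+z\bigl(A\beta^{\delta,\varphi}(X\bPsi)+B\beta^{\delta,\varphi}(Y\bPsi)\bigr)\varphi(\bPsi)$ is correct, and the rest of your argument stands.
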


\begin{proof}
By the definition of the resolvent, observe that
  \begin{equation}
    \label{eq:resolventIdentity}
    \bPsi = I + zAX\bPsi + zBY\bPsi.
  \end{equation}
  Then, we have\begin{align*}
    \eta^\varphi_{AX+BY}(z)
    &=\sum_{n=1}^{\infty}\beta_n^{\varphi}(AX+BY) z^n\\
    &=\sum_{n=1}^{\infty}\beta^{\delta,\varphi}\left((AX+BY)^n\right) z^n\\
    &= \beta^{\delta,\varphi}(\bPsi) - I\\
    &= \beta^{\delta,\varphi}(zAX\bPsi+zBY\bPsi)\\
    &= zA\beta_{X}^{\delta,\varphi}(X\bPsi)+zB\beta_{Y}^{\delta,\varphi}(Y\bPsi)\\
    &= zAF^\varphi_X+zBF^\varphi_Y,
  \end{align*}
  where we simply use the definition of $\eta^\varphi$,
  properties of $\beta^\delta$ and
\eqref{eq:beta=betaX+betaY}.
  The result follows from the identity $M^\varphi(z)=(I-\eta^\varphi(z))^{-1}$.
\end{proof}
		
\section{Applications to free denoising}
\label{sec:denoising}
We will now apply the above result to the problem of free denoising. First, we briefly recall the framework of free denoising. 

\subsection{Review of free denoising}
Consider a tracial $W^*$-probability space $(\mathcal{M},\psi)$. Fix two
self-adjoint elements $a,b\in \mathcal{M}$. We view $a$ as a signal and $b$ as
a noise. For a self-adjoint polynomial $P\in\mathbb{C}\,\langle X,Y\rangle$ we
consider the element $P(a,b)$, which represents a noisy element. Our goal is to
recover the signal $a$ from the noisy element $P(a,b)$, and we observe that
$\E[a|P(a,b)]$ solves this problem in the sense of $L^2$ distance, i.e.,
$\lVert a-\E[a|P(a,b)]\rVert_2\leq \lVert a-g(P(a,b))\rVert_2$ among all bounded
Borel functions $g$. We will work with a slightly more general problem and we
will consider $\E[f(a)|P(a,b)]$. The crucial observation from \cite{FevrierNicaSzpojankowski:2024} is that $c$-freeness provides a framework to find $\E[f(a)|P(a,b)]$. Fix a non-negative function $f$ such that $\psi(f(a))=1$ and define a new state $\varphi(c)=\psi(f(a)c)$. The main observations from \cite{FevrierNicaSzpojankowski:2024} are the following:
\begin{enumerate}[label=\arabic*.]
 \item The elements $a,b$ are $c$-free with respect to the pair $(\varphi,\psi)$.
 \item Given the distributions of $P$ with respect to both states
  $\mu_{P(a,b)}^{\psi}$ and $\mu_{P(a,b)}^{\varphi}$, the measure
  $\mu_{P(a,b)}^{\varphi}$ is absolutely continuous with respect to
  $\mu_{P(a,b)}^{\psi}$, and therefore there exists the Radon-Nikodym
  derivative $\tfrac{d \mu_{P(a,b)}^{\varphi}}{d \mu_{P(a,b)}^{\psi}}$ which we
  denote by $h$. 
 \item The Radon-Nikodym derivative above coincides with the conditional
  expectation
  in the sense that
  \[
    \E[f(a)|P(a,b)]=h(P(a,b)).
  \]
\end{enumerate}
This reduces the problem of free denoising to the computation of the
distributions of
polynomials in
free variables and $c$-free variables. In \cite{FevrierNicaSzpojankowski:2024}
this general framework is provided, however it was not possible to apply it
beyond addition and multiplication of variables for lack of a general
method to determine distributions of polynomials in $c$-free
variables. Theorem~\ref{thm:cfreeConvolution} fills this gap.

\subsection{A worked out example}
Given a noncommutative probability space $(\alg{M},\psi)$,
let $a$ be a standard semicircular element and
$b$ free from $a$ with distribution $\frac{1}{2}(\delta_{-1}+\delta_1)$.
In this subsection we present a worked out example which shows how to find for fixed $-1/2<s<1/2$ the conditional expectation
\[
  \E\bigl[(1-sa)^{-1} \big| i[a,b]\bigr],
\]
where $i[a,b] = i(ab - ba)$ is the commutator of $a$ and $b$. Define a state
\[\varphi(c):=\frac{\psi\left((1-sa)^{-1}c\right)}{M_a^{\psi}(s)}.\]
As we observed above, we need to determine the distribution of $i[a,b]$ with
respect to both states $\psi$ and $\varphi$.
As discussed in Subsection~\ref{ssec:nc_polynomials},
we will work on the free algebra $\left(\C\langle
  X,Y\rangle,\varphi_{a,b},\psi_{a,b} \right)$
where we have $\varphi_{a,b}(R(X,Y))=\varphi(R(a,b))$ and
$\psi_{a,b}(R(X,Y))=\psi(R(a,b))$
and after fixing $a$ and $b$ we will simply write $\varphi, \psi$
instead of $\varphi_{a,b},\psi_{a,b}$, which should not lead to any confusion.

For $z$ in some neighbourhood of zero we have
\begin{align*}
  M_X^{\varphi}(z)&=\varphi\left((1-zX)^{-1}\right)=\frac{\psi\left((1-sX)^{-1}(1-zX)^{-1}\right)}{M_X^{\psi}(s)}\\
                  &=\frac{\psi\left(\frac{s}{1-s X}-\frac{z}{1-z X}\right)}{(s-z)M_X^{\psi}(s)}=\frac{sM_X^\psi(s)- zM_X^\psi(z)}{(s-z)M_X^\psi(s)},
\end{align*}
or,  equivalently,
\begin{align}\label{eqn:61}
  \tilde{\eta}^{\varphi}_X(z)=\frac{M^{\psi}_X(s)-M_X^{\psi}(z)}{sM^{\psi}_X(s)-zM_X^{\psi}(z)}.
\end{align}
The linearization is
\[\Psi(z^2)=(1-z^2 i[X,Y])^{-1}=\left[(1-z C_X X-z C_Y Y)^{-1}\right]_{1,1} \]
with
\begin{align*}
  C_X=
  \begin{bmatrix}
    0&0&i\\
    1&0&0\\
    0&0&0
  \end{bmatrix}
         \qquad
         C_Y=
         \begin{bmatrix}
           0&-i&0\\
           0&0&0\\
           1&0&0
         \end{bmatrix}
\end{align*} 
and  we can construct the system of equations for matrices $F_X,F_Y,F_X^{\varphi}, F_Y^{\varphi}$ as follows:
\begin{align*}
  \begin{cases}
    H_X&=(I-z C_X F_X)^{-1},\\
    H_Y&=(I-z C_Y F_Y)^{-1},\\
    F_X&=\tilde{\eta}^{\psi}_X(z H_Y C_X),\\
    F_Y&=\tilde{\eta}^{\psi}_Y(z H_X C_Y),\\
    F_X^{\varphi}&=\tilde{\eta}_X^{\varphi}(z H_Y C_X),\\
    F_Y^{\varphi}&=\tilde{\eta}_Y^{\varphi}(z H_X C_Y).\\
  \end{cases}
\end{align*}
Note that by the freeness assumption, the distributions of  $Y$ with respect to
$\varphi$ and $\psi$ coincide and hence
$\tilde{\eta}^{\varphi}_Y(z H_X C_Y)=\tilde{\eta}^{\psi}_Y(z H_X C_Y)=zH_X
C_Y$,
where the latter equality follows from the fact that $b$ has the Bernoulli
distribution.
As a consequence $F_Y^{\varphi}=F_Y$ as well.

Since $a$ was assumed to be the standard semicircular element, its moment generating function satisfies the equation \[M_X^{\psi}(z)=z^2 M_X^{\psi}(z)^2+1.\]
For the $\eta$-transform we get $\eta_X^\psi(z)=1-1/M_X^\psi(z)$. This implies that $\tilde{\eta}_X^{\psi}=z M_X^{\psi}(z)$. This together with \eqref{eqn:61} implies that
\[
  \tilde{\eta}^{\varphi}_X(z)=\frac{s^2 M_X^{\psi}(s)^2-z^2 M_X^{\psi}(z)^2}{s
    M_X^{\psi}(s)-z M_X^{\psi}(z)}=s M_X^{\psi}(s)+z
  M_X^{\psi}(z)=\tilde{\eta}_X^{\psi}(s)+\tilde{\eta}_X^{\psi}(z)
\]
and finally
\[F_X^{\varphi}=\tilde{\eta}_X^{\psi}(s)I+\tilde{\eta}_X^{\psi}(z H_Y C_X)=\tilde{\eta}_X^{\psi}(s)I+F_X.\]
 In particular we have
\[\eta_{C_X X+C_Y Y}^{\varphi}(z)=z C_X F_X^{\varphi}+z C_Y F_Y^{\varphi}=z C_X \tilde{\eta}^\psi_X(s)+\eta_{C_X X+C_Y Y}^{\psi}(z)\]
and solving this system (and noticing that the involved moment and $\eta$-transforms have analytic continuation to the upper half-plane), for $z\in \IC^+$ we get
\[G^{\varphi}_{i[a,b]}(z)=G^{\varphi}_{i[X,Y]}(z)=\frac{(2+\tilde{\eta}_X^2(s))\sqrt{z^2-8}+(\tilde{\eta}_X^2(s)-2)z}{2\tilde{\eta}_X^2(s) z^2-2(\tilde{\eta}_X^2(s)+2)^2 }.\]
Stieltjes inversion formula and a direct calculation gives 
\[d\mu^{\varphi}_{i[a,b]}(x)=-\frac{1}{\pi}\frac{(2+\tilde{\eta}_X^2(s))\sqrt{8-x^2}}{2\tilde{\eta}_X^2(s) x^2-2(\tilde{\eta}_X^2(s)+2)^2}.\]
Since it is well known that the distribution of the commutator of a semicircular
variable and a Bernoulli variable is equal to the free convolution of two semicircle distributions of variance one, we obtain the density $\tfrac{1}{4 \pi}\sqrt{8-x^2}$.
We finally obtain the Radon-Nikodym derivative and the conditional expectation
\[h(t)=M_a^\psi(s)\frac{d\mu^{\varphi}_{i[a,b]}}{d\mu^{\psi}_{i[a,b]}}=\frac{2(2+\tilde{\eta}_X^2(s))}{(1-s\tilde{\eta}_X(s))\left((2+\tilde{\eta}_X(s))^2-\tilde{\eta}_X^2(s) t^2\right)}.\]
Extracting the first few coefficients from the generating function we get
\begin{align*}
  \E[a^2|i[a,b]]&=\frac{(i[a,b])^2+2}{4},\\
  \E[a^4|i[a,b]]&=\frac{(i[a,b])^4+6(i[a,b])^2+12}{16}.
\end{align*}
		
\section{Algebraic conditional expectations of $c$-free variables}\label{sec:6}
\subsection{Conditional expectations}
In this section, we study formal analogues of conditional expectations for
$c$-free random variables.
It turns out that after some modifications the algebraic machinery from
\cite{LehnerSzpojankowski:2023:freeint1} still works in the $c$-free setting,
and we obtain projections onto subalgebras which formally
act like conditional expectations,
but lack important analytic properties like positivity.
Yet they are useful for practical computations and as an example
in Section~\ref{sec:sigmatransform} we recompute the
$c$-free analog of the $\Sigma$-transform  from \cite{PopaWang:2011:multiplicative}
by purely algebraic means.

Recall that a \emph{conditional expectation}
is a state-preserving projection $\E_{\alg{A}}:\alg{M}\to \alg{A}$
from a noncommutative probability space $\alg{M}$ onto a subalgebra $\alg{A}$
such that in addition
for every $u\in\alg{M}$ and
every $a_1,a_2\in\alg{A}$
\begin{align}
  \label{eq:phiua=phiEua}
  \varphi(a_1u)&=\varphi(a_1\E_{\alg{A}}[u]),
  &
    \varphi(ua_2)
  &=\varphi(\E_{\alg{A}}[u]a_2)
    .
\end{align}
In general such a map does not necessarily exist.
A sufficient condition is that $\alg{M}$ is a finite
von Neumann algebra and $\varphi$ is tracial,
i.e.~$\varphi(ab) =\varphi(ba)$ for all $a,b\in \alg{M}$
\cite[Proposition~5.2.36]{Takesaki1};
for a characterization in the general case see \cite {Takesaki:1972}.
If it does exist and the state $\varphi$ is faithful, then the conditional expectation
$\E_{\alg{A}}[u]$ is uniquely determined by the invariance property \eqref{eq:phiua=phiEua}
and it follows that
$\E_{\alg{A}}:\alg{M}\to\alg{A}$ is a unital completely positive $\alg{A}$-bimodule map, i.e.,
\begin{equation}
  \label{eq:modularproperty}
  \E_{\alg{A}}[a_1ua_2]=a_1\E_{\alg{A}}[u]a_2 
\end{equation}
for all $u\in\alg{M}$ and $a_1,a_2\in\alg{A}$.

A note on notation:
The standard notation for the conditional expectation onto a subalgebra
$\alg{A}$ is $\E_{\alg{A}}$ and when the subalgebra  $\alg{A}$ is generated
by a single element $a$ we also write $\E_{a}$ 
Occasionally we write the latter also as $E[x|a]=E_a[x]$,
for example in Section~\ref{sec:denoising} in order to avoid clumsy expressions like
$\E_{i[a,b]}$.

\subsection{Left and right quasi-conditional expectations}
\label{ssec:ExistenceCondExp}
From a technical point of view,
the computation of the conditional expectation
of an element $u\in\alg{M}$
onto a subalgebra $\alg{A}\subseteq\alg{M}$
boils down to the construction
of an element $E_{\alg{A}}[u]\in \alg{A}$ which satisfies
the conditions \eqref{eq:phiua=phiEua}.
This amounts to the computation of the orthogonal projection
$L^2(\alg{M},\varphi)\to L^2(\alg{A},\varphi)$.

Let $(\alg{M},\varphi)$ be a $W^*$-probability space with faithful state $\varphi$, $\alg{A}\subseteq\alg{M}$
a subalgebra and $L^2(\alg{M})_r$ and $L^2(\alg{A})_r$  the right $L^2$-spaces,
i.e., the respective completions
of $\alg{M}$ and $\alg{A}$ with respect to the scalar product
\begin{equation*}
  \langle x,y\rangle  = \varphi(xy^*)
  .
\end{equation*}
Then the orthogonal projection $\rE_{\alg{A}}:L^2(\alg{M},\varphi)_r\to L^2(\alg{A},\varphi)_r$ exists
and satisfies the right module property $\rE_{\alg{A}}[ua]=\rE_{\alg{A}}[u]a$
for $u\in\alg{M}$ and  $b\in\alg{A}$.
Similarly,
the left Hilbert spaces $L^2(\alg{M},\varphi)_l$ and $L^2(\alg{A},\varphi)_l$
are obtained from the scalar product
\begin{equation*}
  \langle x,y\rangle  = \varphi(y^*x)
\end{equation*}
and the projection  $\lE_{\alg{M}}:L^2(\alg{M},\varphi)_l\to L^2(\alg{A},\varphi)_l$ satisfies
the left module property $\lE_{\alg{A}}[au]=a\lE_{\alg{B}}[u]$.
Note that it is not necessarily true
that  the restriction of a map  $F:L^2(\alg{M})_r\to L^2(\alg{A})_r$
to the subspace $\alg{A}\subseteq L^2(\alg{M})_r$ automatically
gives rise to a map $F:\alg{M}\to\alg{A}$.
In the $c$-free setting below both $\alg{M}$ and $\alg{A}$ will be algebras of
polynomials and we will show that the projections
indeed restrict to mappings from $\alg{M}$ to $\alg{A}$.
We will call these mappings \emph{left} and \emph{right quasi-conditional expectations}.

Another issue to note here is faithfulness: if the state $\varphi$ is not faithful,
then the scalar product is not positive definite and the projection not unique,
i.e., it is only defined up to a one-sided ideal.
However we will simply ignore this issue
and define a formally consistent
map
satisfying all essential algebraic requirements needed for the actual computations.

\subsection{Failure of positivity and the modular property}
\label{ssec:failurepositive}
The following examples show that in the case of $c$-freeness the quasi-conditional expectations with
respect to $\varphi$ are not positive and fail the modular property
\eqref{eq:modularproperty}.
More precisely,
positive elements are not necessarily mapped to positive elements
the left modular property does
not hold for the right  quasi-conditional expectation and vice versa.

\begin{example}[Failure of the left modular property]
Assume $\mathcal{A}$ and $\mathcal{B}$ are $c$-free subalgebras in
  $(\alg{M},\varphi,\psi)$.
  Then for $a\in\mathcal{A}$ and $b\in \mathcal{B}$  we have
  \begin{equation*}
    \varphi(ba^n) = \varphi(b)\varphi(a^n)
  \end{equation*}
  for every $n\in\IN$ 
  and consequently the right conditional expectation onto $\alg{A}$ is
  \begin{equation*}
    \rE^\varphi_\mathcal{A}[b] = \varphi(b)1_\alg{M}.
  \end{equation*}
  On the other hand, the conditional free pyramidal law asserts
  \begin{equation*}
    \varphi(a_1ba_2)
    = \varphi(a_1a_2)\psi(b) + \varphi(a_1)\varphi(a_2)(\varphi(b)-\psi(b))
  \end{equation*}
  i.e.,
  \begin{align*}
    \varphi(aba^n)
    &= \varphi(a^{n+1})\psi(b) + \varphi(a)\varphi(a^n)(\varphi(b)-\psi(b))\\
    &= \varphi( (a\psi(b) + \varphi(a)(\varphi(b)-\psi(b)))a^n)
  \end{align*}
  and consequently
  \begin{equation}
    \label{eq:rEab}    
  \begin{aligned}
    \rE^\varphi_\alg{A}[ab]
    &= a\psi(b) + \varphi(a)(\varphi(b)-\psi(b))1_\alg{M}\\
    &\ne a\rE^\varphi_\mathcal{A}[b] = a\varphi(b)
  \end{aligned}
  \end{equation}
  unless $\varphi(b)=\psi(b)$.
\end{example}

\begin{example}[Failure of positivity]
The example at the origin of conditional freeness has its roots
  in harmonic analysis of the free group
  \cite{BozejkoLeinertSpeicher:1996:convolution}.
  Let $\Gamma=\IZ_2*\IZ_2 = \langle a,b\mid a^2=b^2=1\rangle$
  be the free Coxeter group with two generators.
  Then $a$ and $b$ are free with respect to the von Neumann trace
  $\psi(w) = \delta_{e,w}$ and $c$-free with respect
  to the regular state $\varphi(w)=e^{-\lambda\abs{w}}$ with $0<\lambda<\infty$,
  where by $\abs{w}$ we denote length of a (reduced) word $w\in\Gamma$.
  These functionals define states on the group von Neumann algebra
  $L(\Gamma)$.
  Let $x=1+a$ and $y=b$. Then $x$ is positive and so is $bxb$,
  however plugging $bxb$ into     \eqref{eq:rEab} we obtain
  $$
  \rE^\varphi_\alg{B}[bxb] = b^2+(1+e^{-\lambda})e^{-\lambda}b
  $$
  which is not positive for sufficiently small $\lambda$.
\end{example}

Note however, that in the context of denoising in Section~\ref{sec:denoising}
the left and right conditional expectations coincide with the free one.
\begin{proposition}
  Let $f$ be a strictly positive function on the spectrum of $a$
  such that $\psi(f(a))=1$.
  Then the state $\varphi(x)=\psi(f(a)x)$ is faithful
  and  the right quasi-conditional expectation $\rE_a^{(\varphi)}$ coincides with
  the free conditional expectation $E^{\psi}_a$.
\end{proposition}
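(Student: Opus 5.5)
Faithfulness and the coincidence of projections can both be reduced to the fact that $f(a)$ is a strictly positive element of the subalgebra generated by $a$.

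\emph{Faithfulness.} Since $f>0$ on the spectrum of $a$ and the spectrum is compact, we may assume $f(a)\geq \varepsilon 1$ for some $\varepsilon>0$. Otherwise we work in $L^2$ and use only that $f(a)$ has trivial kernel. Then $f(a)^{1/2}$ is invertible and
\[
\varphi(x^*x)=\psi(f(a)x^*x)=\psi(xf(a)x^*)=\psi\bigl((xf(a)^{1/2})(xf(a)^{1/2})^*\bigr),
\]
where the second equality uses traciality of $\psi$. Faithfulness of $\psi$ then gives $xf(a)^{1/2}=0$, hence $x=0$. The same argument applies to $\varphi(xx^*)=\psi(f(a)^{1/2}xx^*f(a)^{1/2})$.

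\emph{Coincidence of $\rE_a^{(\varphi)}$ and $E_a^{\psi}$.} Since $\varphi$ is faithful, the orthogonal projection $\rE_a^{(\varphi)}$ of $L^2(\alg{M},\varphi)_r$ onto the closure of $\alg{A}=W^*(a)$ is characterized by two conditions: $\rE_a^{(\varphi)}[u]\in\alg{A}$, and $\varphi(u\,a_1^*)=\varphi(\rE_a^{(\varphi)}[u]\,a_1^*)$ for all $a_1\in\alg{A}$. It therefore suffices to check that $E^\psi_a[u]$ satisfies the second condition. For $a_1\in\alg{A}$ we compute, using traciality of $\psi$, then the bimodule property of $E^\psi_a$ and $\psi\circ E^\psi_a=\psi$, then traciality again:
\[
\varphi(u a_1^*)=\psi(f(a)ua_1^*)=\psi(u\,a_1^*f(a))=\psi\bigl(E^\psi_a[u]\,a_1^*f(a)\bigr)=\psi\bigl(f(a)E^\psi_a[u]a_1^*\bigr)=\varphi(E^\psi_a[u]a_1^*).
\]
The key point is that $a_1^*f(a)\in\alg{A}$. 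Uniqueness of the orthogonal projection then yields $\rE_a^{(\varphi)}=E_a^\psi$. The same computation with the products reversed, $\varphi(a_1^*u)=\psi(f(a)a_1^*u)$ with $f(a)a_1^*\in\alg{A}$, gives the corresponding statement for the left quasi-conditional expectation.

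\emph{Main obstacle.} The only delicate point is analytic. $\rE$ is defined on the completion $L^2(\alg{M},\varphi)_r$, whereas $E^\psi_a$ is defined on $\alg{M}$ or on $L^2(\alg{M},\psi)$. We must check that the two $L^2$-norms are equivalent, so that both maps extend to the same Hilbert space. This holds because $\varepsilon\psi(xx^*)\le\varphi(xx^*)\le\|f(a)\|\psi(xx^*)$, again by traciality. If $f$ is only strictly positive but not bounded away from $0$, I would compare the two projections on the dense subspace $\alg{M}$, where the identity above holds verbatim, and extend by continuity in $\varphi$-norm. Here one uses that $E^\psi_a$ is $\varphi$-contractive, which follows from the computation above since $E^\psi_a$ acts as an orthogonal projection for $\varphi$ on $\alg{M}$.
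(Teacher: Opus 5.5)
Your proof is correct and follows the paper's argument: both show that $E^\psi_a[u]$ satisfies the defining $\varphi$-orthogonality relation against $\alg{A}=W^*(a)$, by absorbing $f(a)$ into the test element and using the module property and $\psi$-invariance of $E^\psi_a$. The paper does this in one line, $\psi(f(a)E^\psi_a[x]g(a))=\psi(f(a)xg(a))$, applying the bimodule property on both sides at once, so your two uses of traciality in that step are not needed; your faithfulness argument and the discussion of extending to $L^2$ supply details that the paper asserts without proof or leaves implicit.
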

\begin{proof}
  Indeed,
  \begin{align*}
    \varphi\bigl(E_a^\psi[x]g(a)\bigr)
    &=     \psi\bigl(f(a)E_a^\psi[x]g(a)\bigr)\\
    &=     \psi(f(a)xg(a))\\
    &=     \varphi(xg(a))
  \end{align*}
  and therefore $\rE^\varphi_a[a] = E_a^\psi[a]$.  
\end{proof}
\begin{remark}
This does not contradict \eqref{eq:rEab} as we have $\varphi(b)=\psi(b)$ when
$b$ is free from $a$ with respect to $\psi$.  
\end{remark}

\subsection{A recurrence for the right quasi-conditional expectation}
Before embarking on the $c$-free problem,
we recall the algebraic framework for the free conditional expectation from
\cite{LehnerSzpojankowski:2023:freeint1}.

Recall that for the construction of the conditional expectation of
a noncommutative polynomial $u$
in variables $a_1,a_2,\ldots,a_n\in\alg{M}$ 
onto the algebra $\alg{A}$ generated by a subset of the variables
$a_1,a_2,\ldots,a_k$  one has to find suitable expressions for moments of the form 
\begin{equation*}
\varphi\left(a_{i_1}a_{i_2}\dotsm a_{i_r} b\right),
\end{equation*}
where $i_1,i_2,\ldots,i_r\in \{1,\ldots,n\}$ and $b\in\alg{A}$.
It is a fundamental property of freeness
(as one of the  universal notions of independence in the sense of \cite{Muraki:2002:five})
that all joint moments of
freely independent random variables are uniquely determined by the marginal moments
of the variables in question. Thus for each moment of the form indicated above
there is a universal formula (not depending on the particular choice of the
distributions of $a_1,a_2,\ldots,a_n$) which expresses any joint moment as a sum
of products of marginal moments.

  We will work in the formal setting of noncommutative polynomials
  $\IC\langle X,Y\rangle$   with $c$-free state $\varphi$ and free state $\psi$
  as described in Subsection~\ref{ssec:nc_polynomials}.
  The states are possibly not faithful and the conditional expectation
  therefore not unique, yet the following formula
consistently defines a   conditional expectation
  with respect to $\psi$ and specifies to the correct one when
  evaluated on free elements in von Neumann algebras.
  \begin{proposition}[\cite{LehnerSzpojan}]
    \label{prop:freecondexpX}
    Assume that $\psi$ is a state on $\IC\langle X,Y\rangle$ such that $X$ and
    $Y$ are free.
    Given a word  in block-factorization $W=X_0Y_1X_1Y_2\dotsm X_{n-1}Y_nX_n$
    (with $X_0$ and $X_n$ possible being empty),
    we define the linear map $\E^\psi_X:\IC \langle X,Y \rangle\to \IC \langle X \rangle$
    by
      \begin{multline}
      \label{eq:Epsi:full}
\E^\psi_{X}\left[X_0Y_1X_1Y_2\dotsm X_{n-1}Y_nX_n\right]
        \\
  =
    \sum_{p=0}^{n-1}\sum_{0=i_0 < i_1<i_2<\dots< i_{p+1}= n}
    X_{0}X_{i_1}\dotsm X_{i_{p}}X_n\prod_{j=0}^p \beta^\psi_{2(i_{j+1}-i_j)-1}(Y_{i_j+1},X_{i_j+1},\ldots, Y_{i_{j+1}}).        
      \end{multline}      
Then $E_X^\psi$ is  a conditional expectation with respect to the state $\psi$,
    i.e., it has the bimodule property $E_X^\psi[X_0WX_n]=X_0E_X^\psi[W]X_n$
    and preserves the state $\psi$.
\end{proposition}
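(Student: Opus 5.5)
The bimodule property is immediate from the definition \eqref{eq:Epsi:full}, so the real content is state preservation. For bimodularity: multiplying $W$ on the left by a pure $X$-word $X'$ only changes the first block $X_0$ into $X'X_0$, and on the right only changes $X_n$ into $X_nX''$. Every summand of \eqref{eq:Epsi:full} contains $X_0$ as its leftmost factor and $X_n$ as its rightmost factor, and the cumulant coefficients involve only $Y_1,X_1,\dots,Y_n$. Hence $E_X^\psi[X'WX'']=X'E_X^\psi[W]X''$ by linearity. It therefore remains to prove $\psi(E_X^\psi[W])=\psi(W)$ for every word $W$.

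I will prove this by expanding $\psi(W)$ through the Boolean deconcatenation recurrence \eqref{eq:recurrenceboolcumpsi}, applied to the letters $X_0,Y_1,X_1,\dots,Y_n,X_n$ viewed as blocks. Iterating the recurrence writes $\psi(W)$ as a sum over interval partitions of the sequence of blocks, each interval contributing a block Boolean cumulant $\beta^\psi$. One must first handle products as entries, either by treating each $X_i$ or $Y_i$ as a single entry, or by using Corollary~\ref{cor:RecursiveBoolProd}.

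By property (CAC) from Remark and Definition~\ref{remdef:fCAC}, every interval with nonvanishing cumulant starts and ends with the same variable. So an interval either starts and ends with an $X$-block or starts and ends with a $Y$-block.

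The key step is to regroup the remaining terms by fixing the set of $X$-blocks $X_{i_1},\dots,X_{i_p}$ that are \emph{not} swallowed inside a $Y$-bounded interval. I do this by induction on $n$, peeling off the first $Y$-bounded stretch $(Y_{i_j+1},X_{i_j+1},\dots,Y_{i_{j+1}})$. After the regrouping, $\psi(W)$ becomes
\[
\sum_{p}\sum_{i_1<\dots<i_p}\prod_j \beta^\psi_{2(i_{j+1}-i_j)-1}(Y_{i_j+1},\dots,Y_{i_{j+1}})\cdot \Sigma,
\]
where $\Sigma$ is the sum over interval partitions of the surviving $X$-letters $X_0X_{i_1}\cdots X_{i_p}X_n$, with all $Y$-stretches deleted.

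Two facts are needed to identify $\Sigma$. First, by freeness, an $X$-bounded interval containing $Y$'s factorizes into pure Boolean cumulants of the $X$-word obtained by deleting the $Y$-stretches, which is the VNRP decomposition \eqref{eq:VNRPpsi:pic} of Remark~\ref{rmk:VNRPDiagrammatic}. Second, an interval partition of the surviving $X$'s then resums, by \eqref{eq:recurrenceboolcumpsi} again, to $\psi(X_0X_{i_1}\cdots X_n)$. This identifies $\psi(W)$ with $\psi(E_X^\psi[W])$.

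I expect the main obstacle to be the bookkeeping in the middle step. Each nested $Y$-stretch must be counted exactly once, and the VNRP expansion must be shown to match the resummation. I would first try a clean induction on $n$ via the first-step decomposition of $\beta^\psi$ of an $X$-bounded word. Alternatively, I would cite the proof in \cite{LehnerSzpojan}, since the statement is quoted from there.
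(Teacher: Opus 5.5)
The paper gives no proof of this proposition; it is quoted from \cite{LehnerSzpojan}. Your argument is correct. It uses the same ingredients as the paper's proof of the Lemma on $\varphi(\E^\psi_X[W])$ in Section~\ref{sec:6}: the Boolean moment--cumulant expansion, property (CAC), and the VNRP expansion \eqref{eq:VNRPpsi:pic}. The bimodule part is fine as written, with the convention $\E^\psi_X[X_0]=X_0$ when $n=0$.

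The step to tighten is the regrouping, which as described is done in the wrong order. The surviving $X$-blocks are not determined by the interval partition $\sigma$ alone (``not inside a $Y$-bounded interval''). An $X$-bounded interval of $\sigma$ still contains $Y$'s, and its cumulant does not factor. Instead it expands as a sum over which interior $X$'s lie in the outer block, e.g.
\[
\beta^\psi_5(X_0,Y_1,X_1,Y_2,X_2)=\beta^\psi_3(X_0,X_1,X_2)\,\beta^\psi_1(Y_1)\,\beta^\psi_1(Y_2)+\beta^\psi_2(X_0,X_2)\,\beta^\psi_3(Y_1,X_1,Y_2).
\]
So the surviving set must be defined after the VNRP expansion. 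Proceed as follows.

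By (CAC) and alternation, every contributing $\sigma$ has the form $I_0J_1I_1\cdots J_qI_q$, with each $I_k$ $X$-bounded and each $J_k$ $Y$-bounded. Expand each $\beta^\psi(I_k)$ by \eqref{eq:VNRPpsi:pic}. Let $S$ be the union of the chosen outer blocks, and $\tau$ the interval partition of $(X_0,X_{i_1},\dots,X_{i_p},X_n)$ whose blocks are these outer blocks.

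The data ($\sigma$, choice of outer blocks) then correspond bijectively to pairs $(S,\tau)$. Under this bijection:
\begin{itemize}
\item the $J_k$ are exactly the $Y$-stretches between consecutive blocks of $\tau$;
\item the VNRP factors are exactly the $Y$-stretches between consecutive elements inside a block of $\tau$.
\end{itemize}
Hence each gap $(i_j,i_{j+1})$ of $S$ contributes exactly one factor $\beta^\psi_{2(i_{j+1}-i_j)-1}(Y_{i_j+1},\dots,Y_{i_{j+1}})$. Summing over $\tau$ with \eqref{eq:mcboolean} gives $\psi(X_0X_{i_1}\cdots X_{i_p}X_n)$, so $\psi(W)=\psi(\E^\psi_X[W])$ follows directly, with no induction needed.

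An empty $X_0$ or $X_n$ can be kept as a unit entry of colour $X$, since all identities used hold for arbitrary elements of the unital subalgebras. Your fallback, induction via the first-step recurrence, is how the paper organises its Lemma. The full-expansion bijection is the non-recursive version of the same computation.
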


The expansion \eqref{eq:Epsi:full} has a diagrammatic representation similar to VNRP as in
Remark~\ref{rmk:VNRPDiagrammatic}:
\begin{multline}
  \label{eq:Epsi:full:pic}
  E^\psi_X[X_0 Y_1 X_1 \dotsm Y_n X_n]
  \\=
  \sum
\begin{tikzpicture}[anchor=base,baseline]
\draw (-1.5,-0.4)--(-1.5,0.75)--(10.5,0.75)--(10.5,-0.4);
    \draw (1.5,-0.4)--(1.5,0.75);
    \draw (4.5,-0.4)--(4.5,0.75);
    \draw(7.5,-0.4)--(7.5,0.75); 		
    \node at (-1.5,-0.8){$X_0$};
    \node at (1.5,-0.8){$X_{i_1}$};
    \node at (4.5,-0.8){$X_{i_2}$};
    \node at (7.5,-0.8){$X_{i_p}$};
    \node at (10.5,-0.8){$X_{n}$};
    \paczek{0.0,0.0}{\beta^\psi(W_0)}
    \paczek{3,0.0}{\beta^\psi(W_1)}
    \node at (6,0.0){$\cdots$};
    \paczek{9,0.0}{\beta^\psi(W_p)}
  \end{tikzpicture}
\end{multline}
where $W_j=Y_{i_j+1}X_{i_j+1}\dotsm Y_{i_{j+1}}$.

The main tool in \cite{LehnerSzpojankowski:2023:freeint1} is the following
recurrence for the conditional expectation $E_X^\psi$, when evaluated on
monomials starting and ending in $Y$, i.e., when $X_0=X_n=1$ in formula 
\eqref{eq:Epsi:full} above:
\begin{equation}
  \label{eq:Epsi:recurrence}
  E^\psi_{X}[Y_0X_1Y_1\dotsm X_nY_n] =
  \sum_{k=0}^n\beta^\psi_{2k+1}(Y_0,X_1,Y_1,\dots, X_k,Y_k) \,  E^\psi_{X}[X_{k+1}Y_{k+1}\dotsm X_nY_n]
  ,
\end{equation}

which follows a pattern similar to \eqref{fig:boolRecPsi}
$$
E^\psi_X[Y_0 X_1 Y_1 \cdots X_n Y_n]
=
\sum
\begin{tikzpicture}[anchor=base,baseline]
  \node at (-0.25,0.45) {$\beta^{\psi}$};
  \draw (-0.25,-0.25)--(-0.25,0.4)--(3.25,0.4)--(3.25,-0.25);
  \draw (0.25,-0.25)--(0.25,0.4);
  \draw (0.75,-0.25)--(0.75,0.4);
  \draw (1.25,-0.25)--(1.25,0.4);
  \draw (2.75,-0.25)--(2.75,0.4);
  \node at (2.0,-0.1){$\cdots$};
  \node at (-0.25,-0.7){$Y_0$};
  \node at (0.25,-0.7){$X_1$};
  \node at (0.75,-0.7){$Y_1$};
\node at (3.3,-0.7){$Y_{k}$};
\end{tikzpicture}
\kern-0.5em
E^\psi_X[X_{k+1}Y_{k+1}  \dots X_n Y_n]
$$
Furthermore in \cite{LehnerSzpojankowski:2023:freeint1} the second and third named
authors have described recursive relations between the functionals $\E_X^\psi$
and $\beta_Y^{b,\psi}$ in terms of partial deconcatenations.

\begin{theorem}[{\cite[Thm.~4.12]{LehnerSzpojankowski:2023:freeint1}}]
  \label{thm:FreeIntCalc4.12}
  For any monomial $W\in\IC\langle X,Y\rangle$  the following recurrence relations hold:
  \begin{equation} 
    \label{eq:condexprec}
    \begin{aligned}
      \E_{X}^\psi[W] &= \bbetas{\psi}{Y}(W)
      + (\bbetas{\psi}{Y}\otimes \E^\psi_{X}) [\rdelta_{X}(W)]\\
      &= \bbetas{\psi}{Y}(W) + 
      (\E^\psi_{X}\otimes\bbetas{\psi}{Y})[\ldelta_{X}(W)].
    \end{aligned}
  \end{equation}
\end{theorem}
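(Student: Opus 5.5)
The plan is to check both identities monomial by monomial. I take the explicit formula \eqref{eq:Epsi:full} from Proposition~\ref{prop:freecondexpX} as the definition of $\E^\psi_X$, together with its bimodule property. By linearity it suffices to treat a monomial $W=X_0Y_1X_1\dotsm Y_nX_n$ in block factorization, where $X_0$ and $X_n$ may be empty. I treat only the first identity; the second follows by the mirror-image argument, reading words from right to left and exchanging $\rdelta_X$ with $\ldelta_X$.

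\emph{Step 1: expand $\rdelta_X(W)$.} Using the Leibniz rule and $\rdelta_X(X^m)=\sum_{k=0}^{m-1}X^k\otimes X^{m-k}$, we get $\rdelta_X(W)=\sum U\otimes V$. The sum runs over all factorizations $W=UV$ in which $V$ begins with a letter $X$. Now $\bbetas{\psi}{Y}(U)$ vanishes unless $U=1$ or $U$ both begins and ends with $Y$. Hence only two kinds of cuts survive:
\begin{itemize}
\item[(a)] the cut at the very beginning, which contributes $U=1$ and is possible only if $X_0\neq 1$;
\item[(b)] a cut at the start of an $X$-block $X_k$ with $k\ge1$, provided $X_0=1$, so that $U=Y_1X_1\dotsm Y_k$.
\end{itemize}
In case (b), $\bbetas{\psi}{Y}(U)=\beta^\psi_{2k-1}(Y_1,X_1,\dots,Y_k)$ by Definition~\ref{def:blockcumulant}.

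\emph{Step 2: the easy case $X_0\ne 1$.} Then $\bbetas{\psi}{Y}(W)=0$, and only the cut (a) survives, contributing $\E^\psi_X[W]$. So the identity is trivially true. The pure case $W=X^m$ is included here, and the case $W=1$ is immediate since both sides equal $1$.

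\emph{Step 3: the main case $X_0=1$.} Now the right-hand side becomes
\[
  \bbetas{\psi}{Y}(W)+\sum_{k=1}^{n}\beta^\psi_{2k-1}(Y_1,X_1,\dots,Y_k)\,\E^\psi_X[X_kY_{k+1}\dotsm Y_nX_n],
\]
where the $k=n$ summand is to be read with $\E^\psi_X[X_n]=X_n$.

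\begin{itemize}
\item If $X_n\ne1$, then $\bbetas{\psi}{Y}(W)=0$.
\item If $X_n=1$, then $\bbetas{\psi}{Y}(W)=\beta^\psi_{2n-1}(Y_1,\dots,Y_n)$. In this case the $k=n$ summand is absent, because there is no $X$-block $X_n$ to cut at. The term $\bbetas{\psi}{Y}(W)$ plays exactly the role of that missing summand, with $\E^\psi_X[1]=1$.
\end{itemize}

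It remains to show that $\E^\psi_X[Y_1X_1\dotsm Y_nX_n]$ equals this sum, i.e.\ to prove the recurrence \eqref{eq:Epsi:recurrence} with an index shift and with a possibly nonempty trailing block $X_n$. I derive it directly from \eqref{eq:Epsi:full} by grouping terms according to the first cut point $i_1=k$. The factor attached to the first interval is $\beta^\psi_{2k-1}(Y_1,X_1,\dots,Y_k)$. The prefix monomial is $X_0X_{i_1}\dotsm X_n=X_kX_{i_2}\dotsm X_n$, and the remaining product runs over chains $k=i_1<i_2<\dots<i_{p+1}=n$. This remaining sum is precisely \eqref{eq:Epsi:full} applied to the word $X_kY_{k+1}X_{k+1}\dotsm Y_nX_n$, whose leading block is $X_k$. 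So it equals $\E^\psi_X[X_kY_{k+1}\dotsm X_n]$, which gives the claim.

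The main obstacle is the bookkeeping in Step 3. One has to match the boundary conventions, namely empty $X_0$ or $X_n$ and the $p=0$ term with $i_1=n$, against the surviving cuts in $\rdelta_X$. In particular one must confirm that the term $\bbetas{\psi}{Y}(W)$ accounts exactly for the term $i_1=n$ when $X_n=1$, and that it vanishes otherwise.
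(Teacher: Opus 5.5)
Your proof is correct. The paper itself gives no proof of this statement; it imports it from \cite[Thm.~4.12]{LehnerSzpojankowski:2023:freeint1}. So your argument is a self-contained substitute rather than a variant of something in the text.

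You take \eqref{eq:Epsi:full} as the definition of $\E^\psi_X$, and two observations then do all the work. First, $\bbetas{\psi}{Y}$ kills every cut of $\rdelta_X W$ except the initial cut when $X_0\neq 1$, or the cuts at the starts of $X$-blocks when $X_0=1$. Second, regrouping the chains in \eqref{eq:Epsi:full} by the first cut point $i_1$ reproduces \eqref{eq:Epsi:full} for the suffix $X_kY_{k+1}\dotsm Y_nX_n$. This is the same chain-splitting the paper performs on \eqref{eq:Epsi:full} in the lemma preceding the proof of Theorem~\ref{thm:rEphi:recurrence}.

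Your boundary bookkeeping also checks out. The $p=0$ chain contributes $\beta^\psi_{2n-1}(Y_1,X_1,\dots,Y_n)X_n$. This is the $k=n$ summand when $X_n\neq 1$, and it is exactly $\bbetas{\psi}{Y}(W)$ when $X_n=1$. Only two conventions are needed:
\begin{itemize}
\item $\E^\psi_X[X_0]=X_0$ for pure $X$-words, where \eqref{eq:Epsi:full} is vacuous;
\item reading Definition~\ref{def:blockcumulant} so that single-block words such as $Y^m$ count as beginning and ending with $Y$.
\end{itemize}
The $\ldelta_X$ identity goes through in the same way by grouping according to the last cut point $i_p$.

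Compared with the citation, your route buys more than self-containedness. It shows the recurrence is a purely combinatorial consequence of the shape of \eqref{eq:Epsi:full}. Freeness enters only in knowing that \eqref{eq:Epsi:full} defines a conditional expectation, not in the recurrence itself. The citation, by contrast, is shorter and places the identity inside the derivation calculus developed in the cited work.
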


\begin{remark}
  If one is interested in stating the multivariate version of the previous theorem, the corresponding partial divided power derivations  to be considered are given by 
  \begin{align}
    \rdelta_{\alg{X}}(P) &= \sum (1\otimes X_i)\partial_{X_i}(P),
    \\
    \ldelta_{\alg{X}}(P) &= \sum (X_i\otimes 1)\partial_{X_i}(P),
  \end{align}
  for any $P\in \mathbb{C}\langle \alg{S}\rangle$, where the alphabet is
  $\alg{S}=\alg{X}\cup\alg{Y}$
  and the sums run over the letters  $X_i\in\alg{X}$.
\end{remark}

We now turn to the extension of the preceding results to the $c$-free
setting.
Since $X$ and $Y$ are $c$-free, they are free with respect to
$\psi$. In addition to the conditional expectation
$E^\psi_X$ from Proposition~\ref{prop:freecondexpX}
we will now construct an analogous map $\rE_X^\varphi$
with respect to $\varphi$, which,
as we have seen in Section~\ref{ssec:failurepositive},
is not necessarily positive and only satisfies 
the right module property.
\begin{definition}\label{def:rightcondexp}
  Let $\varphi:\C\langle X,Y\rangle \to \C$ be a noncommutative
  distribution. A \emph{right quasi-conditional expectation} for $\varphi$ is a
  linear map $E:\C\langle X,Y\rangle \to \C\langle X\rangle$ with the following
  properties: 
  \begin{enumerate}[label=(\roman*)]
   \item
    \label{def:it:invariance}
    \emph{Invariance.}
    For any polynomial $P(X,Y)\in \C\langle X,Y\rangle$ we have
    \[\varphi(\E[P(X,Y) ])=\varphi(P(X,Y)).\] 
   \item
    \label{def:it:rightmodular}
    \emph{Right modular property.}
    For any polynomial $P(X,Y)\in \C\langle X,Y\rangle$ and polynomial $Q(X)$ we have 
    \[
      \E[P(X,Y) Q(X)]=\E[P(X,Y)]Q(X)
      .\]
  \end{enumerate}
\end{definition}
At the time of this writing we are not able to provide a full formula
for $\rE_X^\varphi$ like \eqref{eq:Epsi:full} and rather have to rely on an
analogue of the recurrence \eqref{eq:condexprec} for its definition.
\begin{theorem} 
  \label{thm:rEphi:recurrence}
  Assume that  $X$ and $Y$ are $c$-free
  with respect to the pair of states $(\varphi,\psi)$
  on $\IC\langle X,Y\rangle$.
  Then the following recurrence defines a right quasi-conditional expectation
  for $\varphi$
  onto the subalgebra $\IC\langle X \rangle$:
  \begin{enumerate}[label=(\roman*)]
   \item
    $\rE_X^{\varphi}[P(X)]=P(X)$ for any polynomial $P(X)\in\IC\langle X\rangle$.
   \item 
    $\rE_X^{\varphi}[P(Y)]=\varphi(P(Y))$ for any polynomial $P(Y)\in\IC\langle Y\rangle$.
   \item
    If $W\in \C\langle X,Y\rangle$ is a monomial starting with $Y$,
    then
    \begin{equation}
      \label{eq:rEphi[YW]:rec}
      \rE_X^\varphi[W] = \bbetas{\varphi}{Y}(W) +
      \bigl(\bbetas{\varphi}{Y}\otimes\rE^\varphi_X\bigr)[\rdelta_XW].
    \end{equation}
   \item 
    If $W\in \C\langle X,Y\rangle$ is a monomial starting with $X$,  then
    \begin{equation}
      \label{eq:rEphiX[XW]}
      \rE_X^\varphi[W] =
      \IE_X^\psi[W] + \bigl(\bbetas{\varphi}{X}\otimes(\rE^\varphi_X-\IE_X^\psi)\bigr)[\rdelta_YW].
    \end{equation}
   \item In general, for any monomial $W\in \C\langle X,Y\rangle$, we have
    \begin{equation}
      \label{eq:rEphiX[W]:LX}
      \rE^\varphi_X[W]
      = \bbetas{\varphi}{Y}(W)
      +   (\bubbetas{\varphi}{Y}\otimes\rE^\varphi_X)[\rdelta_XW] 
      +   
      \bigl(
      \bubbetas{\varphi}{X}\otimes(\rE^\varphi_X-\IE^\psi_X)
      \bigr)[\rdelta_YW]
      + X\IE^\psi_X[L_X(W)],
    \end{equation}
    where $\bubbetas{\varphi}{}=\bbetas{\varphi}{}-\epsilon$ and $L_X$ is the left annihilation operator, i.e., 
    \[
      L_X(W) = \begin{cases}
        0 & \text{if $W=1$ or $W=YW'$ for some $W'\in \alg{M}$} 
        \\
        W' & \text{if $W=XW'$}
      \end{cases}
    \]
  \end{enumerate}
\end{theorem}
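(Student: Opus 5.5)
The plan is to first show that (i)--(iv) consistently define a linear map $\rE_X^\varphi:\IC\langle X,Y\rangle\to\IC\langle X\rangle$. I would argue by induction on the block length of the monomial $W$. In (iii), every term of $\rdelta_X W$ whose right tensor leg still contains a $Y$ has a right leg starting with $X$ and of strictly smaller block length. In (iv), the right legs of $\rdelta_Y W$ start with $Y$ and are shorter. The only circularity is between (iii) and (iv), and it always strictly decreases the block length. The base cases are (i) and (ii). The free map $\IE^\psi_X$ is already known from Proposition~\ref{prop:freecondexpX}. Formula (v) I would obtain afterwards as a bookkeeping consequence: split $W$ according to its first letter, use $\bbetas{\varphi}{}=\bubbetas{\varphi}{}+\epsilon$, and observe that the $\epsilon$-parts of (iii)/(iv) reproduce $X\IE^\psi_X[L_X(W)]$ via Theorem~\ref{thm:FreeIntCalc4.12}.

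Next comes the right modular property. Since $\rdelta_X(WX^m)=\rdelta_X(W)\cdot(1\otimes X^m)+\sum (W X^k\otimes X^{m-k})$, I would show by the same induction that $\rE^\varphi_X[WX^m]=\rE^\varphi_X[W]X^m$. For $W$ starting with $Y$, the extra terms $\bbetas{\varphi}{Y}(WX^k)\rE^\varphi_X[X^{m-k}]$ with $k\ge1$ vanish, because $\bbetas{\varphi}{Y}$ kills words ending in $X$. The term $\bbetas{\varphi}{Y}(WX^m)$ likewise vanishes, while the $k=0$ term supplies exactly $\bbetas{\varphi}{Y}(W)X^m$. For $W$ starting with $X$, I would use the bimodule property of $\IE^\psi_X$ together with the fact that $\rdelta_Y$ commutes with right multiplication by $X^m$ (Leibniz rule, $\rdelta_Y X=0$), and then apply the induction hypothesis to the right legs.

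The key step is invariance, $\varphi(\rE^\varphi_X[W])=\varphi(W)$. For $W$ starting with $Y$, I would apply $\varphi$ to (iii) and compare with the Boolean recursion \eqref{eq:recurrenceboolcum}, grouping the first Boolean cumulant block as a block-cumulant. Concretely, I would use the Leonov--Shiryaev type formula \eqref{eq:intro_betas_prod_as_entries} for $\varphi$ (valid by Proposition~\ref{thm:FIC5.5}(iii)) in the form
\[
\varphi(W)=\bbetas{\varphi}{Y}(W)+(\bbetas{\varphi}{Y}\otimes\varphi)(\rdelta_X W),
\]
which I expect to follow from \eqref{eq:recurrenceboolcum} and (CAC) exactly as in the free case. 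By induction this identity matches $\varphi\circ$(iii).

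For $W$ starting with $X$ I expect the main obstacle. Here one needs
\[
\varphi(W)=\varphi(\IE^\psi_X[W])+(\bbetas{\varphi}{X}\otimes(\varphi-\varphi\circ\IE^\psi_X))(\rdelta_Y W).
\]
The plan is to write both $\varphi(W)$ and $\varphi(\IE^\psi_X[W])$ via the deconcatenation recursion with $\bbetas{\varphi}{X}$ on the first block. For $\varphi(\IE^\psi_X[W])$ one uses the diagrammatic expansion \eqref{eq:Epsi:full:pic} together with the $c$-free VNRP formula \eqref{eq:VNRPphi:pic}. The outer $\varphi$-cumulant of the first $X$-block run is then the same in both expansions, while the nested $\beta^\psi$ pieces coincide by Theorem~\ref{thm:VNTPCfree}. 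Hence the two first-block contributions agree termwise, and the difference is carried entirely by the remainders, which is exactly the correction term.

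Making this cancellation precise, i.e.\ identifying the VNRP partitions whose outer block is followed by a $Y$-starting remainder, is the delicate combinatorial step. I would first verify it on $W=XYX$, checking against the pyramidal law in \eqref{eq:rEab}.
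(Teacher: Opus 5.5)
Your plan is correct and follows the paper's proof. It uses induction on block length, and for words starting with $Y$ the block form of \eqref{eq:recurrenceboolcum} together with (CAC). For words starting with $X$ it rests on the identity $\varphi(\IE^\psi_X[W])=\bbetas{\varphi}{}(W)+\bigl(\bbetas{\varphi}{}\otimes(\varphi\circ\IE^\psi_X)\bigr)[\rdelta_YW]$. The paper isolates this as the lemma preceding the proof and proves it exactly as you sketch: full expansion \eqref{eq:Epsi:full}, deconcatenation of the outer $\varphi$-moment of the selected $X$-blocks, and regrouping via the $c$-free VNRP \eqref{eq:VNRP:cfree}.

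Your explicit checks of well-definedness and of the right modular property go beyond the paper, which leaves the latter implicit. One citation should be corrected: the identity $\varphi(W)=\bbetas{\varphi}{Y}(W)+(\bbetas{\varphi}{Y}\otimes\varphi)(\rdelta_XW)$ is not a form of \eqref{eq:intro_betas_prod_as_entries}, which relates block to letter-wise cumulants. It is simply \eqref{eq:recurrenceboolcum} with block entries plus (CAC), as you also note.
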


\begin{remark}
  \begin{enumerate}[label=(\roman*)]
   \item []
   \item Note that for possible lack of faithfulness this conditional
    expectation is not necessarily unique, however it is well-defined.
   \item 
    It is not difficult to see that writing out equations
    \eqref{eq:rEphi[YW]:rec} and \eqref{eq:rEphiX[XW]} results in
    \begin{equation}
      \label{eq:rEphi[Y0X1Y1]:rec}
      \rE_X^\varphi[Y_0X_1Y_1\dotsm X_nY_n] =
      \sum_{k=0}^n\beta^\varphi_{2k+1}(Y_0,X_1,Y_1,\dots,X_k,Y_k)\rE_X^\varphi[X_{k+1}Y_{k+1}\dotsm X_nY_n]
    \end{equation}
    and
    \begin{multline}
      \rE^\varphi_X[X_0Y_1X_1Y_2\dotsm Y_n]
      \\
      =         E_X^\psi[X_0Y_1X_1\dotsm Y_n]
      + \sum_{k=0}^{n-1}
      \beta^{\varphi}_{2k+1}(X_0,Y_1,X_1,Y_2,\dots, X_k)
      \rE_X^\varphi [Y_{k+1}X_{k+1}\dotsm Y_n]
      \\
      - \sum_{k=0}^{n-1} \beta^\varphi_{2k+1}(X_0,Y_1,X_1,\ldots, X_k)\,
      \E_X^\psi[Y_{k+1}X_{k+1}\dotsm Y_n]
    \end{multline}
    respectively, where $X_0,\ldots,X_n$ are non-constant monomials in $X$ and
    $Y_0,\ldots,Y_n$ are non-constant monomials in $Y$.  In particular,
    equation
    \eqref{eq:rEphi[Y0X1Y1]:rec} is identical to the recurrence in the free
    case \eqref{eq:Epsi:recurrence}.
\end{enumerate}
\end{remark}

\begin{remark}
  Rearranging identity   \eqref{eq:rEphiX[XW]}, we observe that for any monomial $W$ starting with  $X$, the element $\widetilde{W}= W -
  (\bbetas{\varphi}{X}\otimes \id)[\rdelta_YW]$ satisfies
  $$
  \rE^\varphi_X[\widetilde{W}]=  \IE^\psi_X[\widetilde{W}]
  .
  $$
\end{remark}

The following lemma connects the $c$-free quasi-conditional expectation
to the free conditional expectation and is the key to the proof of the preceding theorem.
\begin{lemma}
  If $W\in \C\langle X,Y\rangle$ is a monomial starting with $X$,  then
  \begin{equation}
    \varphi(\E^\psi_X[W]) = \bbetas{\varphi}{}(W) + \bigl(\bbetas{\varphi}{}\otimes(\varphi\circ \E^\psi_X)\bigr)[\rdelta_YW];
  \end{equation}
  that is, if $W= X_0Y_1X_1\dotsm Y_nX_n$ where $X_0,\ldots,X_n$ are non-constant monomials in $X$ and $Y_0,\ldots,Y_n$ are non-constant monomials in $Y$, then
  \begin{equation}
    \varphi(\E_X^\psi[W])
    = \sum_{k=0}^{n} \beta^\varphi_{2k+1}(X_0,Y_1,X_1,\ldots, X_k)\,
    \varphi(\E_X^\psi[Y_{k+1}X_{k+1}\dotsm Y_nX_n]).
  \end{equation}
  If $X_n$ is constant, i.e., 
  if $W=X_0Y_1X_1\cdots X_{n-1}Y_n$, then the identity holds as well, but the
  term  $\bbetas{\varphi}{}(W)=0$ vanishes.
\end{lemma}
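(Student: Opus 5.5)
We will prove the explicit form of the identity; the compact form then follows by reading off $\rdelta_Y W$ on the block factorization. Write $W=X_0Y_1X_1\dotsm Y_nX_n$.

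First we expand $\E^\psi_X[W]$ by Proposition~\ref{prop:freecondexpX}. Each term is a product of a monomial $X_0X_{i_1}\dotsm X_{i_p}X_n$ and of $\psi$-Boolean cumulants $\beta^\psi(W_j)$. Applying $\varphi$ to the monomial and using the Boolean moment-cumulant relation \eqref{eq:mcboolean} for $\varphi$ (on the single variable $X$, with arguments the blocks $X_0,X_{i_1},\dots,X_n$), we obtain
\[
\varphi(\E^\psi_X[W])=\sum \beta^\varphi_{\sigma}(X_0,X_{i_1},\dots,X_n)\prod_j\beta^\psi(W_j),
\]
where $\sigma$ runs over interval partitions of the $p+2$ blocks. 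Since the $\beta^\psi(W_j)$ sit between consecutive $X$-blocks, this is a sum over nested diagrams. In each diagram there are outer $\varphi$-blocks on $X$'s, and each gap between consecutive $X$'s of an outer block is filled with a $\beta^\psi$ of an alternating word, exactly as in \eqref{eq:Epsi:full:pic}.

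Next we isolate the first outer $\varphi$-block, the one containing $X_0$. Suppose it ends at $X_k$. Collecting all diagrams in which the part from $X_0$ to $X_k$ is a single $\varphi$-block with $\beta^\psi$-fillers, we get exactly the right-hand side of the VNRP formula \eqref{eq:VNRP:cfree}, pictured in \eqref{eq:VNRPphi:pic}, for $\beta^\varphi_{2k+1}(X_0,Y_1,\dots,X_k)$. This uses Theorem~\ref{thm:VNTPCfree}, which applies since $X,Y$ are $c$-free.

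The remaining part, from $Y_{k+1}$ to $X_n$, must then be matched with $\varphi(\E^\psi_X[Y_{k+1}X_{k+1}\dotsm Y_nX_n])$. Expanding that expression in the same way, the first $\beta^\psi$ filler starts at $Y_{k+1}$ and ends at some $Y_{i_1}$, followed by further outer $\varphi$-blocks on the remaining $X$'s. The remaining diagrams from the first expansion have precisely this shape. The case $k=n$ gives the term $\beta^\varphi_{2n+1}(W)=\bbetas{\varphi}{}(W)$, with the convention $\varphi(\E^\psi_X[1])=1$. If $X_n$ is constant, the last outer block cannot end at $X_n$, so this term is absent; consistently, the cumulant vanishes by (CAC).

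The main obstacle is the bookkeeping step: checking that the decomposition is a bijection. A diagram for $W$ must correspond uniquely to a pair consisting of a VNRP irreducible diagram for $X_0\dotsm X_k$ and a diagram for the tail. Specifically, we must verify that the fillers of the first outer block in the expansion of $\E^\psi_X$ are arbitrary, meaning that nested $\beta^\psi$'s over alternating words reproduce the inner VNRP structure. This is guaranteed because VNRP partitions whose outer block is an $X$-block decompose as an outer $X$-block with gaps filled by irreducible $\psi$-pieces. Each such piece is itself summed to $\beta^\psi$ by the free VNRP formula \eqref{eq:VNRPpsi:pic}. I would state this as a short combinatorial claim and then conclude by comparing the two sums term by term.
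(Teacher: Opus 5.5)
Your proposal is correct and is essentially the paper's proof. Expanding $\varphi(X_0X_{i_1}\dotsm X_n)$ over interval partitions and isolating the block containing $X_0$ amounts to the deconcatenation recurrence \eqref{eq:recurrenceboolcum}, which the paper applies directly; both arguments then recognize the VNRP picture \eqref{eq:VNRPphi:pic} for $\beta^\varphi_{2k+1}(X_0,Y_1,\dots,X_k)$ and the expansion \eqref{eq:Epsi:full} with empty first factor for the tail. The one detail to add when reading off the compact form: the terms of $\rdelta_Y W$ that split a $Y$-block internally contribute nothing, because their left factor starts with $X$ and ends with $Y$, so its $\varphi$-Boolean cumulant vanishes by (CAC).
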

\begin{proof}
We use the full expansion   \eqref{eq:Epsi:full} of the conditional expectation and then apply recurrence \eqref{eq:recurrenceboolcum} for the Boolean cumulants as follows:
  \begin{multline*}
    \varphi(\IE_X^\psi[X_0Y_1X_1\dotsm Y_nX_n])\\
    \begin{aligned}
      &= \sum_{p=0}^{n}\sum_{0=i_0 < i_1<i_2<\dots< i_p=n}
      \varphi( X_{0}X_{i_1}\dotsm X_{i_{p-1}}X_n)\,
      \beta^\psi(W_1)\beta^\psi(W_2)\dotsm\beta^\psi(W_p)
      \\
      &= \sum_{p=0}^{n}    \sum_{0 < i_1<i_2<\dots< i_p= n}
      \sum_{k=0}^p \beta^\varphi_{k+1}(X_0,X_{i_1},\dots,X_{i_k})\,
      \varphi( 
      X_{i_{k+1}}\dotsm X_{i_{p-1}}X_n)\,
      \beta^\psi(W_1)\beta^\psi(W_2)\dotsm\beta^\psi(W_p)
      \\
      &=
      \begin{multlined}[t]
        \sum_{s=0}^{n}
        \sum_{\substack{0 < i_1<i_2<\dots< i_k=s\\
            s < i'_1<i'_2<\dots< i'_l<n}
        }
        \beta^\varphi_{k+1}(X_0,X_{i_1},\dots,X_{i_{k-1}}X_s)\,
        \beta^\psi(W_1)\beta^\psi(W_2)\dotsm    \beta^\psi(W_k)
        \\
        \times
        \varphi(X_{i'_1}\dotsm X_{i'_l}X_n)
        \beta^\psi(W'_0)\beta^\psi(W'_1)\dotsm    \beta^\psi(W'_l)
      \end{multlined}
      \\
      &= \sum_{s=0}^{n} \beta^\varphi(X_0Y_1X_1\dotsm X_s)\,
      \varphi(\E_X^\psi[Y_{s+1}X_{s+1}\dotsm Y_nX_n])
    \end{aligned}
  \end{multline*}
  where
  for $j\geq1$ the factors are
  $W_j=Y_{i_{j-1}+1}X_{i_{j-1}+1}\dotsm Y_{i_j}$,
  $W'_0=Y_{s+1}X_{s+1}\dotsm Y_{i'_1}$ and
  $W'_j=Y_{i'_{j}+1}X_{i'_{j}+1}\dotsm Y_{i'_{j+1}}$.
  At the end  we used
  VNRP \eqref{eq:VNRP:cfree} and 
  the full expansion   \eqref{eq:Epsi:full} of
  the conditional expectation $E^\psi$.
\end{proof}

\begin{proof}[Pictorial proof]
  \begin{align*}
    \varphi(E_X^\psi[X_0Y_1X_1\dotsm Y_nX_n])
    &=  \sum
\begin{tikzpicture}[anchor=base,baseline]
        \node at (-1,0.8) {$\varphi$};
        \draw (-1,-0.4)--(-1,0.75)--(7,0.75)--(7,-0.4);
        \draw (1,-0.4)--(1,0.75);
        \draw (3,-0.4)--(3,0.75);
        \draw (5,-0.4)--(5,0.75);
\node at (-1,-0.8){$X_0$};
        \node at (1,-0.8){$X_{i_1}$};
        \node at (3.1,-0.8){$X_{i_2}$};
        \node at (5.3,-0.8){$X_{i_{k-1}}$};
        \node at (7,-0.8){$X_{n}$};
\paczek{0.0,0.0}{\beta^\psi}
        \paczek{2,0.0}{\beta^\psi}
        \node at (4,0.0){$\cdots$};
        \paczek{6,0.0}{\beta^\psi}
      \end{tikzpicture}
    \\
    &=
      \begin{multlined}[t]
        \sum\sum_s
        \begin{tikzpicture}[anchor=base,baseline]
          \node at (-1,0.8) {$\beta^\varphi$};
          \draw (-1,-0.4)--(-1,0.75)--(7,0.75)--(7,-0.4);
          \draw (1,-0.4)--(1,0.75);
          \draw (3,-0.4)--(3,0.75);
          \draw (5,-0.4)--(5,0.75);
\node at (-1,-0.8){$X_0$};
          \node at (1,-0.8){$X_{i_1}$};
          \node at (3.1,-0.8){$X_{i_2}$};
          \node at (5.3,-0.8){$X_{i_{k-1}}$};
          \node at (7,-0.8){$X_{s}$};
\paczek{0.0,0.0}{\beta^\psi}
          \paczek{2,0.0}{\beta^\psi}
          \node at (4,0.0){$\cdots$};
          \paczek{6,0.0}{\beta^\psi}
        \end{tikzpicture}
        \\
        \times
        \begin{tikzpicture}[anchor=base,baseline]
          \paczek{0.0,0.0}{\beta^\psi}
        \end{tikzpicture}
        \begin{tikzpicture}[anchor=base,baseline]
          \node at (-1,0.8) {$\varphi$};
          \draw (-1,-0.4)--(-1,0.75)--(7,0.75)--(7,-0.4);
          \draw (1,-0.4)--(1,0.75);
          \draw (3,-0.4)--(3,0.75);
          \draw (5,-0.4)--(5,0.75);
\node at (-1,-0.8){$X_{i'_1}$};
          \node at (1,-0.8){$X_{i'_2}$};
          \node at (3.1,-0.8){$X_{i_2}$};
          \node at (5.3,-0.8){$X_{i_{k-1}}$};
          \node at (7,-0.8){$X_{n}$};
\paczek{0.0,0.0}{\beta^\psi}
          \paczek{2,0.0}{\beta^\psi}
          \node at (4,0.0){$\cdots$};
          \paczek{6,0.0}{\beta^\psi}
        \end{tikzpicture}
      \end{multlined}
\end{align*}
  and we recognize both VNRP   \eqref{eq:VNRPphi:pic} and   the expansion of the
  conditional expectation  \eqref{eq:Epsi:full:pic} with $X_0=1$.
\end{proof}

\begin{proof}[Proof of Theorem~\ref{thm:rEphi:recurrence}]
  We proceed by induction on the block length of a word.
  For words of block length at most 1, both required properties follow directly from the definition.
  
  Let us prove invariance property~\ref{def:it:invariance}
  from Definition~\ref{def:rightcondexp}. For $W=X_1 Y_1 \cdots X_n Y_n X_n$ 
  Assume that both \eqref{eq:rEphi[YW]:rec} and \eqref{eq:rEphiX[XW]}
  hold for words of length smaller than the length of $W$.
  
  First we prove the invariance property~\ref{def:it:invariance}
  for a word which starts with $Y$.
  To this end we plug it into the  recurrence   \eqref{eq:rEphi[YW]:rec}
  and observe that the words appearing on the  right hand side of the latter
  have block length  strictly smaller than that of the original word $W$ and thus
  the induction hypothesis applies:
  \begin{align*}
    \varphi(\rE_X^\varphi[Y_1X_1Y_2\dotsm Y_nX_n])
    &=
      \varphi\bigl(\sum_{k=1}^n
      \beta^{\varphi}_{2k-1}(Y_1,X_1,Y_2,\dots, Y_k)
      \rE_X^\varphi [X_{k}Y_{k+1}\dotsm Y_nX_n]\bigr)\\
    &=\sum_{k=1}^n
      \beta^{\varphi}_{2k+1}(Y_1,X_1,Y_2,\dots, Y_k)\,
      \varphi(X_{k}Y_{k+1}\dotsm Y_nX_n) \\
    &=\varphi(Y_1X_1Y_2\dotsm Y_nX_n),  	
  \end{align*}
  where in the final line we used the fact that
  according to property (CAC) from Remark and Definition~\ref{remdef:fCAC}
  every other cumulant vanishes 
  in the  recurrence~\eqref{eq:recurrenceboolcum}.
For a word starting with $X$ the analogous method leaves us with
  \begin{multline*}
    \varphi
    (\rE_{X}[X_0Y_1X_1Y_2\dotsm Y_nX_n])\\
    =
\sum_{k=0}^{n-1}
        \beta^{\varphi}_{2k+1}(X_0,Y_1,X_1,Y_2,\dots, X_k)\,
        \varphi(\rE_X^\varphi [Y_{k+1}X_{k+1}\dotsm Y_n]X_n)
+ \varphi(\IE_X^\psi[X_0Y_1X_1\dotsm Y_n]X_n)
         \\
        - \sum_{k=0}^{n-1} \beta^\varphi_{2k+1}(X_0,Y_1,X_1,\ldots,X_k)\,
        \varphi(\E_X^\psi[Y_{k+1}X_{k+1}\dotsm Y_n]X_n).
\end{multline*}
From the preceding lemma we infer that the last two terms combine to
    the cumulant $\beta^{\varphi}_{2n+1}(X_0,Y_1,X_1,Y_2,\dots, X_n)$
and applying the induction hypothesis to the first term we arrive
    at the Boolean recurrence   \eqref{eq:recurrenceboolcum} and conclude
\begin{align*}
      \varphi
    (\rE^\varphi_{X}[X_0Y_1X_1Y_2\dotsm Y_nX_n])
    &=
    \begin{multlined}[t]
      \sum_{k=0}^{n-1}
      \beta^{\varphi}_{2k+1}(X_0,Y_1,X_1,Y_2,\dots, X_k)
      \,
      \varphi( Y_{k+1}X_{k+1}\dotsm Y_nX_n)
      \\
      + \beta^{\varphi}_{2n+1}(X_0,Y_1,X_1,Y_2,\dots, Y_n,X_n)
    \end{multlined}
    \\ 
    &=\varphi(X_0Y_1X_1Y_2\dotsm Y_nX_n). 
  \end{align*}

In order to prove \eqref{eq:rEphiX[W]:LX}, observe that if $W\in \alg{M}$
  is a monomial starting with $X$, then by definition of $\beta_Y^{b,\varphi}$
  every term  of  \eqref{eq:rEphi[YW]:rec} except the last one is annihilated
  and we have
  \[\rE_{X}^\varphi[W] = \bigl(\beta_Y^{b,\varphi}\otimes \rE_X^{\varphi} \bigr)[1\otimes W].
  \]
  In particular, if we write $\bubbetas{\varphi}{Y} = \bbetas{\varphi}{Y} - \epsilon$, we have that
  \begin{equation}
    \label{eq:monomialX}
    0 = \bubbetas{\varphi}{Y}(W) + \bigl( \bubbetas{\varphi}{Y} \otimes \rE_X^\varphi \bigr)[\rdelta_XW].
  \end{equation}
  On the other hand, from \eqref{eq:rEphiX[XW]} we observe that for any $W\in \alg{M}$:
  \begin{align}
    \IE_X^\psi[W] +
    \bigl(\bbetas{\varphi}{X}\otimes(\rE^\varphi_X-\IE_X^\psi)\bigr)[\rdelta_YW]
    \nonumber
    &= 
      \bigl(\bubbetas{\varphi}{X}\otimes(\rE^\varphi_X-\IE_X^\psi)\bigr)[\rdelta_YW] + (\epsilon\otimes \rE_X^\varphi)[\rdelta_YW] \\ &\phantom{xxx}+\IE_X^\psi[W]- (\epsilon\otimes \IE_X^\psi)[\rdelta_YW].\label{eq:monomialY}
  \end{align}
  Now, from the definition of $\rdelta_YW$ it is easy to see that
  \[
    (\epsilon\otimes \rE^\varphi_X)[\rdelta_YW]
    =
    \begin{cases}
      0 & \text{if $W=XW'$ or $W=1$} 
      \\
      \rE_X^\varphi[W] & \text{if $W=YW'$}
    \end{cases}
  \]
  and
  \begin{align*}
    \IE^\psi_X[W] 
    - (\epsilon\otimes \IE^\psi_X)[\rdelta_YW]
    &=
      \begin{cases}
        \IE^\psi_X[W] & \text{if $W=XW'$ or $W=1$}
        \\
        0 & \text{if $W=YW'$}
      \end{cases}
    \\
    &= \epsilon(W) + X\E^\psi_X[L_XW].
  \end{align*}
  Since $ \bigl(\bubbetas{\varphi}{X}\otimes(\rE^\varphi_X-\IE_X^\psi)\bigr)[\rdelta_YW] =0$ when $W$ is a monomial starting with $X$, we get that \eqref{eq:rEphiX[XW]} also holds for any $W\in \alg{M}$. Finally, we can combine the above equations so that, for any $W\in \alg{M}$ we conclude that
  \begin{align*}
    \rE^\varphi_X[W]
&= \bubbetas{\varphi}{Y}(W)
        +   (\bubbetas{\varphi}{Y}\otimes\rE^\varphi_X)[\rdelta_XW] 
        +   
        \bigl(
        \bubbetas{\varphi}{X}\otimes(\rE^\varphi_X-\IE^\psi_X)
        \bigr)[\rdelta_YW]
        + \IE^\psi_X[W] 
        - \epsilon\otimes \IE^\psi_X[\rdelta_YW]
    \\
      &= \bbetas{\varphi}{Y}(W)+   (\bubbetas{\varphi}{Y}\otimes\rE^\varphi_X)[\rdelta_XW] 
        +   
        \bigl(
        \bubbetas{\varphi}{X}\otimes(\rE^\varphi_X-\IE^\psi_X)
        \bigr)[\rdelta_YW]
        + X\IE^\psi_X[L_XW].
  \end{align*}
\end{proof}

The following result from \cite{LehnerSzpojankowski:2023:freeint1} explains how to compute the conditional expectation of the resolvent for free random variables $X$ and $Y$.

\begin{proposition}
  \label{prop:6.13}
  Assume that the random variables $X$ and $Y$ are free in the
  noncommutative probability  space 
  $(\mathbb{C}\langle X,Y\rangle,\psi)$.
  Then with the notations from Proposition~\ref{lem:SeveralEquations} the
  conditional expectation of the resolvent
  $\bPsi = (I - z(AX+BY))^{-1}$ 
  is given by 
  \begin{align}
    \label{eq:EpsiX[Psi]=bbetapsi1}
    \IE^\psi_X[\bPsi]
    &= \bigl(
      I-z\bbetas{\psi}{Y}(\bPsi)AX
      \bigr)^{-1}
      \bbetas{\psi}{Y}(\bPsi)
    \\
    \label{eq:EpsiX[Psi]=bbetapsi2}
    &= \bbetas{\psi}{Y}(\bPsi)
      \bigl(
      I-zAX\bbetas{\psi}{Y}(\bPsi)
      \bigr)^{-1}\\
    &=(I-zAX-zBF_Y)^{-1}.
      \label{eq:EpsiX[Psi]=(I-zAXpzBFY)}
  \end{align}
\end{proposition}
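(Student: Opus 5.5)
Apply the recurrence of Theorem~\ref{thm:FreeIntCalc4.12}, amplified to $M_N(\IC\langle X,Y\rangle)$, to the resolvent $\bPsi$. The first step is to compute the partial deconcatenations of $\bPsi$. From $\bPsi = I + z\bPsi(AX+BY)$ and the Leibniz rule (or directly from \eqref{eq:DA-1}) we get
\[
  \ldelta_X(\bPsi) = z\,\bPsi A X \odot \bPsi,
  \qquad
  \rdelta_X(\bPsi) = z\,\bPsi \odot AX\bPsi,
\]
exactly as in the proof of Proposition~\ref{lem:SeveralEquations}. We then plug the first into the second line of \eqref{eq:condexprec}. Since $E^\psi_X$ has the left module property over $\IC\langle X\rangle$, and the amplification is a bimodule map over scalar matrices,
\[
  E^\psi_X[\bPsi] = \bbetas{\psi}{Y}(\bPsi) + z\,E^\psi_X[\bPsi]\,A X\,\bbetas{\psi}{Y}(\bPsi).
\]
Solving this linear equation gives $E^\psi_X[\bPsi]\,(I - zAX\bbetas{\psi}{Y}(\bPsi)) = \bbetas{\psi}{Y}(\bPsi)$, which is \eqref{eq:EpsiX[Psi]=bbetapsi2}. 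The inverse exists as a formal power series in $z$. Symmetrically, the first line of \eqref{eq:condexprec} with $\rdelta_X$ and the right module property gives $E^\psi_X[\bPsi] = \bbetas{\psi}{Y}(\bPsi) + z\,\bbetas{\psi}{Y}(\bPsi) A X\, E^\psi_X[\bPsi]$, which is \eqref{eq:EpsiX[Psi]=bbetapsi1}. The two expressions also agree directly, by the push-through identity $(I-zHAX)^{-1}H = H(I-zAXH)^{-1}$ with $H=\bbetas{\psi}{Y}(\bPsi)$.

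For the last form we write $H_Y = \bbetas{\psi}{Y}(\bPsi)$ and use \eqref{eq:HYAdditive}, i.e.\ $H_Y = (I - zBF_Y)^{-1}$. Then
\[
  E^\psi_X[\bPsi] = \bigl(I - zH_Y AX\bigr)^{-1} H_Y
  = \bigl(H_Y^{-1}(I - zH_YAX)\bigr)^{-1}
  = \bigl(H_Y^{-1} - zAX\bigr)^{-1}
  = (I - zBF_Y - zAX)^{-1},
\]
which is \eqref{eq:EpsiX[Psi]=(I-zAXpzBFY)}.

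The main obstacle is justifying the use of module properties inside the matrix amplification. The tensor leg carrying $E^\psi_X$ contains the noncommuting factor $\bPsi AX$, where $A$ is scalar but $X$ is an algebra element. We must check that $(E^\psi_X\otimes\bbetas{\psi}{Y})^{(N)}[\bPsi AX\odot\bPsi] = E^\psi_X[\bPsi]AX\cdot\bbetas{\psi}{Y}(\bPsi)$ entrywise. This holds because $E^\psi_X$ is a right $\IC\langle X\rangle$-module map (Proposition~\ref{prop:freecondexpX}) and $X$ commutes with scalar matrix entries. A second technical point is that all identities are between formal power series in $z$ with matrix polynomial coefficients, so each inversion is legitimate because the constant terms are $I$.
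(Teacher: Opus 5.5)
Your proof is correct and follows essentially the paper's route. The paper likewise applies the $\rdelta_X$ line of \eqref{eq:condexprec} to $\rdelta_X\bPsi = z\bPsi\odot AX\bPsi$, pulls $AX$ out of $E^\psi_X$ and solves to obtain \eqref{eq:EpsiX[Psi]=bbetapsi1}. Your $\ldelta_X$ computation, the push-through identity and the use of \eqref{eq:HYAdditive} then supply the other two forms, which the paper does not derive explicitly.

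One small slip: in your first two paragraphs the module-property labels are swapped. Pulling $AX$ out to the right of $E^\psi_X$ uses the right module property, and pulling it out to the left uses the left one, as you correctly say at the end. Since $E^\psi_X$ is a bimodule map, this does not affect the argument.
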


\begin{proof}
  Using \eqref{eq:condexprec}, we have
  \begin{align*}
    E_X^\psi[\bPsi] &= \beta_Y^{b,\psi}(\bPsi) + z(\beta_Y^{b,\psi}\otimes E_X^\psi)[\bPsi\odot AX\bPsi]
    \\ &= \beta_Y^{b,\psi}(\bPsi)\bigl(I + zAXE_X^\psi[\bPsi]\bigr).
  \end{align*}
  Hence
  \[
    E_X^\psi[\bPsi] = \bigl(I- z \beta_Y^{b,\psi}(\bPsi)AX\bigr)^{-1}\beta_Y^{b,\psi}(\bPsi).
  \]
\end{proof}

\begin{remark}
  In the framework of the previous proposition, we have that the respective
  \emph{subordination functions}   \cite{Biane98}   are given by
  \begin{align*}
    \omega_X(1/z) &= (z\bbetas{\psi}{Y}(\bPsi))^{-1},\\
    \omega_Y(1/z) &= (z\bbetas{\psi}{X}(\bPsi))^{-1}.
  \end{align*}
\end{remark}

The last theorem of this section generalizes Proposition~\ref{prop:6.13} to the c‑free setting, i.e., it shows how to compute the right quasi‑conditional expectation for c‑free elements $X$ and $Y$.

\begin{theorem}
Let $(\mathbb{C}\langle X,Y\rangle,\varphi,\psi)$ be a two-state
  noncommutative probability space such that $X$ and $Y$ are $c$-free.  Assume
  that the resolvent of a given polynomial $P \in \mathbb{C}\langle X,Y\rangle$
  of degree $m$ has a linearization given by $\Psi = (1-z^mP)^{-1} = u^t\bPsi
  v$, where $$\bPsi = (I-z(AX+BY))^{-1},$$ with $A,B\in
  M_N(\mathbb{C})$
  and $u,v\in \mathbb{C}^N$.
  Then
  \begin{align}
    \rE_X^\varphi[\bPsi]
    &=
      \bigl(
      I-zAF_X^\varphi-zBF_Y^\varphi
      \bigr)^{-1}
      \bigl(
      I  -
      zAF_X^\varphi-zBF_Y
      \bigr)
      (I-zAX-zBF_Y)^{-1}.
      \label{eq:AX+BY:rEphiX[Psi]=AFX} 
  \end{align}
\end{theorem}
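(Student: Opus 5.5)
The plan is to avoid the full recurrence \eqref{eq:rEphiX[W]:LX}. Its term $(\bubbetas{\varphi}{Y}\otimes\rE^\varphi_X)[\rdelta_X\bPsi]$ produces $\rE^\varphi_X[AX\bPsi]$, and since only the right module property is available, $X$ cannot be pulled out of it. Instead I will split $\bPsi$ by its first letter. The resolvent identity \eqref{eq:resolventIdentity} gives
\[
\rE^\varphi_X[\bPsi]=I+zA\,\rE^\varphi_X[X\bPsi]+zB\,\rE^\varphi_X[Y\bPsi].
\]
I then treat the two unknowns $R_X:=\rE^\varphi_X[X\bPsi]$ and $R_Y:=\rE^\varphi_X[Y\bPsi]$ with the amplified recurrences of Theorem~\ref{thm:rEphi:recurrence}. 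Recurrence \eqref{eq:rEphi[YW]:rec} applies to words starting with $Y$, and \eqref{eq:rEphiX[XW]} applies to words starting with $X$. I run the same scheme for $\IE^\psi_X$ via Theorem~\ref{thm:FreeIntCalc4.12}, and use Proposition~\ref{prop:6.13} to identify $\IE^\psi_X[\bPsi]=(I-zAX-zBF_Y)^{-1}$.

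First I compute the deconcatenations, exactly as in the proof of Proposition~\ref{lem:SeveralEquations}:
\[
\rdelta_X(Y\bPsi)=zY\bPsi A\odot X\bPsi,\qquad \rdelta_Y(X\bPsi)=zX\bPsi B\odot Y\bPsi.
\]
Inserting these into \eqref{eq:rEphi[YW]:rec} and \eqref{eq:rEphiX[XW]} and using the matrix bimodule property of the amplified functionals gives a closed linear system:
\begin{align*}
R_Y&=\bbetas{\varphi}{Y}(Y\bPsi)\bigl(I+zA R_X\bigr),\\
R_X&=\IE^\psi_X[X\bPsi]+z\,\bbetas{\varphi}{X}(X\bPsi)B\bigl(R_Y-\IE^\psi_X[Y\bPsi]\bigr).
\end{align*}
The free analogue $\IE^\psi_X[Y\bPsi]=\bbetas{\psi}{Y}(Y\bPsi)\bigl(I+zA\,\IE^\psi_X[X\bPsi]\bigr)$ follows from \eqref{eq:condexprec}. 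Also $\IE^\psi_X[X\bPsi]=X\,\IE^\psi_X[\bPsi]$ by the bimodule property of the free conditional expectation, so this quantity is known explicitly.

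Next I express the scalar block cumulants through the matrices $F$. By CAC, $\bbetas{\varphi}{Y}(Y\bPsi)$ only sees words beginning and ending in $Y$. Applying \eqref{eq:intro_betas_prod_as_entries} with $\ldelta_Y$ in the $c$-free version (Proposition~\ref{thm:FIC5.5}(iii)) should give
\[
\bbetas{\varphi}{Y}(Y\bPsi)=F_Y^\varphi H_Y^\varphi,\qquad H_Y^\varphi=(I-zBF_Y^\varphi)^{-1},
\]
and similarly $\bbetas{\varphi}{X}(X\bPsi)=F^\varphi_XH^\varphi_X$ and $\bbetas{\psi}{Y}(Y\bPsi)=F_YH_Y$. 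The precise left/right ordering of these factors has to be fixed by checking the deconcatenation carefully.

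Finally I solve for $D:=\rE^\varphi_X[\bPsi]-\IE^\psi_X[\bPsi]$ rather than for $\rE^\varphi_X[\bPsi]$ itself. Subtracting the free system from the $c$-free one, the $\IE^\psi_X[X\bPsi]$ terms cancel. The result should be a linear equation of the form $(I-zAF^\varphi_X-zBF^\varphi_Y)D=z\bigl(B F^\varphi_Y-BF_Y+\dots\bigr)\IE^\psi_X[\bPsi]$. Adding $\IE^\psi_X[\bPsi]$ back and factoring then yields \eqref{eq:AX+BY:rEphiX[Psi]=AFX}.

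Two sanity checks will guide the algebra. Applying $\varphi$ must recover Theorem~\ref{thm:cfreeConvolution}. The case $\varphi=\psi$ must collapse to Proposition~\ref{prop:6.13}.

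The main obstacle I expect is this last elimination step. It requires resolving the products $\bbetas{\varphi}{Y}(Y\bPsi)A\,\rE^\varphi_X[X\bPsi]$, in which noncommuting matrices and the operator $X$ appear. It also requires using the subordination identities \eqref{eq:Fxphi=betasubordination}, \eqref{eq:Fyphi=betasubordination} and \eqref{eq:HXphi=1-zBFXphi} to turn $H^\varphi_X,H^\varphi_Y$ into the single prefactor $(I-zAF_X^\varphi-zBF_Y^\varphi)^{-1}$. If the direct elimination becomes unwieldy, I will instead verify the claimed formula. I will check that the right-hand side $R$ of \eqref{eq:AX+BY:rEphiX[Psi]=AFX} satisfies the linear system above, and then invoke uniqueness of the solution as a formal power series in $z$, which holds because the system is of the form $R=R_0+z(\cdots)R$.
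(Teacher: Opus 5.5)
Your proposal is correct, and the elimination you only sketch does close. Your route differs somewhat from the paper's.

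The paper applies the combined recurrence \eqref{eq:rEphiX[W]:LX} to $\bPsi$ itself and \eqref{eq:rEphi[YW]:rec} to $Y\bPsi$. It keeps $\rE^\varphi_X[X\bPsi]$ as an unknown and removes it via the resolvent identity. This first gives the intermediate formula \eqref{eq:AX+BY:rEphiX[Psi]=bbeta*res} in terms of $\bbetas{\varphi}{X}(\bPsi)$, $\bbetas{\varphi}{Y}(\bPsi)$, $\bbetas{\psi}{Y}(\bPsi)$, which is only then converted to the $F$'s. So the obstruction you attribute to \eqref{eq:rEphiX[W]:LX} is not real: neither route ever pulls $X$ out of $\rE^\varphi_X[X\bPsi]$, and you also keep it as the unknown $R_X$.

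Your route applies \eqref{eq:rEphi[YW]:rec} and \eqref{eq:rEphiX[XW]} separately and subtracts the free system. This is the technique the paper uses for $XY$ in Section~\ref{sec:sigmatransform} (cf.\ \eqref{eq:rEMult}), and here it is shorter.
\begin{itemize}
\item With $D_X=R_X-X\IE^\psi_X[\bPsi]$ and $D_Y=R_Y-\IE^\psi_X[Y\bPsi]$, one gets $D_X=z\bbetas{\varphi}{X}(X\bPsi)BD_Y$. Hence $D=zAD_X+zBD_Y=zH^\varphi_XBD_Y$.
\item The $\IE^\psi_X[X\bPsi]$-term does not cancel completely. It is absorbed through $I+zAX\IE^\psi_X[\bPsi]=(I-zBF_Y)\IE^\psi_X[\bPsi]$ (Proposition~\ref{prop:6.13}).
\item Only the combinations $zB\bbetas{\varphi}{Y}(Y\bPsi)=H^\varphi_Y-I$, $zA\bbetas{\varphi}{X}(X\bPsi)=H^\varphi_X-I$ and $zB\bbetas{\psi}{Y}(Y\bPsi)=H_Y-I$ occur. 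These follow immediately by applying the block functionals to \eqref{eq:resolventIdentity}, so the ordering issue you flag is moot.
\item Setting $a=zAF^\varphi_X$, $b=zBF^\varphi_Y$, $b_0=zBF_Y$, the elimination yields exactly $(I-a-b)D=(b-b_0)\IE^\psi_X[\bPsi]$. Nothing hides in your ``$\dots$'', this is the claim, and the verification-plus-uniqueness fallback is not needed.
\end{itemize}

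The paper's longer route gives the block-cumulant form \eqref{eq:AX+BY:rEphiX[Psi]=bbeta*res} as a by-product. Yours lands directly on the $F$-form, without inverting $\bbetas{\varphi}{Y}(\bPsi)$ and converting back.
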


\begin{proof}
  We start by proving an auxiliary formula:
  \begin{align}
    \rE_X^\varphi[\bPsi]
    &=
      \bbetas{\varphi}{Y}(\bPsi)
      \bigl(
      I - \bubbetas{\varphi}{X}(\bPsi)
      \bubbetas{\varphi}{Y}(\bPsi)
      \bigr)^{-1}
      \bigl(
      I
      -
      \bubbetas{\varphi}{X}(\bPsi)
      \bubbetas{\psi}{Y}(\bPsi)
      \bigr)
      \bigl(
      I-zAX\bbetas{\psi}{Y}(\bPsi)
      \bigr)^{-1}.
      \label{eq:AX+BY:rEphiX[Psi]=bbeta*res}
  \end{align}
  First it follows from 
  \eqref{eq:DA-1}
  that
  $$
  \rdelta_X\bPsi = z\bPsi A\odot X\bPsi,
  \qquad
  \rdelta_Y\bPsi = z\bPsi B\odot Y\bPsi.
  $$
  Secondly we apply  $\rE^\varphi_X$ to the resolvent identity
  \eqref{eq:resolventIdentity} and obtain
  \begin{equation}
    \label{eq:AX+BY:resid}
    \rE^\varphi_X[\bPsi] = I + zA\rE^\varphi_X[X\bPsi] + zB\rE^\varphi_X[Y\bPsi]
    .
  \end{equation}
  Next we apply the recursions from Theorem~\ref{thm:rEphi:recurrence} to each
  term of    \eqref{eq:AX+BY:resid}.
  More precisely, the recurrence  in Equation \eqref{eq:rEphiX[W]:LX} yields
  \begin{equation}
    \label{eq:AX+BY:rEphiX[Psi]:rec}
    \rE^\varphi_X[\bPsi] = \bbetas{\varphi}{Y}(\bPsi)
    + z  \bubbetas{\varphi}{Y}(\bPsi)A\rE^\varphi_X[X\bPsi]
    + z \bubbetas{\varphi}{X}(\bPsi)B
    \bigl(
    \rE^\varphi_X[Y\bPsi] - E^\psi_X[Y\bPsi]
    \bigr) + zAX E^\psi_X[\bPsi]
  \end{equation}
  Now apply recurrence \eqref{eq:rEphi[YW]:rec} to obtain
  \begin{align*}
    \rE^\varphi_X[Y\bPsi]
    &= \bbetas{\varphi}{Y}(Y\bPsi)
      + z\bbetas{\varphi}{Y}(Y\bPsi)A  \rE^\varphi_X[X\bPsi]\\
    &= \bbetas{\varphi}{Y}(Y\bPsi)
      \bigl(
      I + zA\rE^\varphi_X[X\bPsi]
      \bigr)\\
    &= \bbetas{\varphi}{Y}(Y\bPsi)
      \bigl(
      \rE^\varphi_X[\bPsi] - zB\rE^\varphi_X[Y\bPsi]
      \bigr),
  \end{align*}
  where we have used \eqref{eq:AX+BY:resid} in the last equality. Thus, it follows that
  \begin{equation*}
    \bigl(I+z\bbetas{\varphi}{Y}(Y\bPsi)B\bigr)  \rE^\varphi_X[Y\bPsi] = \bbetas{\varphi}{Y}(Y\bPsi)    \rE^\varphi_X[\bPsi]
  \end{equation*}
  and therefore
  \begin{equation}
    \label{eq:AX+BY:rEphiX[YPsi]}
    \rE^\varphi_X[Y\bPsi] =  
    \bigl(
    I+z\bbetas{\varphi}{Y}(Y\bPsi)B
    \bigr)^{-1}
    \bbetas{\varphi}{Y}(Y\bPsi)    \rE^\varphi_X[\bPsi].
  \end{equation}
  Furthermore, the previous equation implies that
  \begin{align*}
    zB  \rE^\varphi_X[Y\bPsi]
    &=    \bigl(
      I+zB\bbetas{\varphi}{Y}(Y\bPsi)
      \bigr)^{-1}
      zB\bbetas{\varphi}{Y}(Y\bPsi)    \rE^\varphi_X[\bPsi]
    \\
    &=    \bigl(
      I-(I+zB\bbetas{\varphi}{Y}(Y\bPsi))^{-1}
      \bigr)
      \rE^\varphi_X[\bPsi]
    \\
    &= \bigl(I-\bbetas{\varphi}{Y}(\bPsi)^{-1} \bigr)
      \rE^\varphi_X[\bPsi]
  \end{align*}
  where the last equality follows by applying $\bbetas{\varphi}{Y}$ to the
  resolvent identity \eqref{eq:resolventIdentity}. In particular, we get
  \[
    \rE_{X}^{\varphi}[\bPsi] - zB\rE_X^\varphi[Y\bPsi] = \beta_Y^{b,\varphi}(\bPsi)^{-1}\rE_X^{\varphi}[\bPsi].
  \]
  On the other hand, from the above equation and \eqref{eq:AX+BY:resid} we obtain
  \begin{align*}
    zA\rE^\varphi_X[X\bPsi]
    &= \rE^\varphi_X[\bPsi] - zB\rE^\varphi_X[Y\bPsi]  - I \\
    &=    \bbetas{\varphi}{Y}(\bPsi)^{-1}    \rE^\varphi_X[\bPsi] - I
      .
  \end{align*}
  Plugging everything into
  \eqref{eq:AX+BY:rEphiX[Psi]:rec}
  we obtain
  \begin{multline}
    \label{eq:rEvarphi_XPlugging}
    \rE^\varphi_X[\bPsi]
    = \bbetas{\varphi}{Y}(\bPsi)
    +   \bubbetas{\varphi}{Y}(\bPsi)
    \bigl(
    \bbetas{\varphi}{Y}(\bPsi)^{-1}    \rE^\varphi_X[\bPsi] - I
    \bigr)
    \\
    +  \bubbetas{\varphi}{X}(\bPsi)
    \bigl(
    \bigl(
    I-\bbetas{\varphi}{Y}(\bPsi)^{-1} 
    \bigr)
    \rE^\varphi_X[\bPsi]   
    -
    zBE^\psi_X[Y\bPsi]
    \bigr)
    + zAX E^\psi_X[\bPsi]    
  \end{multline}
  Now we focus on $zBE_X^\psi[Y\bPsi]$.  Applying $\E_X^\psi$ to the resolvent identity~\eqref{eq:resolventIdentity}, we obtain
  $$
  zBE^\psi_X[Y\bPsi] = E^\psi_X[\bPsi] - zAXE^\psi_X[\bPsi] - I,
  $$
  so that we can substitute it into \eqref{eq:rEvarphi_XPlugging} as
  \begin{multline*}
    \rE^\varphi_X[\bPsi]
    = \bbetas{\varphi}{Y}(\bPsi)
    +   \bubbetas{\varphi}{Y}(\bPsi)
    \bigl(
    \bbetas{\varphi}{Y}(\bPsi)^{-1}    \rE^\varphi_X[\bPsi] - I
    \bigr)
    +  \bubbetas{\varphi}{X}(\bPsi)
    \bigl(
    I-\bbetas{\varphi}{Y}(\bPsi)^{-1} 
    \bigr)
    \rE^\varphi_X[\bPsi]
    \\
    -
    \bubbetas{\varphi}{X}(\bPsi)
    \bigl(
    (I-zAX)E^\psi_X[\bPsi]-I
    \bigr)
    + zAX E^\psi_X[\bPsi] .
  \end{multline*}
  Next, we can rearrange the previous equation as follows:
  \begin{multline}
    \label{eq:rEphiX[Psi]:equation}
    \bigl(
    I-
    \bubbetas{\varphi}{Y}(\bPsi)
    \bbetas{\varphi}{Y}(\bPsi)^{-1}
    -
    \bubbetas{\varphi}{X}(\bPsi)
    \bigl(
    I-\bbetas{\varphi}{Y}(\bPsi)^{-1} 
    \bigr)
    \bigr)
    \rE^\varphi_X[\bPsi]
    \\
    \begin{aligned}
&= \bbetas{\varphi}{Y}(\bPsi)
      -   \bubbetas{\varphi}{Y}(\bPsi)
      -
      \bubbetas{\varphi}{X}(\bPsi)
      \bigl(
      (I-zAX)E^\psi_X[\bPsi]
      \bigr)
      + \bubbetas{\varphi}{X}(\bPsi)
      + zAX E^\psi_X[\bPsi]    
      \\
&= I 
      + \bubbetas{\varphi}{X}(\bPsi)
      +
      \bigl(
      \bbetas{\varphi}{X}(\bPsi)zAX
      -
      \bubbetas{\varphi}{X}(\bPsi)
      \bigr)
      E^\psi_X[\bPsi] 
      \\
&= \bbetas{\varphi}{X}(\bPsi)
      +
      \bigl(
      \bbetas{\varphi}{X}(\bPsi)zAX
      -
      \bubbetas{\varphi}{X}(\bPsi)
      \bigr)
      \bbetas{\psi}{Y}(\bPsi)
      \bigl(
      I-zAX\bbetas{\psi}{Y}(\bPsi)
      \bigr)^{-1} \\
      &=
      \begin{multlined}[t]
        \bbetas{\varphi}{X}(\bPsi)
        +
        \bbetas{\varphi}{X}(\bPsi)
        \bigl(
        \bigl(
        I-zAX\bbetas{\psi}{Y}(\bPsi)
        \bigr)^{-1}
        -I
        \bigr)\\
        -
        \bubbetas{\varphi}{X}(\bPsi)
        \bbetas{\psi}{Y}(\bPsi)
        \bigl(
        I-zAX\bbetas{\psi}{Y}(\bPsi)
        \bigr)^{-1}
      \end{multlined}
      \\
      &=
      \bigl(
      \bbetas{\varphi}{X}(\bPsi)
      -
      \bubbetas{\varphi}{X}(\bPsi)
      \bbetas{\psi}{Y}(\bPsi)
      \bigr)
      \bigl(
      I-zAX\bbetas{\psi}{Y}(\bPsi)
      \bigr)^{-1}
      \\
      &=
      \bigl(
      I
      -
      \bubbetas{\varphi}{X}(\bPsi)
      \bubbetas{\psi}{Y}(\bPsi)
      \bigr)
      \bigl(
      I-zAX\bbetas{\psi}{Y}(\bPsi)
      \bigr)^{-1}
    \end{aligned}
  \end{multline}
  where in the third equality, we used the expression   \eqref{eq:EpsiX[Psi]=bbetapsi2} for $E^\psi_X[\bPsi]$.
  Finally, the prefactor on the left-hand side of \eqref{eq:rEphiX[Psi]:equation} can be written as
  \begin{multline*}
    I-
    \bubbetas{\varphi}{Y}(\bPsi)
    \bbetas{\varphi}{Y}(\bPsi)^{-1}
    -
    \bubbetas{\varphi}{X}(\bPsi)
    \bigl(
    I-\bbetas{\varphi}{Y}(\bPsi)^{-1} 
    \bigr)
    \\
    \begin{aligned}
      &=
      \bigl(
      \bbetas{\varphi}{Y}(\bPsi)
      - \bubbetas{\varphi}{Y}(\bPsi)
      - \bubbetas{\varphi}{X}(\bPsi)
      \bbetas{\varphi}{Y}(\bPsi)
      + \bubbetas{\varphi}{X}(\bPsi)
      \bigr)
      \bbetas{\varphi}{Y}(\bPsi)^{-1}
      \\
      &=
      \bigl(
      I - \bubbetas{\varphi}{X}(\bPsi)
      \bubbetas{\varphi}{Y}(\bPsi)
      \bigr)
      \bbetas{\varphi}{Y}(\bPsi)^{-1}
    \end{aligned}
  \end{multline*}
  and we conclude 
  \eqref{eq:AX+BY:rEphiX[Psi]=bbeta*res}.
  
  Now we proceed with the proof of 	\eqref{eq:AX+BY:rEphiX[Psi]=AFX}.
  First, observe that 
  \begin{align*}
    \bubbetas{\varphi}{X}(\bPsi)
    &= \bbetas{\varphi}{X}(\bPsi) - I\\
    &= (I-zAF_X^\varphi)^{-1} - I\\
    &= (I-zAF_X^\varphi)^{-1}zAF_X^\varphi\\
    &= \bbetas{\varphi}{X}(\bPsi)zAF_X^\varphi.
  \end{align*}
On the other hand, using that $$\beta_Y^{b,\varphi}(\bPsi)^{-1} = I - zBF_Y^\varphi,$$the coefficient of $\rE_X^\varphi[\bPsi]$ in \eqref{eq:rEphiX[Psi]:equation}  can be written as
  \begin{align*}
    \MoveEqLeft[4] I - \bubbetas{\varphi}{Y}(\bPsi) \bbetas{\varphi}{Y}(\bPsi)^{-1}
    - \bubbetas{\varphi}{X}(\bPsi) \bigl( I - \bbetas{\varphi}{Y}(\bPsi)^{-1} \bigr) \\
    &= I - zBF_Y^\varphi - zAF_X^\varphi(I-zAF_X^\varphi)^{-1}zBF_Y^\varphi \\
    &= I - (I-zAF_X^\varphi)^{-1}zBF_Y^\varphi \\
    &= (I-zAF_X^\varphi)^{-1} \bigl( I - zAF_X^\varphi - zBF_Y^\varphi \bigr) \\
    &= \bbetas{\varphi}{X}(\bPsi) M^\varphi_{AX+BY}(z)^{-1},
  \end{align*}
  where we have applied Equations \eqref{eq:HXphi=1-zBFXphi} and \eqref{eq:AX+BY:phi(Psi)} in the last step. Finally, the right-hand side in the third equality in \eqref{eq:rEphiX[Psi]:equation} is
  \begin{multline*}
    \bbetas{\varphi}{X}(\bPsi)
    +
    \bigl(
    \bbetas{\varphi}{X}(\bPsi)zAX
    -
    \bubbetas{\varphi}{X}(\bPsi)
    \bigr)
    \bbetas{\psi}{Y}(\bPsi)
    \bigl(
    I-zAX\bbetas{\psi}{Y}(\bPsi)
    \bigr)^{-1}
    \\
    \begin{aligned}[t]
      &=   \bbetas{\varphi}{X}(\bPsi)
      +
      \bigl(
      \bbetas{\varphi}{X}(\bPsi)zAX
      -
      \bbetas{\varphi}{X}(\bPsi)zAF_X^\varphi
      \bigr)
      \bbetas{\psi}{Y}(\bPsi)
      \bigl(
      I-zAX\bbetas{\psi}{Y}(\bPsi)
      \bigr)^{-1}
      \\
      &=   \bbetas{\varphi}{X}(\bPsi)
      \bigl(
      I-zAX\bbetas{\psi}{Y}(\bPsi)
      + zAX  \bbetas{\psi}{Y}(\bPsi)
      -
      zAF_X^\varphi  \bbetas{\psi}{Y}(\bPsi)
      \bigr)
      \bigl(
      I-zAX\bbetas{\psi}{Y}(\bPsi)
      \bigr)^{-1}
      \\
      &=   \bbetas{\varphi}{X}(\bPsi)
      \bigl(
      I  -
      zAF_X^\varphi(I-zBF_Y)^{-1}
      \bigr)
      \bigl(
      I-zAX(I-zBF_Y)^{-1}
      \bigr)^{-1}
      \\
      &=
      (I-zAF_X^\varphi-zBF_Y^\varphi)^{-1}
      \bigl(
      I  -
      zAF_X^\varphi-zBF_Y
      \bigr)
      \bigl(
      I-zAX-zBF_Y
      \bigr)^{-1}.
      \\
    \end{aligned}
  \end{multline*}
  We conclude by using \eqref{eq:HXphi=1-zBFXphi}, \eqref{eq:AX+BY:phi(rEphiX[Psi])=MAFX} and \eqref{eq:EpsiX[Psi]=(I-zAXpzBFY)}.
\end{proof}

\begin{remark}
  By using \eqref{eq:EpsiX[Psi]=bbetapsi2}, we can write \eqref{eq:AX+BY:rEphiX[Psi]=bbeta*res}
  as
  \begin{align}
    \rE_X^\varphi[\bPsi]
    &= \bbetas{\varphi}{Y}(\bPsi)
      \bigl(
      I - \bubbetas{\varphi}{X}(\bPsi)
      \bubbetas{\varphi}{Y}(\bPsi)
      \bigr)^{-1}
      \bigl(
      I
      -
      \bubbetas{\varphi}{X}(\bPsi)
      \bubbetas{\psi}{Y}(\bPsi)
      \bigr)
      \bbetas{\psi}{Y}(\bPsi)^{-1}
      E^\psi_X[\bPsi]
      \label{eq:AX+BY:rEphX[Psi]=prefactorEpsiX[Psi]}.    
  \end{align}
\end{remark}

\begin{proposition}
  With the notation and hypotheses of Theorem~\ref{thm:cfreeConvolution}, we have
  \begin{equation}
    \varphi(E^\psi_X[\bPsi])
    = (I-zAF^\varphi_X-zBF_Y)^{-1}
    \label{eq:AX+BY:phi(rEphiX[Psi])=MAFX}.
  \end{equation}
  As a consequence we obtain
  \begin{align}
    \rE_X^\varphi[\bPsi]
    &= M^\varphi_{AX+BY}(z)\varphi(E^\psi_X[\bPsi])^{-1}E^\psi_X[\bPsi].
      \label{eq:AX+BY:rEphiX[Psi]=MAX+BY}       
  \end{align}
  
\end{proposition}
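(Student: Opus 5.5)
We apply $\varphi$ to the closed form \eqref{eq:EpsiX[Psi]=(I-zAXpzBFY)} of Proposition~\ref{prop:6.13}, namely $\E^\psi_X[\bPsi]=(I-zAX-zBF_Y)^{-1}$. Here $F_Y$ is a scalar matrix (a formal power series in $z$ with coefficients in $M_N(\IC)$). Set $\tilde A := A(I-zBF_Y)^{-1}$. Then
\[
(I-zAX-zBF_Y)^{-1}=\bigl(I-z\tilde A X\bigr)^{-1}(I-zBF_Y)^{-1},
\]
so $\E^\psi_X[\bPsi]$ is a resolvent in the single variable $X$ with matrix coefficient $\tilde A$, multiplied on the right by a scalar matrix. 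The matrix-valued Boolean relation $M=(I-\eta)^{-1}$, applied to $\tilde A X$ under the amplified state $\varphi^{(N)}$, gives
\[
\varphi\bigl((I-z\tilde AX)^{-1}\bigr)=\bigl(I-z\tilde A\,\tilde\eta^\varphi_X(z\tilde A)\bigr)^{-1},
\]
where $\tilde\eta^\varphi_X(z\tilde A)=\sum_k\beta^\varphi_k(X)(z\tilde A)^{k-1}$. This identity is the one-variable case of the computation in the proof of Theorem~\ref{thm:cfreeConvolution} with $B=0$, or, equivalently, \eqref{eq:ProductsasEntriesCFree} with no $Y$ present.

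Next we identify the argument of $\tilde\eta^\varphi_X$. By \eqref{eq:HYAdditive} we have $(I-zBF_Y)^{-1}=H_Y$, so $z\tilde A=zAH_Y$. This is almost the argument $zH_YA$ appearing in \eqref{eq:Fxphi=betasubordination}; the two differ by conjugation. For a power series $f$ we have $A\,f(H_YA)=f(AH_Y)\,A$, hence
\[
z\tilde A\,\tilde\eta^\varphi_X(zAH_Y)=zA\,\tilde\eta^\varphi_X(zH_YA)\,H_Y=zAF^\varphi_X H_Y.
\]
It follows that
\[
\varphi(\E^\psi_X[\bPsi])=(I-zAF^\varphi_XH_Y)^{-1}H_Y=\bigl(H_Y^{-1}-zAF^\varphi_X\bigr)^{-1}=(I-zBF_Y-zAF^\varphi_X)^{-1},
\]
which is \eqref{eq:AX+BY:phi(rEphiX[Psi])=MAFX}. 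The main point of care is this commutation step: keeping the noncommutative matrix order straight when passing from $AH_Y$ to $H_YA$. I expect it to hold at the level of formal power series in $z$, since all inverses are of the form $I+O(z)$.

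For the consequence \eqref{eq:AX+BY:rEphiX[Psi]=MAX+BY}, we substitute into \eqref{eq:AX+BY:rEphiX[Psi]=AFX}. By \eqref{eq:AX+BY:phi(Psi)} the first factor there is $M^\varphi_{AX+BY}(z)$. By the identity just proved the middle factor is $\varphi(\E^\psi_X[\bPsi])^{-1}$. By \eqref{eq:EpsiX[Psi]=(I-zAXpzBFY)} the last factor is $\E^\psi_X[\bPsi]$. The formula follows immediately.
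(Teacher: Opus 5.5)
Your strategy is the paper's own. You apply $\varphi$ to $\E^\psi_X[\bPsi]=(I-zAX-zBF_Y)^{-1}$, split off $H_Y$, and evaluate the one-variable resolvent through $M=(I-\eta)^{-1}$ at a matrix argument. You then recognize $F^\varphi_X$ by a push-through identity. The deduction of \eqref{eq:AX+BY:rEphiX[Psi]=MAX+BY} from \eqref{eq:AX+BY:rEphiX[Psi]=AFX} is also fine.

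However, two of your displayed equalities are false, exactly at the ordering issue you flagged.

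\textbf{First error: the factorization.} With $\tilde A=AH_Y$, the inverse of your right-hand side is
\[
(I-zBF_Y)(I-zAH_YX)=I-zBF_Y-z(I-zBF_Y)AH_YX .
\]
This equals $I-zAX-zBF_Y$ only when $A$ commutes with $BF_Y$. The correct factorization is $I-zAX-zBF_Y=(I-zAXH_Y)(I-zBF_Y)$. So $H_Y$ must sit on the left: $\E^\psi_X[\bPsi]=H_Y(I-zAH_YX)^{-1}$.

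\textbf{Second error: the final simplification.} In fact
\[
(I-zAF^\varphi_XH_Y)^{-1}H_Y=\bigl(H_Y^{-1}-H_Y^{-1}zAF^\varphi_XH_Y\bigr)^{-1},
\]
which is not $(H_Y^{-1}-zAF^\varphi_X)^{-1}$ in general. The valid identity is $H_Y(I-zAF^\varphi_XH_Y)^{-1}=\bigl((I-zAF^\varphi_XH_Y)H_Y^{-1}\bigr)^{-1}=(H_Y^{-1}-zAF^\varphi_X)^{-1}$.

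The two misplacements of $H_Y$ cancel, which is why you reach the correct formula. The chain as written is still not a valid derivation.

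\textbf{Repair.} There are two easy ways to fix it.
\begin{enumerate}
\item Keep $H_Y$ on the left throughout. This is exactly the paper's computation: $\varphi(\E^\psi_X[\bPsi])=H_Y M^\varphi_X(zAH_Y)=H_Y(I-zAF^\varphi_XH_Y)^{-1}$, followed by the identity above. Your push-through step $zAH_Y\,\tilde\eta^\varphi_X(zAH_Y)=zA\,\tilde\eta^\varphi_X(zH_YA)\,H_Y$ is correct and is what the paper uses.
\item Keep $H_Y$ on the right, but take $\tilde A=H_YA$. Then $\E^\psi_X[\bPsi]=(I-zH_YAX)^{-1}H_Y$ is correct, and applying $\varphi$ gives
\[
\bigl(I-zH_YA\,\tilde\eta^\varphi_X(zH_YA)\bigr)^{-1}H_Y=(I-zH_YAF^\varphi_X)^{-1}H_Y=(H_Y^{-1}-zAF^\varphi_X)^{-1},
\]
directly from \eqref{eq:Fxphi=betasubordination}. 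This version needs no conjugation step at all.
\end{enumerate}
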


\begin{proof}
  We use the identities in Proposition~\ref{lem:SeveralEquations} and Proposition~\ref{prop:6.13}. In particular, using \eqref{eq:EpsiX[Psi]=(I-zAXpzBFY)} and \eqref{eq:HYAdditive}, we have
  \begin{align*}
    \varphi(E^\psi_X[\bPsi])
    &= \varphi\bigl((I-zAX-zBF_Y)^{-1}\bigr)\\
    &= (I-zBF_Y)^{-1}\varphi\bigl((I-zAX(I-zBF_Y)^{-1})^{-1}\bigr)   \\
    &= H_Y\varphi((I-zAXH_Y)^{-1})   \\
    &= H_Y M^\varphi_X(zAH_Y).
  \end{align*}
  Now, using that $M_X^\varphi(zAH_Y) = (I-\eta_X^\varphi(zAH_Y))^{-1}$ and that 
  \[
    \eta^\varphi_X(zAH_Y)=zA\tilde{\eta}^\varphi_X(zH_YA)H_Y=zAF^\varphi_XH_Y,
  \]we obtain
  \begin{align*}
    \varphi(E^\psi_X[\bPsi])
    &=H_Y
      \bigl(
      I-\eta^\varphi_X(zAH_Y)
      \bigr)^{-1}   \\
    &= H_Y(I-zAF^\varphi_XH_Y)^{-1}
    \\ &= \bigl((I-zAF^\varphi_XH_Y)H_Y^{-1}\bigr)^{-1}
    \\ &= \bigl( H_Y^{-1} - zAF^\varphi_X\bigr)^{-1}
    \\&= 
    \bigl(    I-zAF^\varphi_X-zBF_Y    \bigr)^{-1},
  \end{align*}
  where we used \eqref{eq:HYAdditive} in the last equality.
\end{proof}

\section{$c$-free multiplicative convolution}
\label{sec:sigmatransform}

Our goal in this section is to show, as an application of the developments presented in this paper, how to compute the distribution of the product of two $c$-free random variables. Moreover, we also reproduce the multiplicativity of the \emph{Popa-Wang $\Sigma$-transform} from \cite{PopaWang:2011:multiplicative}.

\begin{proposition}
  Let $(\mathbb{C}\langle X,Y\rangle,\varphi,\psi)$ be a two-state noncommutative probability space such that $X$ and $Y$ are $c$-free. Then the moment generating function of $XY$ with respect to $\varphi$ is given by
  \begin{equation}
    M_{XY}^\varphi(z) = \frac{1}{
      1- z\tilde{\eta}^\varphi_X(\omega_X(z))\tilde{\eta}^\varphi_Y(\omega_Y(z))
    },
    \label{eq:XY:MphiXY}
  \end{equation}
  where $\omega_X(z)$ and $\omega_X(Y)$ are the subordination functions for the free multiplicative convolution of $X$ and $Y$ with respect to $\psi$. Furthermore, if we set $\Sigma^\varphi = \tilde\eta^\varphi\circ(\eta^\psi)^{-1}$, then
  \begin{equation}
    \label{eq:PopaWang}
    \Sigma^\varphi_{XY} = \Sigma^\varphi_X\cdot \Sigma^\varphi_Y.
  \end{equation}
\end{proposition}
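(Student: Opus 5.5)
We will derive \eqref{eq:XY:MphiXY} from Theorem~\ref{thm:cfreeConvolution} applied to a linearization of $XY$, and then obtain \eqref{eq:PopaWang} by eliminating the subordination functions using the analogous free ($\psi$) identities.

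\emph{Linearization.} The product $XY$ is homogeneous of degree $2$, and we take the $2\times 2$ linearization
\[
(1-z^2XY)^{-1}=\bigl[(I-z(AX+BY))^{-1}\bigr]_{1,1},\qquad A=\begin{bmatrix}0&1\\0&0\end{bmatrix},\quad B=\begin{bmatrix}0&0\\1&0\end{bmatrix}.
\]
Proposition~\ref{lem:SeveralEquations} gives the fixed point system for $H_X,H_Y,F_X,F_Y$. Because $A$ and $B$ are nilpotent with complementary supports, all these matrices should be diagonal: $F_X=\mathrm{diag}(f_1,f_2)$, $F_Y=\mathrm{diag}(g_1,g_2)$.

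\emph{Solving the system.} One checks $(I-zAF_X)^{-1}=I+zAF_X$, and similarly for $Y$. Consequently $zH_YA=zA+z^2BF_YA$ has the form $\begin{bmatrix}0&z\\0&0\end{bmatrix}$ plus a lower-left term. Applying $\tilde\eta_X^\psi$ to such a matrix turns the argument into products of the form $z^2 g\cdot$, through powers of $AB$ and $BA$. The upshot is a scalar system:
\begin{itemize}
\item $F_X$ is built from $\tilde\eta_X^\psi$ evaluated at $\omega_X(z^2)$, where $\omega_X$ is a scalar proportional to $z^2$ times an entry of $F_Y$;
\item $F_Y$ is built symmetrically from $\tilde\eta_Y^\psi$ at $\omega_Y(z^2)$.
\end{itemize}
The $\varphi$-versions $F_X^\varphi=\tilde\eta^\varphi_X(zH_YA)$ and $F_Y^\varphi=\tilde\eta^\varphi_Y(zH_XB)$ are given by the same arguments with $\tilde\eta^\varphi$ in place of $\tilde\eta^\psi$.

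\emph{Reading off the moment transform.} Plugging into \eqref{eq:AX+BY:phi(Psi)} and taking the $(1,1)$ entry of $(I-zAF_X^\varphi-zBF_Y^\varphi)^{-1}$, a $2\times 2$ inverse, gives
\[
M^\varphi_{XY}(z^2)=\frac{1}{1-z^2\,\tilde\eta^\varphi_X(\cdot)\,\tilde\eta^\varphi_Y(\cdot)}.
\]
We then identify the arguments with the subordination functions $\omega_X,\omega_Y$ of free multiplicative convolution. This is done by running the same computation with $\psi$, which yields $M^\psi_{XY}(z)=\bigl(1-z\tilde\eta_X^\psi(\omega_X)\tilde\eta^\psi_Y(\omega_Y)\bigr)^{-1}$ together with the relations $\omega_X(z)=z\,\tilde\eta^\psi_Y(\omega_Y(z))$ and $\omega_Y(z)=z\,\tilde\eta^\psi_X(\omega_X(z))$. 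The main obstacle is careful bookkeeping of which matrix entries survive, and matching the normalization of $\omega$ to the standard Boolean subordination convention. I would fix the normalization by checking the case $Y=1$.

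\emph{$\Sigma$-transform.} From the $\psi$-relations we get
\[
\eta^\psi_{XY}(z)=z\tilde\eta^\psi_X(\omega_X)\tilde\eta^\psi_Y(\omega_Y)=\omega_X\,\tilde\eta^\psi_X(\omega_X)=\eta^\psi_X(\omega_X(z)),
\]
and likewise $\eta^\psi_{XY}(z)=\eta^\psi_Y(\omega_Y(z))$. Setting $w=\eta^\psi_{XY}(z)$, this means $\omega_X=(\eta_X^\psi)^{-1}(w)$ and $\omega_Y=(\eta_Y^\psi)^{-1}(w)$. Then \eqref{eq:XY:MphiXY} gives
\[
\tilde\eta^\varphi_{XY}(z)=\tilde\eta^\varphi_X(\omega_X)\,\tilde\eta^\varphi_Y(\omega_Y),
\]
that is, $\Sigma^\varphi_{XY}(w)=\Sigma^\varphi_X(w)\Sigma^\varphi_Y(w)$, as formal power series. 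Throughout we assume the nondegeneracy needed for the compositional inverses, namely $\psi(X),\psi(Y)\neq 0$.
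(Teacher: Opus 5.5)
Your argument is correct in outline, but it takes a different route from the paper. The paper never linearizes $XY$. It works with the scalar resolvent $\bPsi=(1-zXY)^{-1}$ and the right quasi-conditional expectation of Section~\ref{sec:6}:
\begin{itemize}
\item the recurrences \eqref{eq:rEphiX[XW]} and \eqref{eq:rEphi[YW]:rec} express $\rE^\varphi_X[\bPsi]$ as an explicit scalar multiple of $E^\psi_X[\bPsi]=(1-\omega_X(z)X)^{-1}$;
\item the prefactors $\bbetas{\varphi}{X}(\bPsi X)$ and $\bbetas{\varphi}{Y}(Y\bPsi)$ are identified with $\tilde\eta^\varphi_X(\omega_X)$ and $\tilde\eta^\varphi_Y(\omega_Y)$ via \eqref{eq:ProductsasEntriesCFree};
\item one then applies $\varphi$;
\item for the $\Sigma$-transform it quotes $\eta^\psi_{XY}=\eta^\psi_X\circ\omega_X=\eta^\psi_Y\circ\omega_Y$ from Belinschi--Bercovici.
\end{itemize}
Your route uses only Proposition~\ref{lem:SeveralEquations} and Theorem~\ref{thm:cfreeConvolution}. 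It gets $\eta^\psi_{XY}=\eta^\psi_X\circ\omega_X$ internally by rerunning the computation with $\varphi=\psi$, so it is more self-contained. That relation already fixes the normalization of $\omega_X$, so no check with $Y=1$ is needed. The paper's route yields as a by-product the explicit formula for $\rE^\varphi_X[(1-zXY)^{-1}]$, which is the real purpose of that section. Your explicit hypothesis $\psi(X),\psi(Y)\neq0$ for the compositional inverses is left implicit in the paper.

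One step of your sketch is wrong as stated, though it turns out harmless: the matrices are not diagonal. Take $A=e_{12}$, $B=e_{21}$ (matrix units), and write $f=(F_X)_{22}$, $g=(F_Y)_{11}$. Then $F_Y$ is lower triangular, $H_Y=I+zg\,e_{21}$, and $zH_YA=z\,e_{12}+z^2g\,e_{22}$. The correction sits in the $(2,2)$ entry, not the lower-left one. Consequently $F_X=\tilde\eta^\psi_X(zH_YA)$ is upper triangular, with $(F_X)_{11}=\beta^\psi_1(X)$, $f=\tilde\eta^\psi_X(z^2g)$, and in general a nonzero entry $(F_X)_{12}=z\beta^\psi_2(X)+O(z^3)$. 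Symmetrically, $g=\tilde\eta^\psi_Y(z^2f)$.

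The off-diagonal entries drop out because $AF_X=f\,e_{12}$ and $BF_Y=g\,e_{21}$. The same holds for the $\varphi$-versions, since their arguments $zH_YA$ and $zH_XB$ are the same $\psi$-matrices. Hence
\[
I-zAF^\varphi_X-zBF^\varphi_Y=\begin{bmatrix}1&-z\tilde\eta^\varphi_X(z^2g)\\ -z\tilde\eta^\varphi_Y(z^2f)&1\end{bmatrix},
\]
whose inverse has $(1,1)$ entry $\bigl(1-z^2\tilde\eta^\varphi_X(z^2g)\tilde\eta^\varphi_Y(z^2f)\bigr)^{-1}$. In the variable $w=z^2$, with $\omega_X(w)=wg$ and $\omega_Y(w)=wf$, this is exactly \eqref{eq:XY:MphiXY}. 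With this correction your computation goes through.
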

\begin{proof}
  In order compute the moment generating function $M_{XY}^\varphi(z)$, we consider 
  \begin{equation}
    \label{eq:ResolventMult}
    \bPsi := (1-zXY)^{-1}
    = 1+zXY\bPsi
  \end{equation}
  so that $M_{XY}^\varphi(z) =\varphi(\rE_X^\varphi[\bPsi])$. It is easy to compute the following block derivations:
  \begin{align*}
    \rdelta_X\bPsi &= z\bPsi\otimes XY\bPsi &    \rdelta_Y\bPsi &= z\bPsi X\otimes Y\bPsi \\
                   &= \bPsi\otimes(\bPsi-1)
  \end{align*}
Using resolvent identity in \eqref{eq:ResolventMult}, we can resort to the simpler recurrence
  \eqref{eq:rEphiX[XW]} in Theorem~\ref{thm:rEphi:recurrence} and obtain
  \begin{align*}
    \rE^\varphi_X[\bPsi]
    &= 1 +   z \rE^\varphi_X[XY\bPsi]\\
    &= 1+zXE^\psi_X[Y\bPsi] +
      z\bbetas{\varphi}{X}\otimes(\rE^\varphi_X-E^\psi_X)\rdelta_Y[XY\bPsi].
  \end{align*}
  A short calculation reveals 
  \[
    \rdelta_Y [XY\bPsi] = \bPsi
    X\otimes Y\bPsi
  \] 
  and thus
  \begin{equation}
    \label{eq:rEMult}
    \rE^\varphi_X[\bPsi]  = E^\psi_X[\bPsi]
    +
    z\bbetas{\varphi}{X}(\bPsi X)
    \bigl(
    \rE^\varphi_X[Y\bPsi] - E^\psi_X[Y\bPsi]
    \bigr)
  \end{equation}
The recurrence
  \eqref{eq:rEphi[YW]:rec}
  for $\rE^\varphi_X[Y\bPsi]$ in Theorem~\ref{thm:rEphi:recurrence} is identical to the one \eqref{eq:condexprec}  for  $E^\psi_X[Y\bPsi]$. This means:
  \begin{align}
    \rE^\varphi_X[Y\bPsi]\nonumber
    &= \bbetas{\varphi}{Y}(Y\bPsi) + z\bbetas{\varphi}{Y}(Y\bPsi)\rE^\varphi_X[XY\bPsi]\\
    &= \bbetas{\varphi}{Y}(Y\bPsi)\rE^\varphi_X[\bPsi]
      \label{eq:XY:EvarphiX[YPsi]}
  \end{align}
and
  \begin{equation}
    \label{eq:XY:EpsiX[YPsi]}
    E^\psi_X[Y\bPsi] = \bbetas{\psi}{Y}(Y\bPsi)E^\psi_X[\bPsi]
    .
  \end{equation}
  Therefore, substituting \eqref{eq:XY:EvarphiX[YPsi]} and \eqref{eq:XY:EpsiX[YPsi]} in \eqref{eq:rEMult}, we obtain
  \begin{equation*}
    \rE^\varphi_X[\bPsi]
    = E^\psi_X[\bPsi]
    +
    z\bbetas{\varphi}{X}(\bPsi X)
    \bigl(
    \bbetas{\varphi}{Y}(Y\bPsi)\rE^\varphi_X[\bPsi]
    -
    \bbetas{\psi}{Y}(Y\bPsi)E^\psi_X[\bPsi]
    \bigr),
  \end{equation*}
  which implies
  \begin{equation}
    \label{eq:XY:rEphiX[Psi]}
    \rE^\varphi_X[\bPsi]
    =
    \frac{
      1-z\bbetas{\varphi}{X}(\bPsi X)     \bbetas{\psi}{Y}(Y\bPsi)
    }{
      1-z\bbetas{\varphi}{X}(\bPsi X)     \bbetas{\varphi}{Y}(Y\bPsi)
    }
    E^\psi_X[\bPsi]
  \end{equation}
Now, we look at the factor $\E_X^\psi[\bPsi]$ in the right-hand side of the above equation. The free recurrence \eqref{eq:condexprec} together
  with   \eqref{eq:XY:EpsiX[YPsi]} yields
  \begin{align}
    E^\psi_X[\bPsi]
    &= 1 + zXE^\psi_X[Y\bPsi]\nonumber\\
    &= 1 + zX\bbetas{\psi}{Y}(Y\bPsi)E^\psi_X[\bPsi]\nonumber
      \intertext{and solving}
      E^\psi_X[\bPsi]    
    &=
      \bigl(
      1 - z\bbetas{\psi}{Y}(Y\bPsi)X
      \bigr)^{-1}.\label{eq:EXpsiPsiMult}
  \end{align}
It follows that
  \begin{equation}
    \label{eq:XY:omegaX}
    \omega_X(z) = z\bbetas{\psi}{Y}(Y\bPsi)
  \end{equation}
  is the \emph{subordination function}, i.e.,~$\omega_X(z)$ satisfies
  \[
    M_{XY}^\psi(z) = M_X^\psi(\omega_X(z)).
  \]
  Analogously, we have that
  \begin{equation}
    E^\psi_Y[\bPsi] =
    \bigl(
    1-z\bbetas{\psi}{X}(\bPsi X)Y
    \bigr)^{-1}
  \end{equation}
  and
  \begin{equation}
    \label{eq:XY:omegaY}
    \omega_Y(z) = z\bbetas{\psi}{X}(\bPsi X).
  \end{equation}
  
Our next step is to deduce a system of equations for the subordination functions $\omega_X(z)$ and $\omega_Y(z)$. First, observe that all terms in the expansion $\bPsi(z) = \sum_{n=0}^{\infty} (XY)^n z^n$ are alternating words, with each block of letters in the alternation having length one. In particular, this implies that $\beta_Y^{b,\psi}(Y\bPsi)=\beta_Y^{\delta,\psi}(Y\bPsi)$. Hence
  \[
    \omega_X(z) = z\beta_Y^{b,\psi}(Y\bPsi)= z\beta_Y^{\delta,\psi}(Y\bPsi).
  \]
  Now, since $\partial_Y(\bPsi) =z\bPsi X\otimes \bPsi$ and, we can show by induction that
  \[
    \partial_Y^n(Y\bPsi) = z^{n-1}1\otimes (\bPsi  X)^{\otimes (n-1)}\otimes \bPsi + z^nY\bPsi X \otimes (\bPsi X)^{\otimes (n-1)}\otimes \bPsi.
  \]
  Then, by \eqref{eq:IntroVNRP_for_betas:uni} we obtain
  \begin{align*}
    \omega_X(z)
    &= z\fbetas{\psi}{Y}(Y\bPsi)\\
    &= z\tilde{\eta}^\psi_Y(z\bbetas{\psi}{X}(\bPsi X))\\
    &= z\tilde{\eta}^\psi_Y(\omega_Y(z)).
      \intertext{Analogoulsy, we have}
      \omega_Y(z)
    &= z\tilde{\eta}^\psi_X(\omega_X(z)).
  \end{align*}
  A similar argument for $\bbetas{\varphi}{X}(\bPsi X)$ and $\bbetas{\varphi}{Y}(Y\bPsi)$ but now using \eqref{eq:ProductsasEntriesCFree} yields 
  \begin{align*}
    \bbetas{\varphi}{X}(\bPsi X)
    &=   \fbetas{\varphi}{X}(\bPsi X)\\
    &= \tilde{\eta}^\varphi_X(z\beta^ {b,\psi}_{Y}(Y\bPsi))\\
    &= \tilde{\eta}^\varphi_X(\omega_X(z)),
      \intertext{and}
      \bbetas{\varphi}{Y}(Y\bPsi)
    &=   \fbetas{\varphi}{Y}(Y\bPsi)\\
    &= \tilde{\eta}^\varphi_Y(z\beta^{b,\psi}_{X}(\bPsi X))\\
    &= \tilde{\eta}^\varphi_Y(\omega_Y(z)).
  \end{align*}
Finally, we can plug the above identities into   \eqref{eq:XY:rEphiX[Psi]} in order to obtain
  \begin{align*}
    M_{XY}^\varphi(z)
    &= \varphi(\rE^\varphi_X[\bPsi])\\
    &=   \frac{
      1-z\bbetas{\varphi}{X}(\bPsi X)     \bbetas{\psi}{Y}(Y\bPsi)
      }{
      1-z\bbetas{\varphi}{X}(\bPsi X)     \bbetas{\varphi}{Y}(Y\bPsi)
      }
      \varphi(E_X^\psi[\bPsi])\\
    &=   \frac{
      1-z\bbetas{\varphi}{X}(\bPsi X)     \bbetas{\psi}{Y}(Y\bPsi)
      }{
      1-z\bbetas{\varphi}{X}(\bPsi X)     \bbetas{\varphi}{Y}(Y\bPsi)
      }
      M_X^\varphi(z\bbetas{\psi}{Y}(Y\bPsi))\\
    &= \frac{1-z\tilde{\eta}^\varphi_X(\omega_X(z))\frac{\omega_X(z)}{z}
      }{
      1- z\tilde{\eta}^\varphi_X(\omega_X(z))\tilde{\eta}^\varphi_Y(\omega_Y(z))
      }
      M^\varphi_X(\omega_X(z)).
  \end{align*}
  Recalling that $M_X^\varphi(s) = (1 - \eta^\varphi_X(s))^{-1}$ and $\eta_X^\varphi(s) = s\tilde\eta_X^\varphi(s)$. By taking $s=\omega_X(z)$, we conclude \eqref{eq:XY:MphiXY}:
  \begin{equation*}
    M_{XY}^\varphi(z) = \frac{1}{
      1- z\tilde{\eta}^\varphi_X(\omega_X(z))\tilde{\eta}^\varphi_Y(\omega_Y(z))
    }.
  \end{equation*}
Now we prove \eqref{eq:PopaWang}. According to \cite[Theorem 3.2]{BelBerNewApproach}, it is known that
  \begin{align*}
    \eta^\psi_{XY}(z)
    &= \eta^\psi_{X}(\omega_X(z))\\
    &= \eta^\psi_{Y}(\omega_Y(z)).
  \end{align*}
  If we set $u=\eta^\psi_{XY}(z)$, then
  \begin{align*}
    \omega_X(z)
    &= (\eta^{\psi}_X)^{-1}(u)\\
    \omega_Y(z)
    &= (\eta^{\psi}_Y)^{-1}(u).
      \intertext{Also, we have}
      \eta^\psi_{XY}(z)
    &= \tilde{\eta}^\psi_X(\omega_X(z))\omega_X(z)\\
    &= \tilde{\eta}^\psi_X(\omega_X(z))z \tilde{\eta}^\psi_Y(\omega_Y(z)).
      \intertext{Hence}
      \tilde{\eta}^\psi_{XY}(z)
    &= \tilde{\eta}^\psi_X(\omega_X(z)) \tilde{\eta}^\psi_Y(\omega_Y(z))
      \intertext{so that}
      \tilde{\eta}^\psi_{XY}\circ(\eta^\psi_{XY})^{-1}(u)
    &= \tilde{\eta}^\psi_X\circ(\eta^\psi_{X})^{-1}(u)
      \cdot
      \tilde{\eta}^\psi_Y\circ(\eta^\psi_{Y})^{-1}(u).
  \end{align*}
On the other hand, from \eqref{eq:XY:MphiXY} we infer that the $\tilde\eta$-transform of $XY$ with respect to $\varphi$ is given by
  \[
    \tilde{\eta}^\varphi_{XY}(z)
    = \tilde{\eta}^\varphi_X(\omega_X(z))\tilde{\eta}^\varphi_Y(\omega_Y(z)).
  \]
  By setting again $u = \eta_{XY}^\psi(z)$, we conclude that
  \begin{equation*}
    \tilde{\eta}^\varphi_{XY}\circ(\eta^\psi_{XY})^{-1}(u)
    =
    \tilde{\eta}^\varphi_{X}\circ(\eta^\psi_{X})^{-1}(u)
    \cdot
    \tilde{\eta}^\varphi_{Y}\circ(\eta^\psi_{Y})^{-1}(u)    
  \end{equation*}
  which reproduces \cite[Lemma~2.1]{PopaWang:2011:multiplicative} upon setting $\Sigma^\varphi = \tilde{\eta}^\varphi \circ (\eta^\psi)^{-1}$, and is precisely \eqref{eq:PopaWang}.
\end{proof}

\bibliographystyle{abbrv}

\end{document}